\def\thm@space@setup{
  \thm@preskip=10pt \thm@postskip=10pt
}
\theoremstyle{plain}
\newtheorem{theorem}{Theorem}
\theoremstyle{plain}
\newtheorem{lemma}[theorem]{Lemma}
\theoremstyle{definition}
\newtheorem{definition}[theorem]{Definition}
\theoremstyle{definition}
\theoremstyle{remark}
\newtheorem*{remark}{Remark}
\theoremstyle{remark}
\definecolor{dkgreen}{rgb}{0,0.6,0}
\definecolor{gray}{rgb}{0.5,0.5,0.5}
\definecolor{mauve}{rgb}{0.58,0,0.82}
\newcommand{\abs}[1]{\left\lvert#1\right\lvert}
\newcommand{\norm}[1]{\left\lVert#1\right\rVert}
\DeclareMathOperator*{\argmin}{arg\,min}
\newcommand{\email}[1]{\protect\href{mailto:#1}{#1}}
\colorlet{inlinkcolor}{green!50!black}
\colorlet{exlinkcolor}{red!50!black}
\newenvironment{@abssec}[1]{
        \vspace{.05in}\parindent .0in
        {\upshape\bfseries #1. }\ignorespaces
    }
    {\par\vspace{.1in}}
\renewenvironment{abstract}{\begin{@abssec}{\abstractname}}{\end{@abssec}}
\newenvironment{keywords}{\begin{@abssec}{Keywords}}{\end{@abssec}}
\author{
  {\normalsize Qinmeng Zou}\thanks{CentraleSup\'elec, Universit\'e Paris-Saclay, 3 rue Joliot Curie, 91190 Gif-sur-Yvette, France
    (\email{zouqinmeng@gmail.com}, \email{frederic.magoules@hotmail.com}).}
  \and
  {\normalsize Fr\'ed\'eric Magoul\`es\footnotemark[1]}
}
\title{Fast Gradient Methods with Alignment for Symmetric Linear Systems without Using Cauchy Step}
\date{}
\begin{document}
\maketitle
\thispagestyle{fancy}

\begin{abstract}
The performance of gradient methods has been considerably improved by the introduction of delayed parameters.
After two and a half decades, the revealing of second-order information has recently given rise to the Cauchy-based methods with alignment, which reduce asymptotically the search spaces in smaller and smaller dimensions.
They are generally considered as the state of the art of gradient methods.
This paper reveals the spectral properties of minimal gradient and asymptotically optimal steps, and then suggests three fast methods with alignment without using the Cauchy step.
The convergence results are provided, and numerical experiments show that the new methods provide competitive and more stable alternatives to the classical Cauchy-based methods.
In particular, alignment gradient methods present advantages over the Krylov subspace methods in some situations, which makes them attractive in practice.
\end{abstract}

\begin{keywords}
gradient methods with alignment; Cauchy step; minimal gradient; asymptotically optimal; spectral analysis; linear systems.
\end{keywords}

\section{Introduction}
\label{sec:1}

Consider the linear system
\begin{equation}
\label{eq:ls}
Ax=b,
\end{equation}
where $A\in\mathbb{R}^{N\times N}$ is symmetric positive definite (SPD) and $b\in\mathbb{R}^N$.
The solution $x_*$ is the unique global minimizer of strictly convex quadratic function
\begin{equation}
\label{eq:quad}
f(x) = \frac{1}{2}x^\intercal Ax - b^\intercal x.
\end{equation}
The gradient method is of the form
\begin{equation}
\label{eq:x}
x_{n+1} = x_n - \alpha_n g_n,\quad n = 0,\,1,\,\dots,
\end{equation}
where $g_n =  \nabla f(x_n) = Ax_n - b$.
The steepest descent (SD) method, originally proposed in~\cite{Cauchy1847}, defined the steplength by the reciprocal of a Rayleigh quotient of Hessian matrix $A$
\begin{equation}
\label{eq:sd}
\alpha_n^\text{SD} = \frac{g_n^\intercal g_n}{g_n^\intercal Ag_n},
\end{equation}
which is also called Cauchy steplength.
It minimizes the function $f$ or the $A$-norm error and gives theoretically an optimal result in each step
\[
\alpha_n^\text{SD} = \argmin_{\alpha}f(x_n-\alpha g_n) = \argmin_{\alpha}\norm{(I-\alpha A)e_n}_A^2,
\]
where $e_n = x_* - x_n$.
This classical method is known to behave badly in practice.
The directions generated tend to asymptotically alternate between two orthogonal directions leading to a slow convergence~\cite{Akaike1959}.

The first gradient method with retards is the Barzilai-Borwein (BB) method that was originally proposed in \cite{Barzilai1988}.
The BB method is of the form
\[
\alpha_n^\text{BB} = \frac{g_{n-1}^\intercal g_{n-1}}{g_{n-1}^\intercal Ag_{n-1}},
\]
which remedies the convergence issue for ill-conditioned problems by using nonmonotone steplength.
The motivation arose in providing a two-point approximation to the quasi-Newton methods, namely
\[
\alpha_n^\text{BB} = \argmin_{\alpha} \norm{\frac{1}{\alpha}\Delta x-\Delta g}^2,
\]
where $\Delta x=x_n-x_{n-1}$ and $\Delta g=g_n-g_{n-1}$.
Notice that $\alpha_n^\text{BB} = \alpha_{n-1}^\text{SD}$.
There exists a similar method developed by symmetry in~\cite{Barzilai1988}
\[
\alpha_n^\text{BB2} = \frac{g_{n-1}^\intercal Ag_{n-1}}{g_{n-1}^\intercal A^2g_{n-1}},
\]
which imposes as well a quasi-Newton property
\[
\alpha_n^\text{BB2} = \argmin_{\alpha} \norm{\Delta x-\alpha\Delta g}^2.
\]
We remark that $\alpha_n^\text{BB2} = \alpha_{n-1}^\text{MG}$, see Section~\ref{sec:2}.
Practical experience is generally in favor of BB.
The convergence analysis of these methods was given in~\cite{Raydan1993} and~\cite{Dai2002}.
The preconditioned version was established in~\cite{Molina1996}.
A more recent chapter by~\cite{Fletcher2005} discussed the efficiency of BB.
In the years that followed numerous generalizations have appeared, such as alternate methods~\cite{Dai2003,Dai2003b}, cyclic methods~\cite{Friedlander1999,Dai2003,Dai2006b}, adaptive methods~\cite{Zhou2006,Frassoldati2008}, and some general frameworks~\cite{Friedlander1999,Dai2003,Yuan2010}.

There exist several auxiliary steplengths acting as accelerators of other methods.
More precisely, performing occasionally the auxiliary iterative steps could often improve the global convergence.
For example, in order to find the unique minimizer in finitely many iterations in $2$-dimensions, \cite{Yuan2006} proposed a ingenious steplength as follows
\[
\alpha_n^\text{Y} = 2\left(\sqrt{\left(\frac{1}{\alpha_{n-1}^\text{SD}} - \frac{1}{\alpha_n^\text{SD}}\right)^2 + \frac{4\norm{g_n}^2}{\left(\alpha_{n-1}^\text{SD}\right)^2 \norm{g_{n-1}}^2}} + \frac{1}{\alpha_{n-1}^\text{SD}} + \frac{1}{\alpha_n^\text{SD}}\right)^{-1},
\]
which is called Yuan steplength.
Recently, \cite{DeAsmundis2013} proposed a new gradient method that exploits also the spectral properties of SD.
The improvement resorts to a special steplength
\[
\alpha_n^\text{A} = \left(\frac{1}{\alpha_{n-1}^\text{SD}} + \frac{1}{\alpha_n^\text{SD}}\right)^{-1}.
\]
In one direction, these steplengths give rise to some efficient gradient methods.
For example, \cite{Dai2005c} provided several alternate steps, in which we mention here the second variant
\[
\alpha_n^\text{DY} =
\begin{cases}
\alpha_n^\text{SD}, & n\bmod 4 = 0\text{ or }1, \\[2pt]
\alpha_n^\text{Y}, & \text{otherwise},
\end{cases}
\]
which seems to be the most promising variant according to the experiments.
As usual, it does not have a specific name.
Here we call it Dai-Yuan (DY) method \cite{Frassoldati2008}.
A closer examination of Yuan variants revealed that they have a distinguish property called ``decreasing together''~\cite{Dai2005c}.
It means that DY does not sink into any lower subspace spanned by eigenvectors.
Experiments have shown that BB has also such feature.
Important differences come from the fact that BB is a nonmonotone steplength, whereas DY is monotone thus being more stable.

On the other hand, the auxiliary steps lead to gradient methods with alignment such as
\[
\alpha_n^\text{SDA} =
\begin{cases}
\alpha_n^\text{SD}, & n\bmod (d_1+d_2) < d_1, \\[2pt]
\alpha_n^\text{A}, & n\bmod (d_1+d_2) = d_1, \\[2pt]
\alpha_{n-1}^\text{SDA}, & \text{otherwise},
\end{cases}
\]
with $d_1,\,d_2 \ge 1$.
This method is called steepest descent with alignment (SDA).
Here, we choose the version described in~\cite{DeAsmundis2016} without using the switch condition illustrated in~\cite{DeAsmundis2013}, and vary the form while leaving the alignment property unchanged.
Shortly after, they presented another similar method based on Yuan steplength \cite{DeAsmundis2014}, called steepest descent with constant steplength (SDC) which is of the form
\[
\alpha_n^\text{SDC} =
\begin{cases}
\alpha_n^\text{SD}, & n\bmod (d_1+d_2) < d_1, \\[2pt]
\alpha_n^\text{Y}, & n\bmod (d_1+d_2) = d_1, \\[2pt]
\alpha_{n-1}^\text{SDC}, & \text{otherwise},
\end{cases}
\]
with $d_1,\,d_2 \ge 1$.
The main feature of this method is to foster the reduction of gradient components along the eigenvectors of $A$ selectively, and reduce the search space into smaller and smaller dimensions.
The problem tends to have a better and better condition number~\cite{DeAsmundis2014}.
We note that the motivations of SDA and SDC are different according to~\cite{DeAsmundis2013} and~\cite{DeAsmundis2014}.
Since their derivations both involve spectral analysis of Cauchy step, we define here that both of them are regarded as alignment methods.
These two steps seem to be the state of the art of gradient methods and tend to give the best performance among all of these.
Recently, \cite{Gonzaga2016} introduced a general framework of Cauchy steplength with alignment, which breaks the Cauchy cycle by periodically applying some short steplengths.

Despite the good practical performance of alignment methods, all promising formulations are based on the Cauchy steplength in order to ensure the alignment feature.
It is convenient to relax such restriction and jump out of the framework.
In this paper, we address this issue and investigate some gradient methods with the alignment property without Cauchy steplength.
In Section~\ref{sec:2}, we analyze the spectral properties of minimal gradient step.
In Section~\ref{sec:3}, we introduce some new gradient methods by virtue of the basic steplengths and discuss their alignment property.
In Section~\ref{sec:4}, we focus on the convergence analysis of the new methods.
A set of numerical experiments is illustrated in Section~\ref{sec:5} and concluding remarks are drawn in Section~\ref{sec:6}.

\section{Spectral analysis of minimal gradient}
\label{sec:2}

The minimal gradient (MG) method was proposed in \cite{Krasnoselskii1952} which is of the form
\[
\alpha_n^\text{MG} = \frac{g_n^\intercal Ag_n}{g_n^\intercal A^2g_n}.
\]
It minimizes the $2$-norm gradient value
\[
\alpha_n^\text{MG} = \argmin_{\alpha}\norm{g_n - \alpha Ag_n}^2,
\]
where $\norm{\cdot}$ denotes the Euclidean norm of a vector.
Traditionally it does not have a specific name.
From~\cite{Kozjakin1982} we know that it was originally called ``minimal residues''.
However, this term might cause confusion since there exists a Krylov subspace method called MINRES~\cite{Paige1975} which minimizes the norm of the residual through the Lanczos process.
On the other hand, MG is also a special case of the Orthomin($k$) method when $k=1$~\cite{Greenbaum1997}, and thus sometimes called OM~\cite{Ascher2009,vandenDoel2012}.
Here, the name ``minimal gradient'' comes from~\cite{Dai2003b} since it gives an optimal gradient result in each step.

We can assume without loss of generality that
\[
0<\lambda_1\le\dots\le\lambda_N,
\]
where $\{\lambda_1,\,\dots,\,\lambda_N\}$ is the set of eigenvalues of $A$, and $\{v_1,\,\dots,\,v_N\}$ is the set of associated eigenvectors.
Let $\kappa$ be the condition number of $A$ such that
\begin{equation}
\label{eq:kappa}
\kappa = \frac{\lambda_N}{\lambda_1}.
\end{equation}
From \eqref{eq:x} we can deduce that
\begin{equation}
\label{eq:grd}
g_{n+1} = (I-\alpha_n A)g_n.
\end{equation}
There exist real numbers $\zeta_{i,n}$ such that
\begin{equation}
\label{eq:grd2}
g_n = \sum_{i=1}^N \zeta_{i,n}v_i.
\end{equation}
Then, substituting~\eqref{eq:grd2} into~\eqref{eq:grd} implies
\[
\zeta_{i,n+1} = (1-\alpha_n\lambda_i)\zeta_{i,n}.
\]

We know from~\cite{Akaike1959} that the SD method is asymptotically reduced to a search in the $2$-dimensional subspace generated by the two eigenvectors corresponding to the largest and the smallest
eigenvalues of $A$.
Eventually the directions generated tend to zigzag in two orthogonal directions that gives rise to a slow convergence rate.
Such argument was demonstrated by using the following lemma, see~\cite{Akaike1959} and~\cite{Forsythe1968} for more details.
\begin{lemma}
\label{thm:Akaike1959:1,2}
Let $p_0$ be a probability measure attached to $\{\lambda_1,\,\dots,\,\lambda_N\}$ where $p_{i,0}=p_0(\lambda_i)$ and $0<\lambda_1<\dots<\lambda_N$.
Consider a transformation such that
\[
p_{i,n+1} = \frac{\left(\sum_{j=1}^N\lambda_jp_{j,n} - \lambda_i\right)^2}{\sum_{l=1}^N\left(\sum_{j=1}^N\lambda_jp_{j,n} - \lambda_l\right)^2 p_{l,n}}p_{i,n}.
\]
Then,
\[
\lim_{n\rightarrow\infty}p_{i,2n} =
\begin{cases}
p_*, & i=1, \\[2pt]
0, & i\in\{2,\,\dots,\,N-1\}, \\[2pt]
1-p_*, & i=N, \\
\end{cases}
\]
and
\[
\lim_{n\rightarrow\infty}p_{i,2n+1} =
\begin{cases}
1-p_*, & i=1, \\[2pt]
0, & i\in\{2,\,\dots,\,N-1\}, \\[2pt]
p_*, & i=N, \\
\end{cases}
\]
for some $p_*\in(0,\,1)$.
\end{lemma}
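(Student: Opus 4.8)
The plan is to read the stated recursion as exactly the dynamics of the normalized squared spectral components of the steepest descent iterates, and then to run the classical Akaike argument on it. Indeed, if one sets $p_{i,n}=\zeta_{i,n}^{2}/\sum_{j}\zeta_{j,n}^{2}$ with the $\zeta_{i,n}$ of \eqref{eq:grd2}, then $\alpha_n^{\text{SD}}=1/\mu_n$ with $\mu_n:=\sum_j\lambda_jp_{j,n}$, and the component update $\zeta_{i,n+1}=(1-\alpha_n\lambda_i)\zeta_{i,n}$ gives, once the common factor $1/\mu_n^{2}$ cancels from numerator and denominator,
\[
p_{i,n+1}=\frac{(\mu_n-\lambda_i)^{2}\,p_{i,n}}{\sum_{l}(\mu_n-\lambda_l)^{2}\,p_{l,n}},
\]
which is precisely the transformation of the lemma. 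So I may regard $p_n$ as a probability vector on $\{\lambda_1,\dots,\lambda_N\}$ that is reweighted at each step by $(\lambda_i-\mu_n)^{2}/\sigma_n^{2}$ with $\sigma_n^{2}=\sum_l(\lambda_l-\mu_n)^{2}p_{l,n}$; that is, mass is repelled from the current weighted mean $\mu_n\in(\lambda_1,\lambda_N)$. Implicitly one assumes $p_{1,0}>0$ and $p_{N,0}>0$ (otherwise the asserted limits cannot hold), and these survive every step since $p_{1,n+1}>0$ whenever $p_{1,n}>0$, $\mu_n\neq\lambda_1$ and $\sigma_n^{2}>0$.

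First I would settle the two-point case $N=2$, which is the engine of the whole phenomenon. Writing $x_n=p_{1,n}$, one has $\lambda_1-\mu_n=-(\lambda_2-\lambda_1)(1-x_n)$ and $\lambda_2-\mu_n=(\lambda_2-\lambda_1)x_n$, and after cancellation the reweighting collapses to the exact involution $x_{n+1}=1-x_n$. Hence in two dimensions the sequence is literally $2$-periodic, $p_{1,2n}=p_{1,0}=:p_*$ and $p_{1,2n+1}=1-p_*$, with $p_*\in(0,1)$; this is the asymptotic zigzag, already of the claimed form.

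Second, and this is the crux, I would show that the interior components die, $p_{i,n}\to 0$ for $2\le i\le N-1$, with a geometric rate. The natural tool is the pair of ratios $r_{i,n}=p_{i,n}/p_{1,n}$ and $s_{i,n}=p_{i,n}/p_{N,n}$, which obey $r_{i,n+1}=\bigl((\lambda_i-\mu_n)/(\lambda_1-\mu_n)\bigr)^{2}r_{i,n}$ and likewise for $s$. Since $\mu_n$ lies strictly between $\lambda_1$ and $\lambda_N$, at each step either $\mu_n\ge\lambda_i$, and then a one-variable monotonicity check gives $(\lambda_i-\mu_n)^2/(\lambda_1-\mu_n)^2\le\bigl((\lambda_N-\lambda_i)/(\lambda_N-\lambda_1)\bigr)^2$, so $r_{i,n}$ contracts by a factor $\le\rho_i<1$; or $\mu_n<\lambda_i$, and then $(\lambda_i-\mu_n)^2/(\lambda_N-\mu_n)^2\le\bigl((\lambda_i-\lambda_1)/(\lambda_N-\lambda_1)\bigr)^2\le\rho_i$, so $s_{i,n}$ contracts. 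The only obstruction is that the complementary ratio may grow, which happens exactly when $\mu_n$ hugs an endpoint; to tame that regime I would use the elementary estimates
\[
\mu_n-\lambda_1\ \ge\ (\lambda_2-\lambda_1)\!\!\sum_{l=2}^{N}p_{l,n},\qquad \lambda_N-\mu_n\ \ge\ (\lambda_N-\lambda_{N-1})\!\!\sum_{l=1}^{N-1}p_{l,n},
\]
which say that when $\mu_n$ is within $\varepsilon$ of an extreme eigenvalue the total interior mass is $O(\varepsilon)$. Feeding this back into the recursion shows that any transient inflation of the interior mass while $\mu_n$ brushes an endpoint is more than recovered over the next few steps, during which $\mu_n$ returns to the bulk of the spectrum and $\sigma_n^{2}$ is bounded below; summing over one full oscillation of $\mu_n$ then yields a uniform contraction of $u_n:=\sum_{i=2}^{N-1}p_{i,n}$, hence $u_n\to 0$ geometrically. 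I expect this to be the main difficulty: the individual inequalities are routine, but arranging them so that $u_n$ is seen to decrease \emph{net} over an oscillation, rather than only intermittently, is the delicate bookkeeping carried out in \cite{Akaike1959} and \cite{Forsythe1968}.

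Finally I would pass to the limit. Put $w_n=p_{1,n}+p_{N,n}=1-u_n\to1$ and $y_n=p_{1,n}/w_n$. Because $\mu_n=\lambda_1y_n+\lambda_N(1-y_n)+O(u_n)$, the two-point computation above yields $y_{n+1}=1-y_n+O(u_n)$, whence $y_{n+2}=y_n+O(u_n+u_{n+1})$; as $\sum u_n<\infty$, the even- and odd-indexed subsequences of $(y_n)$ are Cauchy, so $y_{2n}\to p_*$ and $y_{2n+1}\to 1-p_*$ for some $p_*\in[0,1]$, and $p_*\in(0,1)$ because the asymptotic zigzag prevents $y_n$ from settling at $0$ or $1$ (e.g.\ $y_n(1-y_n)$ has summable increments and stays bounded away from $0$). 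Then $p_{1,2n}=y_{2n}w_{2n}\to p_*$ and $p_{N,2n}=(1-y_{2n})w_{2n}\to1-p_*$, these two values exchange along the odd subsequence, and $p_{i,2n},p_{i,2n+1}\to0$ for every interior $i$, which is exactly the assertion.
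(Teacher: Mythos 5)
The paper does not prove this lemma at all: it is quoted from Akaike (1959) and Forsythe (1968) and used as a black box (the surrounding text says only ``see \cite{Akaike1959} and \cite{Forsythe1968} for more details''), so there is no in-paper argument to compare yours against. Judged on its own terms, your proposal sets up the right framework: the identification of the recursion with the normalized squared spectral components of steepest descent, the exact involution $x_{n+1}=1-x_n$ in the two-point case, and the one-step contraction of one of the two ratios $r_{i,n}=p_{i,n}/p_{1,n}$, $s_{i,n}=p_{i,n}/p_{N,n}$ all check out.

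The genuine gap is the step you yourself flag: the claim that the interior mass $u_n=\sum_{i=2}^{N-1}p_{i,n}$ contracts \emph{net} over one oscillation of $\mu_n$, and geometrically at that. At each step only one of $r_{i,n}$, $s_{i,n}$ contracts, while the other can be inflated by a factor such as $(\lambda_i-\mu_n)^2/(\lambda_N-\mu_n)^2$, which is unbounded as $\mu_n\to\lambda_N$; the product $r_{i,n}s_{i,n}$ is likewise not uniformly contracted, since its one-step factor $\bigl((\lambda_i-\mu_n)^2/((\mu_n-\lambda_1)(\lambda_N-\mu_n))\bigr)^2$ blows up at both endpoints. Your endpoint estimates $\mu_n-\lambda_1\ge(\lambda_2-\lambda_1)\sum_{l\ge2}p_{l,n}$ correctly bound the non-dominant mass when $\mu_n$ hugs an endpoint, but they do not by themselves control how long $\mu_n$ lingers there nor quantify the subsequent recovery; ``feeding this back into the recursion'' is precisely the content of Akaike's theorem, and you explicitly defer that bookkeeping to the references. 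So the proposal is an accurate road map rather than a proof. A secondary weak point: in the final paragraph, the observation that $y_n(1-y_n)$ has summable increments only shows that it converges, not that its limit is positive, so $p_*\in(0,1)$ is not actually established; one needs a quantitative lower bound keeping $p_{1,n}p_{N,n}$ away from zero (for instance by making the summable perturbation small relative to $y_0(1-y_0)$ after a finite burn-in) to rule out $p_*\in\{0,1\}$.
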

We now give our main result on the spectral properties of MG.
These arguments lead to the gradient methods with alignment which shall be described in Section~\ref{sec:3}.
\begin{theorem}
\label{thm:1}
Consider the linear system $Ax=b$ where $A\in\mathbb{R}^{N\times N}$ is SPD and $b\in\mathbb{R}^N$.
Assume that the sequence of solution vectors $\{x_n\}$ is generated by the MG method.
If $0<\lambda_1<\dots<\lambda_N$ and the starting point $x_0$ is such that $\zeta_{1,0} \ne 0$ and $\zeta_{N,0} \ne 0$, then for some constant $c$, the following results hold
\begin{enumerate}[label=(\alph*)]
\item
\begin{equation}
\label{eq:1:a1}
\lim_{n\rightarrow\infty}\frac{\lambda_i\zeta_{i,2n}^2}{\sum_{j=1}^N\lambda_j\zeta_{j,2n}^2} =
\begin{cases}
\frac{1}{1+c^2}, & i=1, \\[2pt]
0, & i\in\{2,\,\dots,\,N-1\}, \\[2pt]
\frac{c^2}{1+c^2}, & i=N,
\end{cases}
\end{equation}
\begin{equation}
\label{eq:1:a2}
\lim_{n\rightarrow\infty}\frac{\lambda_i\zeta_{i,2n+1}^2}{\sum_{j=1}^N\lambda_j\zeta_{j,2n+1}^2} =
\begin{cases}
\frac{c^2}{1+c^2}, & i=1, \\[2pt]
0, & i\in\{2,\,\dots,\,N-1\}, \\[2pt]
\frac{1}{1+c^2}, & i=N;
\end{cases}
\end{equation}
\item
\begin{equation}
\label{eq:1:b1}
\lim_{n\rightarrow\infty}\alpha_{2n}^\textup{MG} = \frac{1+c^2}{\lambda_1(1+c^2\kappa)},
\end{equation}
\begin{equation}
\label{eq:1:b2}
\lim_{n\rightarrow\infty}\alpha_{2n+1}^\textup{MG} = \frac{1+c^2}{\lambda_1(c^2+\kappa)};
\end{equation}
\item
\begin{equation}
\label{eq:1:c}
\lim_{n\rightarrow\infty}\frac{\norm{g_{n+1}}^2}{\norm{g_n}^2} = \frac{c^2(\kappa-1)^2}{(c^2+\kappa)(1+c^2\kappa)};
\end{equation}
\item
\begin{equation}
\label{eq:1:d1}
\lim_{n\rightarrow\infty}\frac{g_{2n+1}^\intercal Ag_{2n+1}}{g_{2n}^\intercal Ag_{2n}} = \frac{c^2(\kappa-1)^2}{(1+c^2\kappa)^2},
\end{equation}
\begin{equation}
\label{eq:1:d2}
\lim_{n\rightarrow\infty}\frac{g_{2n+2}^\intercal Ag_{2n+2}}{g_{2n+1}^\intercal Ag_{2n+1}} = \frac{c^2(\kappa-1)^2}{(c^2+\kappa)^2}.
\end{equation}
\end{enumerate}
\end{theorem}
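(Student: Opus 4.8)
The plan is to reduce everything to the Akaike recursion of Lemma~\ref{thm:Akaike1959:1,2}, the only subtlety being the choice of probability measure: for MG it is the spectrally \emph{weighted} one rather than the unweighted measure that governs SD. Write $g_n=\sum_i\zeta_{i,n}v_i$, so that $g_n^\intercal Ag_n=\sum_i\lambda_i\zeta_{i,n}^2$ and $g_n^\intercal A^2g_n=\sum_i\lambda_i^2\zeta_{i,n}^2$, and set
\[
q_{i,n}:=\frac{\lambda_i\zeta_{i,n}^2}{\sum_{j=1}^N\lambda_j\zeta_{j,n}^2},
\]
which is precisely the quantity whose limit is asserted in part~(a). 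Then $\alpha_n^{\textup{MG}}=\big(\sum_i\lambda_iq_{i,n}\big)^{-1}$, and substituting this into $\zeta_{i,n+1}=(1-\alpha_n^{\textup{MG}}\lambda_i)\zeta_{i,n}$ and dividing numerator and denominator of $q_{i,n+1}$ by $\sum_j\lambda_j\zeta_{j,n}^2$, one checks (the common factor $(\sum_j\lambda_jq_{j,n})^{-2}$ cancels) that $\{q_{i,n}\}$ evolves exactly by the transformation in Lemma~\ref{thm:Akaike1959:1,2}, with $\sum_j\lambda_jp_{j,n}$ there playing the role of $\sum_j\lambda_jq_{j,n}=1/\alpha_n^{\textup{MG}}$.

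Before invoking the lemma I would settle well-posedness. By induction on $n$: if $\zeta_{1,n}\neq0$ and $\zeta_{N,n}\neq0$, then since $\zeta_{N,n}\neq0$ and $\lambda_N>\lambda_1$ the inequality $\lambda_1\sum_j\lambda_j\zeta_{j,n}^2<\sum_j\lambda_j^2\zeta_{j,n}^2$ is strict, i.e.\ $\alpha_n^{\textup{MG}}\lambda_1<1$, and likewise $\alpha_n^{\textup{MG}}\lambda_N>1$; hence $1-\alpha_n^{\textup{MG}}\lambda_1\neq0$ and $1-\alpha_n^{\textup{MG}}\lambda_N\neq0$, so $\zeta_{1,n+1}\neq0$ and $\zeta_{N,n+1}\neq0$. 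Thus $g_n\neq0$ for every $n$ (so each $\alpha_n^{\textup{MG}}$ is defined) and $q_{1,n},q_{N,n}>0$, and Lemma~\ref{thm:Akaike1959:1,2} applies to $\{q_{i,n}\}$.

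The lemma then gives $q_{i,2n}\to(p_*,0,\dots,0,1-p_*)$ and $q_{i,2n+1}\to(1-p_*,0,\dots,0,p_*)$ for some $p_*\in(0,1)$; defining $c$ (up to sign) by $p_*=1/(1+c^2)$ yields part~(a) as stated. For part~(b) I would pass to the limit in $\alpha_n^{\textup{MG}}=(\sum_i\lambda_iq_{i,n})^{-1}$ using $\lambda_N=\kappa\lambda_1$, obtaining $(\lambda_1p_*+\lambda_N(1-p_*))^{-1}=(1+c^2)/(\lambda_1(1+c^2\kappa))$ along the even indices and $(\lambda_1(1-p_*)+\lambda_Np_*)^{-1}=(1+c^2)/(\lambda_1(c^2+\kappa))$ along the odd ones. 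For part~(d) I would use the identity $g_{n+1}^\intercal Ag_{n+1}/g_n^\intercal Ag_n=\sum_i(1-\alpha_n^{\textup{MG}}\lambda_i)^2q_{i,n}$, keep only the surviving $i=1,N$ terms in each parity class, and substitute the limits from (a)--(b) (e.g.\ $1-\alpha_{2n}^{\textup{MG}}\lambda_1\to c^2(\kappa-1)/(1+c^2\kappa)$, $1-\alpha_{2n}^{\textup{MG}}\lambda_N\to(1-\kappa)/(1+c^2\kappa)$); routine simplification gives \eqref{eq:1:d1}--\eqref{eq:1:d2}.

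Part~(c) is the only one that also needs the \emph{unweighted} measure: from $\zeta_{i,n}^2/\sum_j\zeta_{j,n}^2=(q_{i,n}/\lambda_i)/\sum_j(q_{j,n}/\lambda_j)$ one gets $\norm{g_{n+1}}^2/\norm{g_n}^2=\sum_i(1-\alpha_n^{\textup{MG}}\lambda_i)^2(q_{i,n}/\lambda_i)\big/\sum_j(q_{j,n}/\lambda_j)$, and the same limiting procedure --- carried out for \emph{both} parities, which must be checked to give the common value --- collapses this to $c^2(\kappa-1)^2/\big((c^2+\kappa)(1+c^2\kappa)\big)$. I expect the only real obstacle to be the first step: recognizing that it is the $\lambda$-weighted spectral measure $q_{i,n}$, not the unweighted one, that is driven by the Akaike recursion under MG iterations; once that identity and the well-posedness are in place, parts (a)--(d) are substitution plus algebra, with the minor care in (c) of converting between the two spectral measures.
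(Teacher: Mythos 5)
Your proposal is correct and follows essentially the same route as the paper: both identify the $\lambda$-weighted spectral measure $\lambda_i\zeta_{i,n}^2/\sum_j\lambda_j\zeta_{j,n}^2$ as the quantity driven by the Akaike recursion of Lemma~\ref{thm:Akaike1959:1,2}, then obtain (b)--(d) by substitution, converting back to the unweighted measure only for part~(c). The one thing you add beyond the paper's argument is the explicit induction showing $\zeta_{1,n}\neq 0$ and $\zeta_{N,n}\neq 0$ for all $n$ (via $\alpha_n^{\textup{MG}}\lambda_1<1<\alpha_n^{\textup{MG}}\lambda_N$), a well-posedness step the paper leaves implicit.
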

\begin{proof}
We first prove~\eqref{eq:1:a1} and~\eqref{eq:1:a2}.
We have
\[
\zeta_{i,n+1} = \left(1-\alpha_n^\text{MG}\lambda_i\right)\zeta_{i,n},
\]
Together with~\eqref{eq:grd2}, this implies that
\[
\zeta_{i,n+1} = \left(1-\frac{\sum_{j=1}^N\lambda_j\zeta_{j,n}^2}{\sum_{j=1}^N\lambda_j^2\zeta_{j,n}^2}\lambda_i\right)\zeta_{i,n}.
\]
For any $i$ and $n$, let us write $\hat{p}_{i,n} = \lambda_i\zeta_{i,n}^2$, it follows that
\begin{equation}
\label{eq:phat}
\hat{p}_{i,n+1} = \left(1-\frac{\sum_{j=1}^N\hat{p}_{j,n}}{\sum_{j=1}^N\lambda_j\hat{p}_{j,n}}\lambda_i\right)^2\hat{p}_{i,n}.
\end{equation}
Moreover, we define a probability measure
\begin{equation}
\label{eq:p}
p_{i,n} = \frac{\hat{p}_{i,n}}{\sum_{j=1}^N\hat{p}_{j,n}},
\end{equation}
from which we notice that $\sum_{i=1}^N p_{i,n} = 1$.
Hence,
\[
p_{i,n+1} = \left(\frac{\sum_{j=1}^N\lambda_j p_{j,n}-\lambda_i}{\sum_{j=1}^N\lambda_j p_{j,n}}\right)^2\frac{\hat{p}_{i,n}}{\sum_{l=1}^N \hat{p}_{l,n+1}}.
\]
Notice that $p_*$ in Lemma~\ref{thm:Akaike1959:1,2} can be expressed as $1/(1+c^2)$ without loss of generality.
Substituting \eqref{eq:phat} and applying again~\eqref{eq:p}, it follows that
\[
p_{i,n+1} = \frac{\left(\sum_{j=1}^N\lambda_j p_{j,n}-\lambda_i\right)^2}{\sum_{l=1}^N\left(\sum_{j=1}^N\lambda_j p_{j,n}-\lambda_l\right)^2 p_{l,n}}p_{i,n}.
\]
Along with Lemma~\ref{thm:Akaike1959:1,2} the desired result follows.

For the argument (b), notice that
\[
\alpha_n^\text{MG} = \frac{1}{\sum_{j=1}^N\lambda_j p_{j,n}}.
\]
Since argument (a) has been proved, relations \eqref{eq:1:b1} and \eqref{eq:1:b2} trivially follow by applying \eqref{eq:1:a1} and \eqref{eq:1:a2}.

Then we prove the argument (c).
For any $n$, it follows from \eqref{eq:grd} that
\[
\frac{\norm{g_{n+1}}^2}{\norm{g_n}^2} = \frac{\sum_{j=1}^N\left(1-\alpha_n^\text{MG}\lambda_j\right)^2\zeta_{j,n}^2}{\sum_{j=1}^N\zeta_{j,n}^2}.
\]
Combining \eqref{eq:1:a1} and \eqref{eq:1:b1} implies
\[
\begin{split}
\lim_{n\rightarrow\infty}\frac{\norm{g_{2n+1}}^2}{\norm{g_{2n}}^2} &= \frac{\left(1-\frac{1+c^2}{1+c^2\kappa}\right)^2\lambda_1^{-1}\frac{1}{1+c^2}+\left(1-\frac{(1+c^2)\kappa}{1+c^2\kappa}\right)^2\lambda_N^{-1}\frac{c^2}{1+c^2}}{\lambda_1^{-1}\frac{1}{1+c^2}+\lambda_N^{-1}\frac{c^2}{1+c^2}} \\[2pt]
&= \frac{(\kappa-1)^2c^4\kappa+(\kappa-1)^2c^2}{(c^2+\kappa)(1+c^2\kappa)^2}.
\end{split}
\]
After some simplification, we can obtain \eqref{eq:1:c} when the number of iteration is even in denominator.
In an analogous fashion, combining \eqref{eq:1:a2} and \eqref{eq:1:b2} yields
\[
\begin{split}
\lim_{n\rightarrow\infty}\frac{\norm{g_{2n+2}}^2}{\norm{g_{2n+1}}^2} &= \frac{\left(1-\frac{1+c^2}{c^2+\kappa}\right)^2\lambda_1^{-1}\frac{c^2}{1+c^2}+\left(1-\frac{(1+c^2)\kappa}{c^2+\kappa}\right)^2\lambda_N^{-1}\frac{1}{1+c^2}}{\lambda_1^{-1}\frac{c^2}{1+c^2}+\lambda_N^{-1}\frac{1}{1+c^2}} \\[2pt]
&= \frac{(\kappa-1)^2c^2\kappa+(\kappa-1)^2c^4}{(c^2+\kappa)^2(1+c^2\kappa)}.
\end{split}
\]
One finds that the final result of the odd case converges also to the same limit, which is the desired conclusion.

Finally, for the argument (d), we can similarly combine \eqref{eq:1:a1} and \eqref{eq:1:b1}, which implies
\[
\begin{split}
\lim_{n\rightarrow\infty}\frac{g_{2n+1}^\intercal Ag_{2n+1}}{g_{2n}^\intercal Ag_{2n}} &= \left(1-\frac{1+c^2}{\lambda_1(1+c^2\kappa)}\lambda_1\right)^2\frac{1}{1+c^2} + \left(1-\frac{1+c^2}{\lambda_1(1+c^2\kappa)}\lambda_N\right)^2\frac{c^2}{1+c^2} \\[2pt]
&= \frac{c^4(\kappa-1)^2+c^2(\kappa-1)^2}{(1+c^2\kappa)^2(1+c^2)}.
\end{split}
\]
Repeating this process for another case by using \eqref{eq:1:a2} and \eqref{eq:1:b2} yields
\[
\begin{split}
\lim_{n\rightarrow\infty}\frac{g_{2n+2}^\intercal Ag_{2n+2}}{g_{2n+1}^\intercal Ag_{2n+1}} &= \left(1-\frac{1+c^2}{\lambda_1(c^2+\kappa)}\lambda_1\right)^2\frac{c^2}{1+c^2} + \left(1-\frac{1+c^2}{\lambda_1(c^2+\kappa)}\lambda_N\right)^2\frac{1}{1+c^2} \\[2pt]
&= \frac{c^2(\kappa-1)^2+c^4(\kappa-1)^2}{(c^2+\kappa)^2(1+c^2)}.
\end{split}
\]
After some simplification, we can obtain \eqref{eq:1:d1} and \eqref{eq:1:d2}.
This completes our proof.
\end{proof}
\begin{remark}
The assumption used in Theorem~\ref{thm:1} is not restrictive since if there exist some repeated eigenvalues, then we can choose the corresponding eigenvectors so that the superfluous ones vanish~\cite{Fletcher2005}.
Moreover, if $\zeta_{1,0}$ or $\zeta_{N,0}$ equals zero, then the second condition can be simply replaced by the components involving inner indices without changing the results discussed later on.
\end{remark}
Note that argument~(a) in Theorem~\ref{thm:1} has been proved in~\cite{Pronzato2006} for a framework called $P$-gradient algorithms, while results~(b) to~(d) for the MG method have not appeared in any literature.
(b) shows that MG has also the zigzag behavior, namely, $\alpha_n$ alternates between two directions.
The implications for Theorem~\ref{thm:1} shall be seen later in Section~\ref{sec:3}.
For now, we give the asymptotic behavior of the quadratic function $f$ for completeness.
\begin{theorem}
\label{thm:2}
Under the assumptions of Theorem~\ref{thm:1}, the following results hold
\begin{equation}
\label{eq:2:1}
\lim_{n\rightarrow\infty}\frac{f(x_{2n+1})-f(x_*)}{f(x_{2n})-f(x_*)} = \frac{c^2(1+c^2\kappa^2)(\kappa-1)^2}{(c^2+\kappa^2)(1+c^2\kappa)^2},
\end{equation}
\begin{equation}
\label{eq:2:2}
\lim_{n\rightarrow\infty}\frac{f(x_{2n+2})-f(x_*)}{f(x_{2n+1})-f(x_*)} = \frac{c^2(c^2+\kappa^2)(\kappa-1)^2}{(1+c^2\kappa^2)(c^2+\kappa)^2},
\end{equation}
and
\begin{equation}
\label{eq:2:3}
\lim_{n\rightarrow\infty}\frac{f(x_{2n+2})-f(x_*)}{f(x_{2n})-f(x_*)} = \lim_{n\rightarrow\infty}\frac{\norm{g_{n+1}}^4}{\norm{g_n}^4}.
\end{equation}
\end{theorem}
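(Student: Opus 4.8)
The plan is to obtain Theorem~\ref{thm:2} as a corollary of Theorem~\ref{thm:1}; the only genuinely new ingredient is the identity that rewrites $f(x_n)-f(x_*)$ as an $A^{-1}$-weighted energy of the gradient. First I would record that, since $b=Ax_*$ and $g_n=A(x_n-x_*)=-Ae_n$,
\[
f(x_n)-f(x_*)=\frac{1}{2}e_n^\intercal Ae_n=\frac{1}{2}g_n^\intercal A^{-1}g_n=\frac{1}{2}\sum_{i=1}^N\zeta_{i,n}^2/\lambda_i,
\]
using the eigenexpansion~\eqref{eq:grd2}. Combined with $\zeta_{i,n+1}=(1-\alpha_n^\textup{MG}\lambda_i)\zeta_{i,n}$, this gives, for every $n$,
\[
\frac{f(x_{n+1})-f(x_*)}{f(x_n)-f(x_*)}=\frac{\sum_{i=1}^N(1-\alpha_n^\textup{MG}\lambda_i)^2\zeta_{i,n}^2/\lambda_i}{\sum_{i=1}^N\zeta_{i,n}^2/\lambda_i}.
\]

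Next I would rewrite this ratio using the probability weights $p_{i,n}$ of the proof of Theorem~\ref{thm:1}. Dividing numerator and denominator by $\sum_j\hat p_{j,n}=\sum_j\lambda_j\zeta_{j,n}^2$ and using $\zeta_{i,n}^2/\lambda_i=\hat p_{i,n}/\lambda_i^2$ together with $p_{i,n}=\hat p_{i,n}/\sum_j\hat p_{j,n}$ turns it into the ratio of $\sum_i(1-\alpha_n^\textup{MG}\lambda_i)^2 p_{i,n}/\lambda_i^2$ to $\sum_i p_{i,n}/\lambda_i^2$. Then I would pass to the limit along the even and the odd subsequences separately: by Theorem~\ref{thm:1}(a) only the $i=1$ and $i=N$ terms survive, and by Theorem~\ref{thm:1}(b) the factors $1-\alpha_{2n}^\textup{MG}\lambda_i$ and $1-\alpha_{2n+1}^\textup{MG}\lambda_i$ reduce, after substituting $\lambda_N=\kappa\lambda_1$, to simple rational expressions in $c$ and $\kappa$ (for instance $1-\alpha_{2n}^\textup{MG}\lambda_1\to c^2(\kappa-1)/(1+c^2\kappa)$ and $1-\alpha_{2n}^\textup{MG}\lambda_N\to-(\kappa-1)/(1+c^2\kappa)$), with all the $\lambda_1$'s cancelling. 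Collecting terms and simplifying the two resulting expressions yields~\eqref{eq:2:1} and~\eqref{eq:2:2}.

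Finally, for~\eqref{eq:2:3} I would write the two-step ratio as the product
\[
\frac{f(x_{2n+2})-f(x_*)}{f(x_{2n})-f(x_*)}=\frac{f(x_{2n+2})-f(x_*)}{f(x_{2n+1})-f(x_*)}\cdot\frac{f(x_{2n+1})-f(x_*)}{f(x_{2n})-f(x_*)},
\]
so that its limit is the product of the right-hand sides of~\eqref{eq:2:2} and~\eqref{eq:2:1}; the factors $c^2+\kappa^2$ and $1+c^2\kappa^2$ cancel, leaving $c^4(\kappa-1)^4/\bigl((c^2+\kappa)^2(1+c^2\kappa)^2\bigr)$. On the other hand, Theorem~\ref{thm:1}(c) gives $\lim_n\norm{g_{n+1}}^2/\norm{g_n}^2=c^2(\kappa-1)^2/\bigl((c^2+\kappa)(1+c^2\kappa)\bigr)$, hence $\lim_n\norm{g_{n+1}}^4/\norm{g_n}^4$ is the square of this, which is exactly the same quantity; equating the two establishes~\eqref{eq:2:3}. (Equivalently, the asymptotic two-step contraction factor of both $\zeta_{1,n}^2$ and $\zeta_{N,n}^2$ is the common value $\rho^2$, where $\rho$ is the gradient-norm ratio of Theorem~\ref{thm:1}(c); since the relative weights of the $i=1$ and $i=N$ components stabilize by Theorem~\ref{thm:1}(a)--(b), any weighted gradient energy $\sum_i w_i\zeta_{i,n}^2$ scales by $\rho^2$ over two steps in the limit, and taking $w_i=1/\lambda_i$ recovers~\eqref{eq:2:3} directly.)

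All the computations are of the same elementary type as in the proof of Theorem~\ref{thm:1}, so I expect no conceptual difficulty. The main thing to watch is the algebraic bookkeeping: one must keep the weights $p_{i,n}/\lambda_i^2$ — replacing them by $p_{i,n}$ or $\lambda_i p_{i,n}$ would reproduce the $\norm{g_n}^2$ and $g_n^\intercal Ag_n$ ratios of Theorem~\ref{thm:1} rather than the $f$ ratios — and one must verify that the product in the last part genuinely telescopes to the square of the gradient-norm ratio.
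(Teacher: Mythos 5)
Your proposal is correct and follows essentially the same route as the paper: both reduce the ratio $\bigl(f(x_{n+1})-f(x_*)\bigr)/\bigl(f(x_n)-f(x_*)\bigr)$ to a quotient of $A^{-1}$-weighted moments of the limiting probability measure from Theorem~\ref{thm:1}(a), evaluate the even and odd subsequences separately, and obtain~\eqref{eq:2:3} by multiplying~\eqref{eq:2:1} and~\eqref{eq:2:2} and comparing with the square of~\eqref{eq:1:c}. The only cosmetic difference is that the paper first expands the ratio as $1+\frac{(g_n^\intercal Ag_n)^3}{(g_n^\intercal A^{-1}g_n)(g_n^\intercal A^2g_n)^2}-\frac{2(g_n^\intercal Ag_n)(g_n^\intercal g_n)}{(g_n^\intercal A^2g_n)(g_n^\intercal A^{-1}g_n)}$ and passes to the limit using part~(a) alone, whereas you keep the factored form $(1-\alpha_n^\textup{MG}\lambda_i)^2$ and invoke part~(b) for the limiting steplengths; the computations are equivalent.
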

\begin{proof}
For any $n$, it follows from~\eqref{eq:quad} that
\[
\frac{f(x_{n+1})-f(x_*)}{f(x_n)-f(x_*)} = 1 + \frac{\left(g_n^\intercal Ag_n\right)^3}{\left(g_n^\intercal A^{-1}g_n\right)\left(g_n^\intercal A^2g_n\right)^2} - \frac{2\left(g_n^\intercal Ag_n\right)\left(g_n^\intercal g_n\right)}{\left(g_n^\intercal A^2g_n\right)\left(g_n^\intercal A^{-1}g_n\right)}.
\]
Let us write $p_{i,n}$ as defined in~\eqref{eq:phat} and~\eqref{eq:p}, in which case we obtain
\[
\frac{f(x_{n+1})-f(x_*)}{f(x_n)-f(x_*)} = 1 + \frac{1}{\left(\sum_{j=1}^N\lambda_j^{-2}p_{j,n}\right)\left(\sum_{j=1}^N\lambda_j p_{j,n}\right)^2} - \frac{2\sum_{j=1}^N\lambda_j^{-1} p_{j,n}}{\left(\sum_{j=1}^N\lambda_j^{-2}p_{j,n}\right)\left(\sum_{j=1}^N\lambda_j p_{j,n}\right)}.
\]
If $n$ is an even number, from~\eqref{eq:1:a1}, one finds that
\[
\begin{split}
\lim_{n\rightarrow\infty}\frac{f(x_{n+1})-f(x_*)}{f(x_n)-f(x_*)} &= 1 + \frac{1}{\left(\frac{\kappa^2+c^2}{1+c^2}\right)\left(\frac{1+\kappa c^2}{\kappa(1+c^2)}\right)^2} - \frac{2\left(\frac{\kappa+c^2}{1+c^2}\right)}{\left(\frac{\kappa^2+c^2}{1+c^2}\right)\left(\frac{1+\kappa c^2}{\kappa(1+c^2)}\right)} \\[2pt]
&= \frac{\kappa^4c^4-2\kappa^3c^4+\kappa^2c^4+\kappa^2c^2-2\kappa c^2+c^2}{(c^2+\kappa^2)(1+c^2\kappa)^2}.
\end{split}
\]
Notice that
\[
\kappa^4c^4-2\kappa^3c^4+\kappa^2c^4+\kappa^2c^2-2\kappa c^2+c^2 = c^2(1+c^2\kappa^2)(\kappa-1)^2,
\]
which yields the first equation.
Similarly, if $n$ is an odd number, it follows that
\[
\begin{split}
\lim_{n\rightarrow\infty}\frac{f(x_{n+1})-f(x_*)}{f(x_n)-f(x_*)} &= 1 + \frac{1}{\left(\frac{\kappa^2c^2+1}{1+c^2}\right)\left(\frac{c^2+\kappa}{\kappa(1+c^2)}\right)^2} - \frac{2\left(\frac{\kappa c^2+1}{1+c^2}\right)}{\left(\frac{\kappa^2c^2+1}{1+c^2}\right)\left(\frac{c^2+\kappa}{\kappa(1+c^2)}\right)} \\[2pt]
&= \frac{\kappa^2c^4-2\kappa c^4+c^4+\kappa^4c^2-2\kappa^3c^2+\kappa^2c^2}{(c^2\kappa^2+1)(c^2+\kappa)^2}.
\end{split}
\]
The numerator can be merged as follows
\[
\kappa^2c^4-2\kappa c^4+c^4+\kappa^4c^2-2\kappa^3c^2+\kappa^2c^2 = c^2(c^2+\kappa^2)(\kappa-1)^2,
\]
which yields the second result.
Finally, \eqref{eq:2:3} follows immediately by combining~\eqref{eq:2:1}, \eqref{eq:2:2} and~\eqref{eq:1:c}.
This completes our proof.
\end{proof}

\section{New alignment methods without Cauchy steplength}
\label{sec:3}

As far as we know, all existing gradient methods with alignment are based on the Cauchy steplength.
After a further rearrangement of steps, \cite{Gonzaga2016} concludes that one could break the Cauchy cycle by periodically applying some short steplengths to accelerate the convergence of gradient methods.
We show here that such condition is not necessary and several methods that potentially possess the same feature without Cauchy step can be derived.

\cite{DeAsmundis2013} observed that a constant equal to~$1/(\lambda_1+\lambda_N)$ could lead to alignment property.
Here we extend it to a more general case.
\begin{theorem}
\label{thm:3}
Consider the linear system \eqref{eq:ls} and the gradient method \eqref{eq:x} with a positive constant steplength $\hat{\alpha}$ such that
\begin{equation}
\label{eq:const}
\hat{\alpha} \le \frac{2}{\lambda_1+\lambda_N}
\end{equation}
being used to solve \eqref{eq:ls}.
Then the sequence $\{x_n\}$ converges to $x_*$ for any starting point $x_0$.
Moreover, if equality holds, then
\begin{equation}
\label{eq:alm1}
\lim_{n\rightarrow\infty}\frac{\zeta_{i,n}}{\zeta_{1,n}} =
\begin{cases}
0, & i = 2,\,3,\,\dots,\,N-1, \\[2pt]
\frac{\zeta_{N,0}}{\zeta_{1,0}}(-1)^n, & i = N;
\end{cases}
\end{equation}
otherwise,
\begin{equation}
\label{eq:alm2}
\lim_{n\rightarrow\infty}\frac{\zeta_{i,n}}{\zeta_{1,n}} = 0,\quad i = 2,\,3,\,\dots,\,N.
\end{equation}
\end{theorem}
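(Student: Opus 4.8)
The plan is to exploit the explicit form of the gradient components under a constant steplength. From the recursion $\zeta_{i,n+1} = (1-\alpha_n\lambda_i)\zeta_{i,n}$ derived just after \eqref{eq:grd2}, setting $\alpha_n \equiv \hat{\alpha}$ yields $\zeta_{i,n} = r_i^{\,n}\zeta_{i,0}$ with $r_i := 1-\hat{\alpha}\lambda_i$, hence $g_n = \sum_{i=1}^N r_i^{\,n}\zeta_{i,0}v_i$. Everything then reduces to comparing the magnitudes of the contraction factors $r_1,\dots,r_N$, which is controlled by the piecewise-linear convex map $\lambda\mapsto|1-\hat{\alpha}\lambda|$ on $[\lambda_1,\lambda_N]$: its maximum over that interval is attained at an endpoint, i.e.\ among $|r_1|$ and $|r_N|$, and $\hat{\alpha}=2/(\lambda_1+\lambda_N)$ is exactly the value at which $|r_1|=|r_N|$.

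For convergence I would first check that $|r_i|<1$ for all $i$ whenever $0<\hat{\alpha}\le 2/(\lambda_1+\lambda_N)$. Since $\hat{\alpha}>0$ and $\lambda_i>0$ we have $r_i<1$; and $r_i \ge r_N = 1-\hat{\alpha}\lambda_N \ge 1-\tfrac{2\lambda_N}{\lambda_1+\lambda_N} = \tfrac{\lambda_1-\lambda_N}{\lambda_1+\lambda_N} > -1$, the last inequality using $\lambda_1>0$. Hence $\max_i|r_i|<1$, so $\norm{g_n}^2 = \sum_i r_i^{\,2n}\zeta_{i,0}^2 \le (\max_i|r_i|)^{2n}\norm{g_0}^2 \to 0$; since $A$ is invertible, $e_n = x_*-x_n = -A^{-1}g_n \to 0$, i.e.\ $x_n\to x_*$, for every starting point.

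For the alignment statements I would assume $\zeta_{1,0}\neq 0$ (if it vanishes, replace index $1$ by the smallest surviving index, exactly as in the Remark after Theorem~\ref{thm:1}) and write $\zeta_{i,n}/\zeta_{1,n} = (r_i/r_1)^{\,n}\,\zeta_{i,0}/\zeta_{1,0}$. In the strict case $\hat{\alpha}<2/(\lambda_1+\lambda_N)$ one has $\hat{\alpha}\lambda_1 < \tfrac{2\lambda_1}{\lambda_1+\lambda_N}<1$, so $r_1>0$; for each $i\ge 2$ either $r_i\ge 0$, and then $r_i = 1-\hat{\alpha}\lambda_i < 1-\hat{\alpha}\lambda_1 = r_1$, or $r_i<0$, and then $|r_i|<r_1$ is equivalent to $\hat{\alpha}(\lambda_1+\lambda_i)<2$, which holds since $\hat{\alpha}<2/(\lambda_1+\lambda_N)\le 2/(\lambda_1+\lambda_i)$. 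Thus $|r_i/r_1|<1$ and the ratio tends to $0$, giving \eqref{eq:alm2}. In the boundary case $\hat{\alpha}=2/(\lambda_1+\lambda_N)$ one computes $r_1=-r_N=\rho$ with $\rho:=(\lambda_N-\lambda_1)/(\lambda_N+\lambda_1)\in(0,1)$; for $2\le i\le N-1$ the strict inequalities $\lambda_1<\lambda_i<\lambda_N$ force $-\rho<r_i<\rho$, so $|r_i/r_1|<1$ and $\zeta_{i,n}/\zeta_{1,n}\to 0$, while for $i=N$ we have $r_N/r_1=-1$, hence $\zeta_{N,n}/\zeta_{1,n}=(-1)^n\,\zeta_{N,0}/\zeta_{1,0}$ identically, which is \eqref{eq:alm1}.

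There is no genuinely hard step; the argument is essentially bookkeeping about the signs of the $r_i$. The one point requiring care --- and the conceptual heart of the statement --- is recognizing that $2/(\lambda_1+\lambda_N)$ is precisely the threshold at which the two extreme contraction factors switch from $|r_1|>|r_N|$ (so that $v_1$ alone survives the normalization) to $|r_1|=|r_N|$ (so that $v_1$ and $v_N$ both survive, with an alternating sign), together with making sure the hypothesis $\hat{\alpha}\le 2/(\lambda_1+\lambda_N)$ is invoked exactly where the bound $|r_N|\le r_1$ (equivalently $|r_N|<1$ for the convergence part) is actually needed. The degenerate starting points with $\zeta_{1,0}=0$, or $\zeta_{N,0}=0$ in the boundary case, are handled by the same reduction indicated in that Remark.
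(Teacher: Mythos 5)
Your proof is correct and follows essentially the same route as the paper: both reduce the alignment claims to the ratio $\left(\frac{1-\hat{\alpha}\lambda_i}{1-\hat{\alpha}\lambda_1}\right)^n$ and to the sign/magnitude bookkeeping for the contraction factors $1-\hat{\alpha}\lambda_i$, with $2/(\lambda_1+\lambda_N)$ as the threshold where $|1-\hat{\alpha}\lambda_N|$ catches up with $1-\hat{\alpha}\lambda_1$. The only difference is in the convergence part, where you give a direct self-contained bound $\max_i|1-\hat{\alpha}\lambda_i|<1$ instead of the paper's appeal to the cited result that any steplength below $2\alpha_n^{\text{SD}}$ yields convergence; your version is slightly more elementary and equally valid.
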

\begin{proof}
We have
\[
\hat{\alpha} \le \frac{2}{\lambda_1+\lambda_N} < \frac{2}{\lambda_N} \le 2\alpha_n^\text{SD}.
\]
By~\cite{Raydan2002}, it is easy to deduce that the sequence $\{x_n\}$ converges to $x_*$ with a steplength $\alpha < 2\alpha_n^\text{SD}$.
Hence, the first statement holds.
One finds that
\[
\lim_{n\rightarrow\infty}\frac{\zeta_{i,n}}{\zeta_{1,n}} = \frac{\zeta_{i,0}}{\zeta_{1,0}} \lim_{n\rightarrow\infty} \left(\frac{1-\hat{\alpha}\lambda_i}{1-\hat{\alpha}\lambda_1}\right)^n.
\]
Let
\[
\varphi_i = \frac{1-\hat{\alpha}\lambda_i}{1-\hat{\alpha}\lambda_1}.
\]
For~\eqref{eq:alm2} to be satisfied, one needs to impose the condition $\abs{\varphi_i} < 1$ for all $i = 2,\,3,\,\dots,\,N$, which yields
\[
(\lambda_i+\lambda_1)\hat{\alpha} < 2,\quad (\lambda_i-\lambda_1)\hat{\alpha} > 0.
\]
The second one is obviously satisfied, while the first one leads to
\[
\hat{\alpha} < \frac{2}{\lambda_1+\lambda_N}.
\]
If equality holds, then
\[
\varphi_i = \frac{\lambda_N+\lambda_1-2\lambda_i}{\lambda_N-\lambda_1},
\]
It is clear that $\varphi_N = -1$.
Then the second statement trivially follows, which completes the proof.
\end{proof}

Note that $i=1$ leads to the trivial case $\varphi_1=1$, and thus the limit in both~\eqref{eq:alm1} and~\eqref{eq:alm2} equals~$1$.
From Theorem~\ref{thm:3} we find that condition~\eqref{eq:const} has a twofold effect:
driving the alignment property when strict partial order holds, as shown in~\eqref{eq:alm2}, and forcing the search into a two-dimensional space in the equal case, as shown in~\eqref{eq:alm1}.
It means that if there exist some steps asymptotically making the equality of~\eqref{eq:alm1} attainable, then it has similar tendency with the SD method, namely, alternating between two orthogonal directions.
On the other hand, we can add a fractional factor to periodically break the cycle.
This asymptotically yields a constant steplength strictly smaller than $2/(\lambda_1+\lambda_N)$, leading to alignment process in the subsequent several iterations according to~\eqref{eq:alm2}.

Recall that~\cite{Dai2006} proposed a gradient method of the form
\[
\alpha_n^\text{AO} = \frac{\norm{g_n}}{\norm{Ag_n}}.
\]
It asymptotically converges to the optimal steplength
\[
\lim_{n\rightarrow\infty}\alpha_n^\text{AO} = \alpha^\text{OPT} = \frac{2}{\lambda_1+\lambda_N},
\]
which minimizes the coefficient matrix
\[
\alpha^\text{OPT} = \argmin_{\alpha}\norm{I - \alpha A}.
\]
Thus we call it asymptotically optimal (AO) method.
Notice that the following relationship holds
\begin{equation}
\label{eq:rls}
\alpha_n^\text{MG} \le \alpha_n^\text{AO} \le \alpha_n^\text{SD},
\end{equation}
which can be easily proved by the Cauchy-Schwarz inequality
\[
\frac{g_n^\intercal Ag_n}{g_n^\intercal A^2g_n} \le \frac{\norm{g_n} \norm{Ag_n}}{\norm{Ag_n}^2} = \frac{\norm{g_n}^2}{\norm{Ag_n} \norm{g_n}} \le \frac{g_n^\intercal g_n}{g_n^\intercal Ag_n}.
\]
It is known that AO generates monotone curve and often leads to slow convergence.

We observe that the limit of AO satisfies condition~\eqref{eq:alm2} and may potentially be improved by a cyclic breaking.
For example, we can choose a shorter one to constantly align the gradient vector to the one-dimensional space spanned by $v_1$.
Let $\tilde{\alpha}_n=\theta\alpha_n^\text{AO}$ where~$0<\theta<1$.
It follows that
\[
\lim_{n\rightarrow\infty}\tilde{\alpha}_n < \frac{2}{\lambda_1+\lambda_N}.
\]
From Theorem~\ref{thm:3}, we observe that $\tilde{\alpha}_n$ can asymptotically trigger the alignment behavior.
Hence, we can write a new gradient method called AO with alignment (AOA) as follows
\begin{equation}
\label{eq:aoa}
\alpha_n^\text{AOA} =
\begin{cases}
\alpha_n^\text{AO}, & n\bmod (d_1+d_2) < d_1, \\[2pt]
\tilde{\alpha}_n, & n\bmod (d_1+d_2) = d_1, \\[2pt]
\alpha_{n-1}^\text{AOA}, & \text{otherwise},
\end{cases}
\end{equation}
with $d_1,\,d_2 \ge 1$.
Important differences between SDA and AOA come from the fact that the Cauchy step in SDA zigzags itself in two orthogonal directions, while the AO step in AOA converges to a constant and the constant leads later to the same feature.
\\

On the other hand, since the spectral properties of MG have been studied in Section~\ref{sec:2}, we are now prepared to propose our new methods based on them.
We first give some notations
\[
\alpha_n^\text{A2} = \left(\frac{1}{\alpha_{n-1}^\text{MG}} + \frac{1}{\alpha_n^\text{MG}}\right)^{-1},
\]
\[
\alpha_n^\text{Y2} = 2\left(\sqrt{\left(\frac{1}{\alpha_{n-1}^\text{MG}} - \frac{1}{\alpha_n^\text{MG}}\right)^2 + \frac{4g_n^\intercal Ag_n}{\left(\alpha_{n-1}^\text{MG}\right)^2 g_{n-1}^\intercal Ag_{n-1}}} + \frac{1}{\alpha_{n-1}^\text{MG}} + \frac{1}{\alpha_n^\text{MG}}\right)^{-1}.
\]
Note that Y2 has been proposed in~\cite{Dai2005c} as a component of the 2-dimensional finite termination method.
\begin{theorem}
\label{thm:4}
Consider the linear system $Ax=b$ where $A\in\mathbb{R}^{N\times N}$ is SPD and $b\in\mathbb{R}^N$.
Assume that the sequence of solution vectors $\{x_n\}$ is generated by the MG method.
If $0<\lambda_1<\dots<\lambda_N$ and the starting point $x_0$ is such that $\zeta_{1,0} \ne 0$ and $\zeta_{N,0} \ne 0$, then the following results hold
\begin{equation}
\label{eq:4:1}
\lim_{n\rightarrow\infty}\alpha_n^\textup{A2} = \frac{1}{\lambda_1+\lambda_N},
\end{equation}
\begin{equation}
\label{eq:4:2}
\lim_{n\rightarrow\infty}\alpha_n^\textup{Y2} = \frac{1}{\lambda_N},
\end{equation}
and
\begin{equation}
\label{eq:4:3}
\lim_{n\rightarrow\infty}\left(\frac{1}{\alpha_{n-1}^\textup{MG}\alpha_n^\textup{MG}} - \frac{g_n^\intercal Ag_n}{\left(\alpha_{n-1}^\textup{MG}\right)^2 g_{n-1}^\intercal Ag_{n-1}}\right) = \lambda_1\lambda_N.
\end{equation}
\end{theorem}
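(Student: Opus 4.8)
The plan is to derive all three limits directly from the asymptotic formulas for $\alpha_n^\text{MG}$ and for the ratios $g_{n+1}^\intercal A g_{n+1}/g_n^\intercal A g_n$ established in Theorem~\ref{thm:1}, using also $\lambda_N=\kappa\lambda_1$. Throughout I would write $a_n=1/\alpha_{n-1}^\text{MG}$, $b_n=1/\alpha_n^\text{MG}$, and $t_n=(g_n^\intercal A g_n)/((\alpha_{n-1}^\text{MG})^2\, g_{n-1}^\intercal A g_{n-1})$, so that $\alpha_n^\text{A2}=(a_n+b_n)^{-1}$, the left side of~\eqref{eq:4:3} equals $a_n b_n-t_n$, and $\alpha_n^\text{Y2}=2(\sqrt{(a_n-b_n)^2+4t_n}+a_n+b_n)^{-1}$. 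For~\eqref{eq:4:1}, take reciprocals in~\eqref{eq:1:b1} and~\eqref{eq:1:b2} and add: for even $n$ this gives $a_n+b_n\to\lambda_1\big((c^2+\kappa)+(1+c^2\kappa)\big)/(1+c^2)=\lambda_1(1+c^2)(1+\kappa)/(1+c^2)=\lambda_1(1+\kappa)=\lambda_1+\lambda_N$, and the odd case is identical with the two limits interchanged. Hence $a_n+b_n\to\lambda_1+\lambda_N$ along the full sequence, which is~\eqref{eq:4:1}.

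For~\eqref{eq:4:3}, take $n$ even first. From~\eqref{eq:1:b1}--\eqref{eq:1:b2}, $a_n b_n\to\lambda_1^2(c^2+\kappa)(1+c^2\kappa)/(1+c^2)^2$; and $t_n\to\big(\lambda_1^2(c^2+\kappa)^2/(1+c^2)^2\big)\cdot\big(c^2(\kappa-1)^2/(c^2+\kappa)^2\big)$, the first factor being $(\alpha_{n-1}^\text{MG})^{-2}$ via~\eqref{eq:1:b2} and the second being $\lim g_n^\intercal A g_n/g_{n-1}^\intercal A g_{n-1}$ via~\eqref{eq:1:d2}, since $n-1$ is odd. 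Subtracting, the bracket $(c^2+\kappa)(1+c^2\kappa)-c^2(\kappa-1)^2$ expands and collapses to $\kappa(1+c^2)^2$, so $a_n b_n-t_n\to\lambda_1^2\kappa=\lambda_1\lambda_N$. For $n$ odd one instead uses~\eqref{eq:1:b1} and~\eqref{eq:1:d1}; the same cancellation gives the same limit, yielding~\eqref{eq:4:3}.

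Finally,~\eqref{eq:4:2} follows from the algebraic identity
\[
(a_n-b_n)^2+4t_n = (a_n+b_n)^2-4(a_n b_n-t_n),
\]
whose right side converges to $(\lambda_1+\lambda_N)^2-4\lambda_1\lambda_N=(\lambda_N-\lambda_1)^2$ by~\eqref{eq:4:1} and~\eqref{eq:4:3}. Taking the positive square root, legitimate because $\lambda_N>\lambda_1$, gives $\sqrt{(a_n-b_n)^2+4t_n}\to\lambda_N-\lambda_1$, hence $\alpha_n^\text{Y2}=2/\big(\sqrt{(a_n-b_n)^2+4t_n}+a_n+b_n\big)\to 2/\big((\lambda_N-\lambda_1)+(\lambda_1+\lambda_N)\big)=1/\lambda_N$.

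All the arithmetic here is mechanical once Theorem~\ref{thm:1} is available; the two places that need attention are keeping straight which parity is in play when invoking~\eqref{eq:1:d1} versus~\eqref{eq:1:d2} for the gradient ratio — and checking that both parities produce the same limit, so the full sequences converge — and spotting the identity above that makes the $\text{Y2}$ expression collapse. That identity is the only genuinely non-routine step; everything else is substitution and simplification.
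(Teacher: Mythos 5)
Your proof is correct and follows essentially the same route as the paper: both obtain \eqref{eq:4:1} by adding the reciprocals of the limits \eqref{eq:1:b1}--\eqref{eq:1:b2}, both establish \eqref{eq:4:3} by computing the limit of $g_n^\intercal Ag_n/\bigl((\alpha_{n-1}^\text{MG})^2 g_{n-1}^\intercal Ag_{n-1}\bigr)$ from \eqref{eq:1:b1}--\eqref{eq:1:b2} and \eqref{eq:1:d1}--\eqref{eq:1:d2} and simplifying, and both derive \eqref{eq:4:2} via the identity rewriting the radicand as $(\alpha_n^\text{A2})^{-2}-4\bigl(1/(\alpha_{n-1}^\text{MG}\alpha_n^\text{MG})-t_n\bigr)$. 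Your explicit parity bookkeeping is slightly more careful than the paper's, but the argument is the same.
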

\begin{proof}
The first conclusion follows immediately by combining \eqref{eq:1:b1} and \eqref{eq:1:b2}.
For the second argument, we have
\[
\alpha_n^\text{Y2} = 2\left(\sqrt{\left(\alpha_n^\text{A2}\right)^{-2} - \frac{4}{\alpha_{n-1}^\text{MG}\alpha_n^\text{MG}} + \frac{4g_n^\intercal Ag_n}{\left(\alpha_{n-1}^\text{MG}\right)^2 g_{n-1}^\intercal Ag_{n-1}}} + \left(\alpha_n^\text{A2}\right)^{-1}\right)^{-1}.
\]
By combining \eqref{eq:1:b1}, \eqref{eq:1:b2}, \eqref{eq:1:d1} and \eqref{eq:1:d2}, it follows that
\[
\lim_{n\rightarrow\infty}\frac{g_{2n+2}^\intercal Ag_{2n+2}}{\left(\alpha_{2n+1}^\text{MG}\right)^2 g_{2n+1}^\intercal Ag_{2n+1}} = \lim_{n\rightarrow\infty}\frac{g_{2n+1}^\intercal Ag_{2n+1}}{\left(\alpha_{2n}^\text{MG}\right)^2 g_{2n}^\intercal Ag_{2n}} = \frac{\lambda_1^2c^2(\kappa-1)^2}{(1+c^2)^2}.
\]
Hence, one can see that
\[
\lim_{n\rightarrow\infty}\left(\frac{1}{\alpha_{n-1}^\text{MG}\alpha_n^\text{MG}} - \frac{g_n^\intercal Ag_n}{\left(\alpha_{n-1}^\text{MG}\right)^2 g_{n-1}^\intercal Ag_{n-1}}\right) = \frac{\lambda_1^2(1+c^2\kappa)(c^2+\kappa)}{(1+c^2)^2} - \frac{\lambda_1^2c^2(\kappa-1)^2}{(1+c^2)^2},
\]
which implies the second conclusion after some simplification.
Further, along with \eqref{eq:4:1}, we have
\[
\lim_{n\rightarrow\infty}\alpha_n^\text{Y2} = 2\left(\sqrt{(\lambda_1+\lambda_N)^2-4\lambda_1\lambda_N}+\lambda_1+\lambda_N\right)^{-1} = \frac{1}{\lambda_N}.
\]
This completes our proof.
\end{proof}

One may conclude from Theorem~\ref{thm:4} that A2 and Y2 are similar to the auxiliary steplengths discussed in~\cite{DeAsmundis2013} and~\cite{DeAsmundis2014}.
However, since MG has shorter steplength than SD, we expect that the former might be more smoother than the latter.
After a substitution of labels, we are able to define MG with alignment (MGA) and MG with constant steplength (MGC) as follows
\begin{equation}
\label{eq:mga}
\alpha_n^\text{MGA} =
\begin{cases}
\alpha_n^\text{MG}, & n\bmod (d_1+d_2) < d_1, \\[2pt]
\alpha_n^\text{A2}, & n\bmod (d_1+d_2) = d_1, \\[2pt]
\alpha_{n-1}^\text{MGA}, & \text{otherwise},
\end{cases}
\end{equation}
\begin{equation}
\label{eq:mgc}
\alpha_n^\text{MGC} =
\begin{cases}
\alpha_n^\text{MG}, & n\bmod (d_1+d_2) < d_1, \\[2pt]
\alpha_n^\text{Y2}, & n\bmod (d_1+d_2) = d_1, \\[2pt]
\alpha_{n-1}^\text{MGC}, & \text{otherwise},
\end{cases}
\end{equation}
with $d_1,\,d_2 \ge 1$.
Recall that the motivation in~\cite{DeAsmundis2013} is to align the algorithm search into the one-dimensional space spanned by $v_1$, which can be summarized by Theorem~\ref{thm:3}.
On the other hand, the strategy in~\cite{DeAsmundis2014} is to foster a special steplength towards the inverse of the largest eigenvalue for which the gradient element has not vanished.
One could easily conclude from Theorem~\ref{thm:3} that $\hat{\alpha}=1/\lambda_N$ satisfies also the former motivation, while $\hat{\alpha}=1/(\lambda_1+\lambda_N)$ may not satisfy the latter one which depends on the relative magnitude of $\lambda_1$.
This may explain the superiority of SDC compared to SDA, and we will see later that this argument remains true for MGA and MGC.

The spectral properties that have been discussed above can be generalized to other basic steplengths of the form
\[
\alpha_n = \frac{g_n^\intercal A^{\rho}g_n}{g_n^\intercal A^{\rho+1}g_n},
\]
with $\rho\ge0$.
Nonetheless, formulations other than SD and MG are not viewed as promising since extra sparse matrix-vector multiplication is required, which often give similar convergence results but at tremendous computational cost.

\section{Convergence analysis}
\label{sec:4}

For the convergence analysis of the aforementioned methods, recall that a convergence framework has been established in~\cite{Dai2003} which requires a tool called Property A.
\begin{definition}[Property A]
Assume that $A = \text{diag}(\lambda_1,\,\dots,\,\lambda_N)$ and $\lambda_1=1$.
Let $g_{i,n}$ be the $i$th component of $g_n$ and 
\[
G(n,\mu) = \sum_{i=1}^\mu g_{i,n}^2.
\]
If $\exists m_0\in\mathbb{N}$, $\exists c_1,c_2>0$, such that
$\forall\mu\in\{1,\,\dots,\,N-1\}$, $\forall\varepsilon>0$, $\forall j\in\{0,\,\dots,\,\min\{n,m_0\}\}$,
\begin{enumerate}
\item $\lambda_1\le\alpha_n^{-1}\le c_1$;
\item if $G(n-j,\mu)\le\varepsilon$ and $g_{\mu+1,n-j}^2\ge c_2\varepsilon$, then $\alpha_n^{-1}\ge\frac{2}{3}\lambda_{\mu+1}$,
\end{enumerate}
then the steplength $\alpha_n$ has Property A.
\end{definition}
\begin{remark}
The assumption used in the above definition seems to be quite strict in practice.
For the theoretical analysis, however, we could simply add an orthogonal transformation that transforms $A$ to a diagonal matrix of eigenvalues.
Additionally, if $\lambda_1 \neq 1$, we could add a factor $1/\lambda_1$ to the matrix without changing the convergence property.
Hence, we make this assumption in some situations for the sake of convergence analysis exclusively.
\end{remark}
A general convergence result can therefore be deduced.
We state the lemma without proof, see~\cite{Dai2003} for more details.
\begin{lemma}
\label{thm:Dai2003:4.1}
Consider the linear system $Ax=b$ with $A=\text{diag}(\lambda_1,\,\dots,\,\lambda_N)$ and $\lambda_1=1$.
Consider the gradient method \eqref{eq:x} where the steplength $\alpha_n$ has Property A.
Then the sequence $\{\norm{g_n}\}$ converges to zero $R$-linearly.
\end{lemma}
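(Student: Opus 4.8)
The plan is to reduce $R$-linear convergence of $\{\norm{g_n}\}$ to a single geometric estimate of the form $\norm{g_n}^2 \le C\sigma^n\norm{g_0}^2$ with $C>0$ and $\sigma\in(0,1)$; since $R$-linearity is a statement about $\limsup_n\norm{g_n}^{1/n}$, passing from $\norm{g_n}^2$ to $\norm{g_n}$ (with rate $\sqrt\sigma$) is immediate, and $\norm{g_n}\to0$ follows along the way. Because $A=\text{diag}(\lambda_1,\dots,\lambda_N)$, the iteration \eqref{eq:x} decouples into $g_{i,n+1}=(1-\alpha_n\lambda_i)g_{i,n}$, hence $g_{i,n}=\big(\prod_{k=0}^{n-1}(1-\alpha_k\lambda_i)\big)g_{i,0}$. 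The first clause of Property~A together with $\lambda_1=1$ gives $1/c_1\le\alpha_n\le1$, so $0\le1-\alpha_n\lambda_1\le1-1/c_1<1$ and, more generally, $\alpha_n\lambda_i\ge1/c_1$ so that $1-\alpha_n\lambda_i\le1-1/c_1$ for every $i$. The only obstruction to a component contracting is therefore an overshoot $\alpha_n\lambda_i\ge2$, which can occur only for the large eigenvalues and is precisely what the second clause of Property~A is designed to suppress once the leading block $G(n,\mu)$ has become small.

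I would then prove by induction on $\mu\in\{1,\dots,N\}$ that there are constants $C_\mu>0$ and $\sigma_\mu\in(0,1)$ with $G(n,\mu)\le C_\mu\sigma_\mu^{\,n}\norm{g_0}^2$ for all $n$; the case $\mu=N$ is the assertion. The base case $\mu=1$ is immediate from $|g_{1,n}|\le(1-1/c_1)^n|g_{1,0}|$. For the step, assume the bound for $\mu$ and estimate $g_{\mu+1,n}^2$. Fix $n$ and apply the second clause of Property~A with $\varepsilon:=\max_{0\le j\le\min\{n,m_0\}}G(n-j,\mu)$, which by the inductive hypothesis is still geometrically small (losing at most the fixed factor $\sigma_\mu^{-m_0}$). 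Call step $n$ \emph{good} if $g_{\mu+1,n}^2\ge c_2\varepsilon$: then Property~A forces $\alpha_n^{-1}\ge\tfrac23\lambda_{\mu+1}$, i.e.\ $\tfrac1{c_1}\le\alpha_n\lambda_{\mu+1}\le\tfrac32$, whence $|1-\alpha_n\lambda_{\mu+1}|\le q$ with $q:=\max\{\tfrac12,1-\tfrac1{c_1}\}<1$, so $g_{\mu+1,n+1}^2\le q^2 g_{\mu+1,n}^2$; otherwise $g_{\mu+1,n}^2<c_2\varepsilon$ is already controlled by the leading block. Now partition the index axis into maximal runs of good steps and the complementary steps. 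Along a good run $g_{\mu+1}^2$ contracts by the factor $q^2$ each step; a run starting at $n_0\ge1$ is entered from a non-good step, so $g_{\mu+1,n_0-1}^2<c_2\varepsilon_{n_0-1}$ and hence $g_{\mu+1,n_0}^2\le L_{\mu+1}^2c_2\varepsilon_{n_0-1}$, where $L_{\mu+1}:=\max\{\lambda_{\mu+1}-1,\,1-\tfrac1{c_1}\}$ bounds one step's growth (for $n_0=0$ just use $g_{\mu+1,0}^2\le\norm{g_0}^2$); off the runs $g_{\mu+1,n}^2<c_2\varepsilon_n$ directly. Chaining these three estimates gives $g_{\mu+1,n}^2\le C'\big(\max\{q^2,\sigma_\mu\}\big)^n\norm{g_0}^2$, and adding this to $G(n,\mu)$ closes the induction with $\sigma_{\mu+1}=\max\{q^2,\sigma_\mu\}$ (and an enlarged constant); since $\sigma_1=(1-1/c_1)^2\le q^2$, in fact $\sigma_N=q^2<1$.

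The main obstacle is the bookkeeping in the inductive step: since the steplength is nonmonotone, $G(\cdot,\mu)$ need not decrease in $n$, so one must work with the windowed maximum $\varepsilon$ and argue carefully at the boundaries between good runs and the remaining steps, verifying that the prefactors $C_\mu$ and rates $\sigma_\mu$ stay controlled as $\mu$ grows to $N$. A subsidiary point is uniformity: the constants $c_1,c_2$ and the horizon $m_0$ supplied by Property~A are fixed once and for all, so every geometric rate and prefactor produced during the induction can be bounded in terms of them and the $\lambda_i$, which is what makes the final $\sigma_N<1$ meaningful. The remaining ingredients — the decoupled recursion, the elementary bounds on $|1-\alpha_n\lambda_i|$, and the passage from $\norm{g_n}^2$ back to $\norm{g_n}$ — are routine and I would not belabor them.
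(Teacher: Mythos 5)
The paper itself gives no proof of this lemma (it is quoted from the cited reference), so there is no in-paper argument to compare against; your outline --- componentwise decoupling, induction on the block size $\mu$, a good/bad dichotomy driven by clause~2 of Property~A, and geometric chaining along maximal runs of good steps --- is essentially the standard route for this result. Most of the estimates you sketch are sound: $1/c_1\le\alpha_n\le 1$, the bound $\abs{1-\alpha_n\lambda_{\mu+1}}\le\max\{\lambda_{\mu+1}-1,\,1-1/c_1\}$, the base case $\mu=1$, and the bookkeeping that converts the run decomposition into $g_{\mu+1,n}^2\le C'\sigma^n\norm{g_0}^2$ with $\sigma=\max\{q^2,\sigma_\mu\}$ all check out.

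There is, however, one genuine gap: you invoke clause~2 of Property~A only through its $j=0$ instance --- ``if $G(n,\mu)\le\varepsilon$ and $g_{\mu+1,n}^2\ge c_2\varepsilon$ then $\alpha_n^{-1}\ge\frac{2}{3}\lambda_{\mu+1}$'' --- after defining $\varepsilon$ as a windowed maximum that you then never exploit. The window $m_0$ exists precisely because the steplengths this lemma must cover are retarded: for BB, or for DGMR with $\tau(n)<n$ (the setting in which the paper actually applies the lemma in Theorem~\ref{thm:Dai2003:P408}), $\alpha_n$ is a Rayleigh quotient of $g_{\tau(n)}$, so the largeness of $g_{\mu+1,n}$ at the \emph{current} index says nothing about $\alpha_n$; correspondingly, the paper's own verification of clause~2 for DGMR only establishes the implication at $j=n-\tau(n)$, not at $j=0$. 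The usable form of clause~2 is therefore: if $G(n-j,\mu)\le\varepsilon$ and $g_{\mu+1,n-j}^2\ge c_2\varepsilon$ hold for \emph{every} $j\in\{0,\dots,\min\{n,m_0\}\}$, then $\alpha_n^{-1}\ge\frac{2}{3}\lambda_{\mu+1}$. Your ``good step'' criterion must be strengthened accordingly (require $g_{\mu+1,n-j}^2\ge c_2\varepsilon_n$ throughout the window), after which the argument survives at the cost of a bounded factor: at a non-good step one only knows $g_{\mu+1,n-j}^2<c_2\varepsilon_n$ for \emph{some} $j\le m_0$, whence $g_{\mu+1,n}^2\le L_{\mu+1}^{2m_0}c_2\varepsilon_n$, and the run-entry estimate picks up $L_{\mu+1}^{2m_0}$ in place of $L_{\mu+1}^{2}$. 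As written, with only the $j=0$ instance, the proof covers SD, MG and AO but not the retarded steplengths for which the lemma is needed.
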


Inspired by the pioneering work of BB, \cite{Friedlander1999} proposed a general framework called gradient method with retards (GMR), but AO can not be directly formalized by such framework.
Given $m$ a positive integer, let $\bar{n} = \max\{0,\,n-m\}$.
A generalization of GMR~\cite{Dai2003} can be defined as follows
\begin{equation}
\label{eq:dgmr}
\alpha_n^\text{DGMR} = \left(\frac{g_{\tau(n)}^\intercal A^{\rho(n)}g_{\tau(n)}}{g_{\tau(n)}^\intercal A^{\rho(n)+\upsilon}g_{\tau(n)}}\right)^\frac{1}{\upsilon},
\end{equation}
with
\[
\tau(n)\in\left\{\bar{n},\,\bar{n}+1,\,\dots,\,n-1,\,n\right\},\quad \rho(n)\in\left\{q_1,\,\dots,\,q_m\right\},\quad q_j \ge 0,\quad \upsilon > 0.
\]
Here, we call it Dai's generalization of GMR (DGMR).
After a further selection of parameters $\rho(n)$ and $\tau(n)$, we observe that SD, MG, BB are both special cases of this framework, as well as many other alternate and cyclic gradient methods~\cite{Dai2003,Dai2003b,Dai2006b}.
The convergence of DGMR is summarized in Theorem~\ref{thm:Dai2003:P408}.
\cite{Dai2003} stated this result without proof.
Here, a complete proof is provided and shall also be exploited later by other theorems.
\begin{theorem}
\label{thm:Dai2003:P408}
Consider the gradient method \eqref{eq:x} with steplength \eqref{eq:dgmr} being used to solve the linear system \eqref{eq:ls}.
Then the sequence $\{x_n\}$ converges to $x_*$ for any starting point $x_0$.
\end{theorem}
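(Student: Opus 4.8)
The plan is to show that the DGMR steplength~\eqref{eq:dgmr} possesses Property~A, and then to read off the conclusion from Lemma~\ref{thm:Dai2003:4.1} together with the identity $g_n = A(x_n-x_*)$. Throughout one may assume, after an orthogonal change of variables and a harmless rescaling, that $A = \text{diag}(\lambda_1,\dots,\lambda_N)$ with $\lambda_1 = 1$ (exactly the normalization in the definition of Property~A), and one may assume $g_n\ne 0$ for all $n$, since otherwise $x_n = x_*$ and the sequence has already terminated at $x_*$.

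First I would record a uniform two-sided bound on the steplength. Writing $g = g_{\tau(n)}$ with components $g_i$, one has $\alpha_n^{-\upsilon} = \sum_i\lambda_i^{\rho(n)+\upsilon}g_i^2 \big/ \sum_i\lambda_i^{\rho(n)}g_i^2$, a convex combination of the numbers $\lambda_i^\upsilon$ with nonnegative weights $\lambda_i^{\rho(n)}g_i^2$ (not all zero, since $\rho(n)\ge 0$ and $g\ne 0$). Hence $\lambda_1^\upsilon\le\alpha_n^{-\upsilon}\le\lambda_N^\upsilon$, and taking $\upsilon$-th roots gives $\lambda_1\le\alpha_n^{-1}\le\lambda_N$ for every $n$; this already supplies the first requirement of Property~A with $c_1=\lambda_N$.

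The substance is the second requirement: if $G(n-j,\mu)\le\varepsilon$ and $g_{\mu+1,n-j}^2\ge c_2\varepsilon$ for an admissible $j$, then $\alpha_n^{-1}\ge\tfrac23\lambda_{\mu+1}$. I would take $m_0 = m$, so that the retard index $\tau(n)$ itself is one of the iterates $n-j$ with $0\le j\le\min\{n,m_0\}$. For that particular instance $j$ with $n-j=\tau(n)$ the estimate is direct: splitting the sums defining $\alpha_n^{-\upsilon}$ at index $\mu$, discarding the low block in the numerator, and using $\lambda_i^{\rho(n)+\upsilon}\ge\lambda_{\mu+1}^{\upsilon}\lambda_i^{\rho(n)}$ on the high block, one gets $\alpha_n^{-\upsilon}\ge\lambda_{\mu+1}^\upsilon\big/(1 + S_{\mathrm{low}}/S_{\mathrm{high}})$ where $S_{\mathrm{low}}=\sum_{i\le\mu}\lambda_i^{\rho(n)}g_i^2\le\lambda_\mu^{\rho(n)}\varepsilon$ and $S_{\mathrm{high}}\ge\lambda_{\mu+1}^{\rho(n)}g_{\mu+1}^2\ge\lambda_{\mu+1}^{\rho(n)}c_2\varepsilon$; since $\rho(n)\ge 0$ and $\lambda_\mu\le\lambda_{\mu+1}$ this forces $S_{\mathrm{low}}/S_{\mathrm{high}}\le 1/c_2$, and choosing $c_2$ so that $1+1/c_2\le(3/2)^\upsilon$ (possible because $\upsilon>0$) yields $\alpha_n^{-1}\ge\tfrac23\lambda_{\mu+1}$. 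For the remaining indices $j$, $\alpha_n$ does not see $g_{n-j}$ directly, so I would transport the hypothesis forward to the iterate $\tau(n)$ through the recursion $g_{i,k+1} = (1-\alpha_k\lambda_i)g_{i,k}$: the steplength bounds give $\abs{1-\alpha_k\lambda_i}\le\max\{1-\lambda_i/\lambda_N,\ \lambda_i/\lambda_1-1\}\le\kappa$ for each of the at most $m$ steps separating $n-j$ from $\tau(n)$, so the low block stays $O(\kappa^{2m}\varepsilon)$ while the $(\mu+1)$-st component stays bounded below, except across the single step at which the factor $1-\alpha_k\lambda_{\mu+1}$ might be small; absorbing these $m$-dependent constants into the choice of $c_2$ closes the verification.

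The main obstacle is precisely this propagation across the retard window: the per-step factors $1-\alpha_k\lambda_i$ are bounded in absolute value but need not be bounded away from zero, so one must track how small components can fail to remain small and choose $c_2$ (and confirm $m_0$) uniformly in $\mu$, in the admissible powers $q_1,\dots,q_m$, and in the eigenvalue spread. Once Property~A is established, Lemma~\ref{thm:Dai2003:4.1} gives $\norm{g_n}\to 0$, and since $A$ is invertible and $g_n = Ax_n - b = A(x_n - x_*)$, we conclude $x_n\to x_*$ for every starting point $x_0$, which is the assertion of the theorem. If one prefers to bypass Property~A, the same conclusion follows from a direct induction on $l$ showing $\sum_{i\le l}g_{i,n}^2\to 0$: the base case $l=1$ is immediate from $0\le 1-\alpha_n\lambda_1\le 1-\lambda_1/\lambda_N<1$, and the inductive step reuses the lower bound on $\alpha_n^{-1}$ to make $\abs{1-\alpha_n\lambda_l}<1$ once the lower components are already small, with the retard handled as above.
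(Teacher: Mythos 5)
Your argument follows the same route as the paper's: verify Property~A for the DGMR steplength and invoke Lemma~\ref{thm:Dai2003:4.1}. The two-sided bound $\lambda_1\le\alpha_n^{-1}\le\lambda_N$ obtained by reading $\alpha_n^{-\upsilon}$ as a convex combination of the $\lambda_i^{\upsilon}$ is equivalent to (and cleaner than) the paper's recursive Rayleigh-quotient estimate, and your treatment of the index $j$ with $n-j=\tau(n)$ --- split the sums at $\mu$, discard the low block in the numerator, bound $S_{\mathrm{low}}/S_{\mathrm{high}}\le 1/c_2$, and choose $c_2$ with $1+1/c_2\le(3/2)^{\upsilon}$ --- is exactly the paper's computation; your $c_2$ is even uniform in $\mu$, whereas the paper's choice \eqref{eq:c2} carries a factor $\lambda_\mu^{\bar q}$. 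The closing reduction from $\norm{g_n}\to 0$ to $x_n\to x_*$ via invertibility of $A$ is fine.

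The one place you diverge is the attempt to propagate the smallness hypothesis from an arbitrary $n-j$ in the retard window forward to $\tau(n)$, and this is where the proposal has a gap --- but it is a gap in a step that should not be there. In Dai's Property~A the hypothesis of condition~2 is imposed simultaneously for all $j\in\{0,\dots,\min\{n,m_0\}\}$ (the quantifier is a conjunction over the window, not a separate implication for each $j$); with $m_0=m$ it therefore holds in particular at the index $j_0$ with $n-j_0=\tau(n)$, and your direct estimate at $\tau(n)$ already finishes the verification. This is precisely what the paper's proof does when it silently substitutes $g_{i,n-j}$ for $g_{i,\tau(n)}$, a substitution that is only legitimate for that $j_0$. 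Under the per-$j$ reading you adopt, the propagation cannot be closed in the way you sketch: the factor $1-\alpha_k\lambda_{\mu+1}$ can be arbitrarily close to zero whenever $\alpha_k^{-1}$ happens to land near $\lambda_{\mu+1}$, which the bounds $\lambda_1\le\alpha_k^{-1}\le\lambda_N$ do not preclude, so $g_{\mu+1,\cdot}^2$ need not stay bounded below across the window and no $m$-dependent enlargement of $c_2$ repairs this; indeed the implication for a single remote $j$ is simply false in general. The fix is to read the quantifiers as in Dai's original formulation and delete the propagation step; with that, your verification is complete and coincides with the paper's.
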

\begin{proof}
Since gradient methods are invariant under orthogonal transformations, we assume without loss of generality that $A=\text{diag}(\lambda_1,\,\dots,\,\lambda_N)$ and $\lambda_1=1$.
Let
\[
R(A,u) = \frac{u^\intercal Au}{u^\intercal u}
\]
be the Rayleigh quotient for non-zero vector $u$.
Let
\[
u_1=A^{(\rho(n)+\upsilon-1)/2}g_{\tau(n)},
\]
it follows that
\[
\alpha_n^\text{DGMR} = \left(\frac{1}{R(A,u_1)}\cdot\frac{g_{\tau(n)}^\intercal A^{\rho(n)}g_{\tau(n)}}{g_{\tau(n)}^\intercal A^{\rho(n)+\upsilon-1}g_{\tau(n)}}\right)^\frac{1}{\upsilon} \le \left(\frac{1}{\lambda_1} \cdot \frac{g_{\tau(n)}^\intercal A^{\rho(n)}g_{\tau(n)}}{g_{\tau(n)}^\intercal A^{\rho(n)+\upsilon-1}g_{\tau(n)}}\right)^\frac{1}{\upsilon}.
\]
Applying this result recursively, one has
\[
\alpha_n^\text{DGMR} \le \left(\frac{1}{\lambda_1^{\upsilon-1}} \cdot \frac{g_{\tau(n)}^\intercal A^{\rho(n)}g_{\tau(n)}}{g_{\tau(n)}^\intercal A^{\rho(n)+1}g_{\tau(n)}}\right)^\frac{1}{\upsilon} \le \frac{1}{\lambda_1}.
\]
It follows from the similar deduction that
\[
\alpha_n^\text{DGMR} \ge \frac{1}{\lambda_N}.
\]
Thus we can choose $c_1 = \lambda_N$, and then the first relationship of Property A trivially follows.
For the second one, we choose $c_2$ of the form
\begin{equation}
\label{eq:c2}
c_2 = \frac{\left(\frac{2}{3}\right)^\upsilon}{1-\left(\frac{2}{3}\right)^\upsilon}\lambda_\mu^{\bar{q}},
\end{equation}
where $\bar{q} = \max_{i\in[1,m_0]}q_i$.
Let $m_0=m$.
For all $\mu\in\{1,\,\dots,\,N-1\}$ and $j\in\{0,\,\dots,\,\min\{n,m_0\}\}$, one obtains that
\[
\begin{split}
\left(\alpha_n^\text{DGMR}\right)^{-1} & = \left(\frac{\sum_{i=1}^N g_{i,\tau(n)}^2 \lambda_i^{\rho(n)+\upsilon}}{\sum_{i=1}^\mu g_{i,\tau(n)}^2 \lambda_i^{\rho(n)} + \sum_{i=\mu+1}^N g_{i,\tau(n)}^2 \lambda_i^{\rho(n)}}\right)^\frac{1}{\upsilon} \\[2pt]
& \ge \left(\frac{\lambda_{\mu+1}^\upsilon \sum_{i=\mu+1}^N g_{i,n-j}^2 \lambda_i^{\rho(n)}}{\lambda_\mu^{\bar{q}} G(n-j,\mu) + \sum_{i=\mu+1}^N g_{i,n-j}^2 \lambda_i^{\rho(n)}}\right)^\frac{1}{\upsilon}.
\end{split}
\]
For all $\varepsilon>0$, suppose that
\[
G(n-j,\mu)\le\varepsilon,\quad g_{\mu+1,n-j}^2\ge c_2\varepsilon.
\]
Then,
\[
\left(\alpha_n^\text{DGMR}\right)^{-1} \ge \left(\frac{\lambda_{\mu+1}^\upsilon g_{\mu+1,n-j}^2}{\lambda_\mu^{\bar{q}} \varepsilon + g_{\mu+1,n-j}^2}\right)^\frac{1}{\upsilon} \ge \left(\frac{c_2}{\lambda_\mu^{\bar{q}} + c_2}\right)^\frac{1}{\upsilon} \lambda_{\mu+1}.
\]
Substituting \eqref{eq:c2} into the above deduction it follows that
\[
\left(\alpha_n^\text{DGMR}\right)^{-1} \ge \frac{2}{3}\lambda_{\mu+1},
\] 
which ensures the second condition of Property A.
Thus, the desired conclusion follows by imposing Lemma~\ref{thm:Dai2003:4.1}.
\end{proof}

Notice that the case of $\rho(n) = 0$, $\tau(n) = k$ and $\upsilon = 2$ recovers the AO steplength.
As a consequence of Theorem~\ref{thm:Dai2003:P408}, the convergence result of AOA can be established.
\begin{theorem}
\label{thm:5}
Consider the linear system \eqref{eq:ls} being solved by AOA.
Then the sequence $\{x_n\}$ converges to $x_*$ for any starting point $x_0$.
\end{theorem}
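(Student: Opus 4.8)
The plan is to show that the AOA steplength \eqref{eq:aoa} has Property~A and then to invoke Lemma~\ref{thm:Dai2003:4.1}. One cannot simply quote Theorem~\ref{thm:Dai2003:P408}, because the scaling factor $\theta$ in $\tilde\alpha_n$ and the repetition branch $\alpha_{n-1}^\text{AOA}$ take AOA outside the DGMR family \eqref{eq:dgmr}; the idea is instead to re-verify Property~A, recycling the estimates already established in the proof of Theorem~\ref{thm:Dai2003:P408}. As there, I would first reduce to $A=\text{diag}(\lambda_1,\dots,\lambda_N)$ with $\lambda_1=1$ by orthogonal invariance and rescaling.

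The first step is a structural remark. Unwinding the recursion in \eqref{eq:aoa}, every AOA steplength is of the form $\alpha_n^\text{AOA}=\eta_n\,\alpha_{m(n)}^\text{AO}$ with $\eta_n\in\{1,\theta\}$ and $m(n)=n-j(n)$, $0\le j(n)\le d_2-1$: if $n\bmod(d_1+d_2)<d_1$ then $\eta_n=1$, $j(n)=0$; if $n\bmod(d_1+d_2)=d_1$ then $\eta_n=\theta$, $j(n)=0$; and for the remaining residues the repeated value is exactly the $\tilde\alpha$ step of the current cycle, so $\eta_n=\theta$ and $j(n)=\bigl(n\bmod(d_1+d_2)\bigr)-d_1\le d_2-1$. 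With this in hand I would fix $m_0=d_2$.

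The second step checks the two conditions of Property~A for this $m_0$. Since $\alpha_m^\text{AO}=\norm{g_m}/\norm{Ag_m}$, the Rayleigh-quotient bounds give $\lambda_1\le(\alpha_m^\text{AO})^{-1}\le\lambda_N$ for all $m$ — this is the $\rho=0$, $\upsilon=2$ specialization of the first part of the proof of Theorem~\ref{thm:Dai2003:P408} — so $(\alpha_n^\text{AOA})^{-1}=\eta_n^{-1}(\alpha_{m(n)}^\text{AO})^{-1}$ lies in $[\lambda_1,\theta^{-1}\lambda_N]$ because $\eta_n^{-1}\ge1$; thus the first condition holds with $c_1=\theta^{-1}\lambda_N$. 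For the second condition I would take $c_2=4/5$, the value that \eqref{eq:c2} returns for $\upsilon=2$ and $\bar q=0$, and apply the inequality chain of that same proof to the gradient $g_{m(n)}=g_{n-j(n)}$: whenever $G(n-j(n),\mu)\le\varepsilon$ and $g_{\mu+1,n-j(n)}^2\ge c_2\varepsilon$, it yields $(\alpha_{m(n)}^\text{AO})^{-1}\ge\frac23\lambda_{\mu+1}$, whence $(\alpha_n^\text{AOA})^{-1}=\eta_n^{-1}(\alpha_{m(n)}^\text{AO})^{-1}\ge\frac23\lambda_{\mu+1}$, again using $\eta_n^{-1}\ge1$. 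Property~A then holds, Lemma~\ref{thm:Dai2003:4.1} gives $R$-linear convergence of $\{\norm{g_n}\}$ to zero, and hence $\{x_n\}\to x_*$ for every $x_0$.

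I expect no deep obstacle: the only genuine point is recognizing that AOA is not a DGMR method, so Property~A must be re-derived rather than cited. Once that is granted, shrinking a step by $\theta<1$ can only enlarge $\alpha_n^{-1}$, which is favorable for the second condition and costs merely the factor $\theta^{-1}$ in $c_1$, and the repetition branch never reaches back more than $d_2-1$ iterations, which is absorbed by the choice $m_0=d_2$. The whole of the work is the bookkeeping of these three branches.
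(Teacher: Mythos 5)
Your proof is correct and follows essentially the same route as the paper: verify Property~A by recycling the DGMR estimates from the proof of Theorem~\ref{thm:Dai2003:P408} (with $c_1=\lambda_N/\theta$ and the $c_2$ of \eqref{eq:c2}), then invoke Lemma~\ref{thm:Dai2003:4.1}. Your explicit bookkeeping of the repetition branch via $m_0=d_2$ and $j(n)\le d_2-1$ is in fact more careful than the paper's two-case treatment, which leaves that branch implicit.
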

\begin{proof}
The first part of AOA equals exactly the AO steplength which satisfies DGMR framework, thus having the Property A.
The second part can be written as follows
\[
\tilde{\alpha}_n = \theta\left(\frac{g_{\tau(n)}^\intercal g_{\tau(n)}}{g_{\tau(n)}^\intercal A^2 g_{\tau(n)}}\right)^\frac{1}{2}.
\]
As seen in the proof of Theorem~\ref{thm:Dai2003:P408}, we obtain that
\[
\lambda_1 < \alpha_n^{-1} \le \frac{\lambda_N}{\theta}.
\]
Therefore, we can choose $c_1 = \lambda_N/\theta$.
For the second condition, we can keep formula \eqref{eq:c2} for $c_2$, which gives the same result as the deduction for DGMR, and thus AOA has Property A.
Then the desired conclusion follows from Lemma~\ref{thm:Dai2003:4.1}.
\end{proof}

For the convergence of MGA and MGC, we can provide similar theorems.
Notice that the analysis of SDA and SDC can be applied here without difficulty since SD and MG share similar properties as discussed in Section~\ref{sec:2} and Theorem~\ref{thm:4}.
\begin{theorem}
\label{thm:6}
Consider the linear system \eqref{eq:ls} being solved by MGA.
Then the sequence $\{x_n\}$ converges to $x_*$ for any starting point $x_0$.
\end{theorem}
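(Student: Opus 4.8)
The plan is to show that the composite steplength $\alpha_n^{\textup{MGA}}$ defined by \eqref{eq:mga} has Property~A, and then invoke Lemma~\ref{thm:Dai2003:4.1} to obtain $R$-linear convergence of $\{\norm{g_n}\}$, from which $x_n\to x_*$ follows because $A$ is nonsingular. As in the proof of Theorem~\ref{thm:Dai2003:P408}, I would first reduce to the case $A=\textup{diag}(\lambda_1,\dots,\lambda_N)$ with $\lambda_1=1$, by orthogonal invariance of gradient methods and rescaling, so that the Property~A framework applies verbatim.

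Next I would treat the three branches of \eqref{eq:mga} separately. When $n\bmod(d_1+d_2)<d_1$ the step is exactly $\alpha_n^{\textup{MG}}$, which is the DGMR steplength \eqref{eq:dgmr} with $\rho(n)=0$, $\tau(n)=n$, $\upsilon=1$; hence the bounds $\lambda_1\le(\alpha_n^{\textup{MG}})^{-1}\le\lambda_N$ and the second Property~A inequality were already established inside the proof of Theorem~\ref{thm:Dai2003:P408} (with $c_2$ given by \eqref{eq:c2}, here with $\bar q=0$, $\upsilon=1$). When $n\bmod(d_1+d_2)=d_1$ the step is $\alpha_n^{\textup{A2}}=(1/\alpha_{n-1}^{\textup{MG}}+1/\alpha_n^{\textup{MG}})^{-1}$; since each reciprocal MG quotient lies in $[\lambda_1,\lambda_N]$, we get $2\lambda_1\le(\alpha_n^{\textup{A2}})^{-1}\le2\lambda_N$, so the first condition of Property~A holds with $c_1=2\lambda_N$. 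For the second condition I would use $\alpha_n^{\textup{A2}}\le\alpha_n^{\textup{MG}}$ and $\alpha_n^{\textup{A2}}\le\alpha_{n-1}^{\textup{MG}}$, so that $(\alpha_n^{\textup{A2}})^{-1}\ge\max\{(\alpha_n^{\textup{MG}})^{-1},(\alpha_{n-1}^{\textup{MG}})^{-1}\}$; whenever the Property~A hypothesis $G(n-j,\mu)\le\varepsilon$, $g_{\mu+1,n-j}^2\ge c_2\varepsilon$ is active at a retarded index $n-j$ that enters one of these MG quotients, the already-verified inequality for that quotient forces $(\alpha_n^{\textup{A2}})^{-1}\ge\frac{2}{3}\lambda_{\mu+1}$. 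Finally, in the ``otherwise'' case $\alpha_n^{\textup{MGA}}=\alpha_{n-1}^{\textup{MGA}}$, which unrolls to the $\alpha^{\textup{A2}}$-value computed at most $d_2-1$ iterations earlier, so the same two bounds persist provided we take $m_0\ge d_1+d_2$, i.e.\ large enough that one full cycle of \eqref{eq:mga} fits inside the retard window.

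Combining the three branches, $\alpha_n^{\textup{MGA}}$ has Property~A with the common constants $c_1=2\lambda_N$, $c_2$ from \eqref{eq:c2}, and $m_0=d_1+d_2$; Lemma~\ref{thm:Dai2003:4.1} then yields that $\{\norm{g_n}\}$ converges to zero $R$-linearly, hence $x_n\to x_*$ for every starting point.

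I expect the delicate point to be the uniform verification of the second Property~A inequality for the A2 and repeated steps over \emph{all} retarded indices $j\in\{0,\dots,\min\{n,m_0\}\}$, not only those for which $g_{n-j}$ literally enters the formula for $\alpha_n^{\textup{MGA}}$: for the remaining $j$ one must relate $G(n-j,\mu)$ and $g_{\mu+1,n-j}^2$ to the corresponding quantities at the indices that do enter, using the recursion $g_{i,n}=(1-\alpha_{n-1}^{\textup{MG}}\lambda_i)g_{i,n-1}$ together with the uniform bound $\alpha_n^{\textup{MG}}\le1/\lambda_1$. This is exactly the bookkeeping carried out for SDA and SDC in~\cite{DeAsmundis2013,DeAsmundis2016} and, in the GMR setting, in~\cite{Dai2003}, and it transfers here without change because, as recorded in Section~\ref{sec:2} and Theorem~\ref{thm:4}, MG and $\alpha^{\textup{A2}}$ play precisely the roles of SD and $\alpha^{\textup{A}}$; once the constants are fixed the remaining estimates are routine.
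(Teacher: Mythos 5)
Your proposal follows essentially the same route as the paper: verify Property~A for the MGA steplength branch by branch, using the sandwich bound $\tfrac{1}{2}\min\{\alpha_{n-1}^{\textup{MG}},\alpha_n^{\textup{MG}}\}\le\alpha_n^{\textup{A2}}\le\min\{\alpha_{n-1}^{\textup{MG}},\alpha_n^{\textup{MG}}\}$ to get $c_1=2\lambda_N$ and reducing the second condition to the Rayleigh-quotient estimate already established in the proof of Theorem~\ref{thm:Dai2003:P408}, then invoke Lemma~\ref{thm:Dai2003:4.1}. The only blemish is a trivial parameter slip --- MG is the DGMR step with $\rho(n)=1$ (hence $\bar q=1$ in \eqref{eq:c2}), not $\rho(n)=0$, which is SD --- and this does not affect the argument.
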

\begin{proof}
This proof follows as before with $m_0=d_2$, $c_1=2\lambda_N$ and $c_2=2$.
For all $j\in\{0,\,\dots,\,\min\{n,m_0\}\}$, let $\alpha_n=\alpha_{n-j+1}^\text{A2}$.
By the fact that
\begin{equation}
\label{eq:6:1}
\frac{1}{2\lambda_N} \le \frac{\min\{\alpha_{n-j}^\text{MG},\,\alpha_{n-j+1}^\text{MG}\}}{2} \le \alpha_n \le \min\{\alpha_{n-j}^\text{MG},\,\alpha_{n-j+1}^\text{MG}\} \le \frac{1}{\lambda_1},
\end{equation}
one can verify that the first property is true.
In addition, since \eqref{eq:6:1} implies that
\[
\alpha_n^{-1} \ge \frac{1}{\min\{\alpha_{n-j}^\text{MG},\,\alpha_{n-j+1}^\text{MG}\}} \ge \frac{1}{\alpha_{n-j}^\text{MG}} = \frac{g_{n-j}^\intercal A^2g_{n-j}}{g_{n-j}^\intercal Ag_{n-j}},
\]
by applying the proof of Theorem~\ref{thm:Dai2003:P408}, it follows that
\[
\alpha_n^{-1} \ge \frac{c_2}{1+c_2}\lambda_{\mu+1}.
\]
Substituting $c_2$ yields the second property.
Thus, the desired conclusion follows from Lemma~\ref{thm:Dai2003:4.1}.
\end{proof}
\begin{theorem}
\label{thm:7}
Consider the linear system \eqref{eq:ls} being solved by MGC.
Then the sequence $\{x_n\}$ converges to $x_*$ for any starting point $x_0$.
\end{theorem}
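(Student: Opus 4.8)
The plan is to imitate the proof of Theorem~\ref{thm:6} line by line, with $\alpha^\text{Y2}$ in place of $\alpha^\text{A2}$. After the usual orthogonal reduction we may assume $A=\textup{diag}(\lambda_1,\dots,\lambda_N)$ and $\lambda_1=1$. At iteration $n$ the MGC steplength is either $\alpha_n^\text{MG}$ --- an MG phase, for which Property~A holds by Theorem~\ref{thm:Dai2003:P408} with $\rho(n)=1$, $\upsilon=1$, $\tau(n)=n$ --- or it equals $\alpha_m^\text{Y2}$ for some $m$ with $n-d_2<m\le n$, the clause $\alpha_{n-1}^\text{MGC}$ merely propagating along a constant phase the Y2 value produced at its start. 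Hence, taking $m_0=d_2$, a constant $c_1$, and $c_2$ as in Theorem~\ref{thm:6}, the whole matter reduces to a two-sided bound
\[
\frac{1}{c_1}\;\le\;\alpha_m^\text{Y2}\;\le\;\min\{\alpha_{m-1}^\text{MG},\,\alpha_m^\text{MG}\}\;\le\;\frac{1}{\lambda_1}
\]
playing the role of~\eqref{eq:6:1}: item~1 of Property~A is then immediate, and item~2 follows exactly as for MGA, since $(\alpha_m^\text{Y2})^{-1}\ge\max\{(\alpha_{m-1}^\text{MG})^{-1},(\alpha_m^\text{MG})^{-1}\}$ and $(\alpha_k^\text{MG})^{-1}=g_k^\intercal A^2g_k/g_k^\intercal Ag_k$ obeys the Rayleigh-quotient estimate from the proof of Theorem~\ref{thm:Dai2003:P408}. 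Lemma~\ref{thm:Dai2003:4.1} then closes the argument.

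To prove the bound, put $s_k=1/\alpha_k^\text{MG}$ and
\[
\Delta_m=\left(s_{m-1}-s_m\right)^2+\frac{4\,g_m^\intercal Ag_m}{\left(\alpha_{m-1}^\text{MG}\right)^2 g_{m-1}^\intercal Ag_{m-1}},
\]
so that $\alpha_m^\text{Y2}=2\bigl(\sqrt{\Delta_m}+s_{m-1}+s_m\bigr)^{-1}$. The second summand of $\Delta_m$ is nonnegative, so $\sqrt{\Delta_m}\ge\abs{s_{m-1}-s_m}$ and $\sqrt{\Delta_m}+s_{m-1}+s_m\ge 2\max\{s_{m-1},s_m\}$, whence $\alpha_m^\text{Y2}\le\min\{\alpha_{m-1}^\text{MG},\alpha_m^\text{MG}\}$; combined with $\alpha_k^\text{MG}\le\alpha_k^\text{SD}\le 1/\lambda_1$ from~\eqref{eq:rls} this gives the upper inequalities. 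For the lower one, bound $\Delta_m$ from above. The Rayleigh quotients give $\lambda_1\le s_k\le\lambda_N$, hence $s_{m-1}+s_m\le 2\lambda_N$ and $(\alpha_{m-1}^\text{MG})^{-2}\le\lambda_N^2$; and because $d_1\ge 1$, every Y2 step is immediately preceded by an MG step, so $g_m=g_{m-1}-\alpha_{m-1}^\text{MG}Ag_{m-1}$ with $\norm{g_m}\le\norm{g_{m-1}}$ (the MG step is non-expansive in the Euclidean norm), and therefore $g_m^\intercal Ag_m\le\lambda_N\norm{g_m}^2\le\lambda_N\norm{g_{m-1}}^2\le\kappa\,g_{m-1}^\intercal Ag_{m-1}$. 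Thus the extra term is at most $4\kappa\lambda_N^2$, $\Delta_m$ is bounded by a constant multiple of $\lambda_N^3$, and $\sqrt{\Delta_m}+s_{m-1}+s_m\le c_1$ for a constant $c_1$ depending only on the eigenvalues, which yields $\alpha_m^\text{Y2}\ge 1/c_1$.

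The one step that is not pure bookkeeping is this uniform control of the discriminant $\Delta_m$ --- i.e.\ taming the square root in the Y2 formula; the two facts that make it elementary are the Rayleigh bounds $\alpha_k^\text{MG}\in[1/\lambda_N,1/\lambda_1]$ and the structural observation that a Y2 step always follows an MG step, so that $g_m^\intercal Ag_m/g_{m-1}^\intercal Ag_{m-1}$ is controlled by $\kappa$ alone. With the two-sided estimate in hand the remainder is identical to the proof of Theorem~\ref{thm:6}: verify the two conditions of Property~A with the above $c_1,c_2,m_0$, then invoke Lemma~\ref{thm:Dai2003:4.1} to conclude that $\{\norm{g_n}\}$ converges to zero $R$-linearly, and hence $x_n\to x_*$ for any starting point $x_0$.
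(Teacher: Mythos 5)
Your proof is correct and follows the paper's skeleton for Theorem~\ref{thm:7} exactly: reduce to Property~A with $m_0=d_2$, establish the upper bound $\alpha_m^{\text{Y2}}\le\min\{\alpha_{m-1}^{\text{MG}},\alpha_m^{\text{MG}}\}\le 1/\lambda_1$ (this is precisely the paper's~\eqref{eq:7:1}, proved the same way), obtain a spectrum-only lower bound on $\alpha_m^{\text{Y2}}$, inherit item~2 of Property~A from the MGA/DGMR computation via $(\alpha_m^{\text{Y2}})^{-1}\ge(\alpha_{n-j}^{\text{MG}})^{-1}$, and close with Lemma~\ref{thm:Dai2003:4.1}. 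The one place you diverge is the control of the discriminant, i.e.\ the bound on $g_m^\intercal Ag_m/g_{m-1}^\intercal Ag_{m-1}$ for a Y2 step following an MG step. The paper computes this ratio exactly as $\frac{(g_{m-1}^\intercal Ag_{m-1})(g_{m-1}^\intercal A^3g_{m-1})}{(g_{m-1}^\intercal A^2g_{m-1})^2}-1$ and invokes the Kantorovich inequality to get the sharp constant $(\lambda_N-\lambda_1)^2/(4\lambda_N\lambda_1)$, yielding~\eqref{eq:7:2}; you instead use the non-expansiveness of the MG step in the Euclidean norm ($\|g_m\|\le\|g_{m-1}\|$, valid since $\alpha=0$ is admissible in the MG minimization) together with the Rayleigh bounds to get the cruder constant $\kappa$. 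Both constants depend only on the spectrum, which is all that Property~A item~1 requires, so nothing is lost; your route avoids Kantorovich at the price of a weaker (but irrelevant) numerical bound, and it makes explicit the structural fact --- also needed implicitly by the paper --- that $d_1\ge1$ guarantees every Y2 step is computed from a genuine MG predecessor.
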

\begin{proof}
Let $m_0=d_2$.
Similar to the proof of Theorem~\ref{thm:6}, for all $j\in\{0,\,\dots,\,\min\{n,m_0\}\}$, we can write $\alpha_n=\alpha_{n-j+1}^\text{Y2}$.
It is clear that
\begin{equation}
\label{eq:7:1}
\alpha_n \le \min\{\alpha_{n-j}^\text{MG},\,\alpha_{n-j+1}^\text{MG}\} \le \frac{1}{\lambda_1}.
\end{equation}
Given $B$ be an SPD matrix, recall that the Kantorovich inequality~\cite{Saad2003} is of the form
\[
\frac{\left(u^\intercal Bu\right) \left(u^\intercal B^{-1}u\right)}{(u^\intercal u)^2} \le \frac{\left(\lambda_\text{max} + \lambda_\text{min}\right)^2}{4\lambda_\text{max}\lambda_\text{min}},\quad \forall u\neq 0.
\]
It follows that
\[
\frac{g_{n-j+1}^\intercal Ag_{n-j+1}}{g_{n-j}^\intercal Ag_{n-j}} = \frac{g_{n-j}^\intercal Ag_{n-j} \cdot g_{n-j}^\intercal A^3g_{n-j}}{\left(g_{n-j}^\intercal A^2g_{n-j}\right)^2}-1 \le \frac{(\lambda_N-\lambda_1)^2}{4\lambda_N\lambda_1},
\]
from which we can obtain that
\begin{equation}
\label{eq:7:2}
\alpha_n \ge 2\left(\sqrt{(\lambda_N-\lambda_1)^2+\kappa(\lambda_N-\lambda_1)^2}+2\lambda_N\right)^{-1}.
\end{equation}
Since the second member is a constant, combining \eqref{eq:7:1} and \eqref{eq:7:2} yields the first property.
Finally, comparing \eqref{eq:7:1} with \eqref{eq:6:1} implies that the second result can be obtained in the same manner as that follows from the proof of Theorem~\ref{thm:6}.
Thus, we arrive at the desired conclusion.
\end{proof}

\section{Numerical experiments}
\label{sec:5}

In this section, we provide numerical experiments for different gradient methods by two types of problems.
The first one is generated randomly by MATLAB and the second one is a two-point boundary value problem.
In both examples, the right-hand side $b$ in system~\eqref{eq:ls} is computed by $b=Ax_*$ where $x_*$ is a random vector such that $x_*\in(-10,\,10)$.
The tests are started from zero vectors and the stopping criterion is fixed with $\norm{g_n} < 10^{-6}\norm{g_0}$.
All experiments are performed using MATLAB R2018b on a machine with Double Intel Core i7 2.8 GHz CPU.

In the first example, we consider the random problem generated by the MATLAB built-in function \texttt{sprandsym}, which has appeared in~\cite{DeAsmundis2013}.
We would like to know the impact of parameters on the convergence behavior of alternate gradient methods.
The plots in Figs.~\ref{fig:1} and~\ref{fig:2} show some examples where AOA, SDC and MGC are used for solving random problems.
\begin{figure}[!t]
\centering
\begin{subfigure}{.50\textwidth}
  \centering
  \includegraphics[width=1.\linewidth]{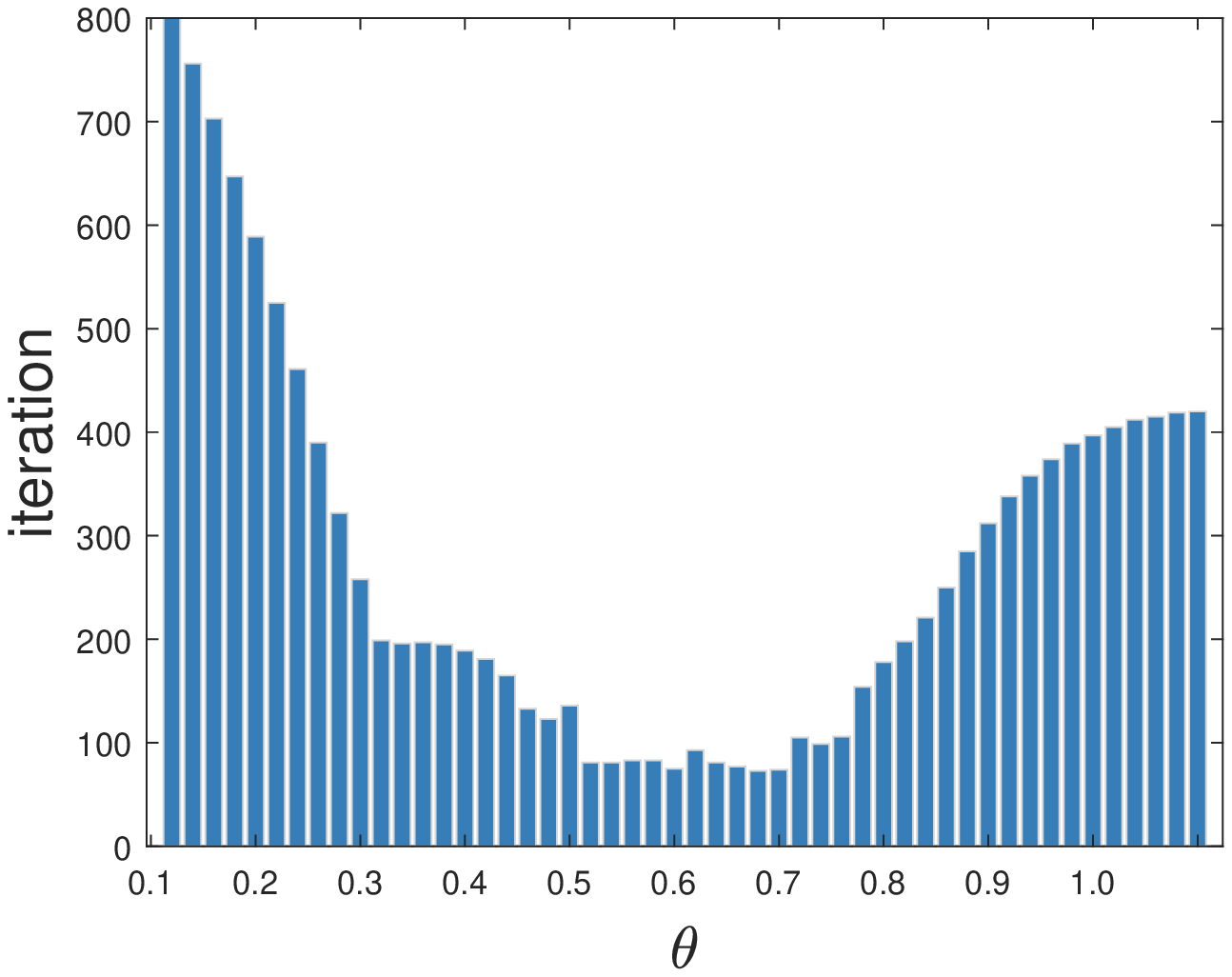}
\end{subfigure}\begin{subfigure}{.50\textwidth}
  \centering
  \includegraphics[width=1.\linewidth]{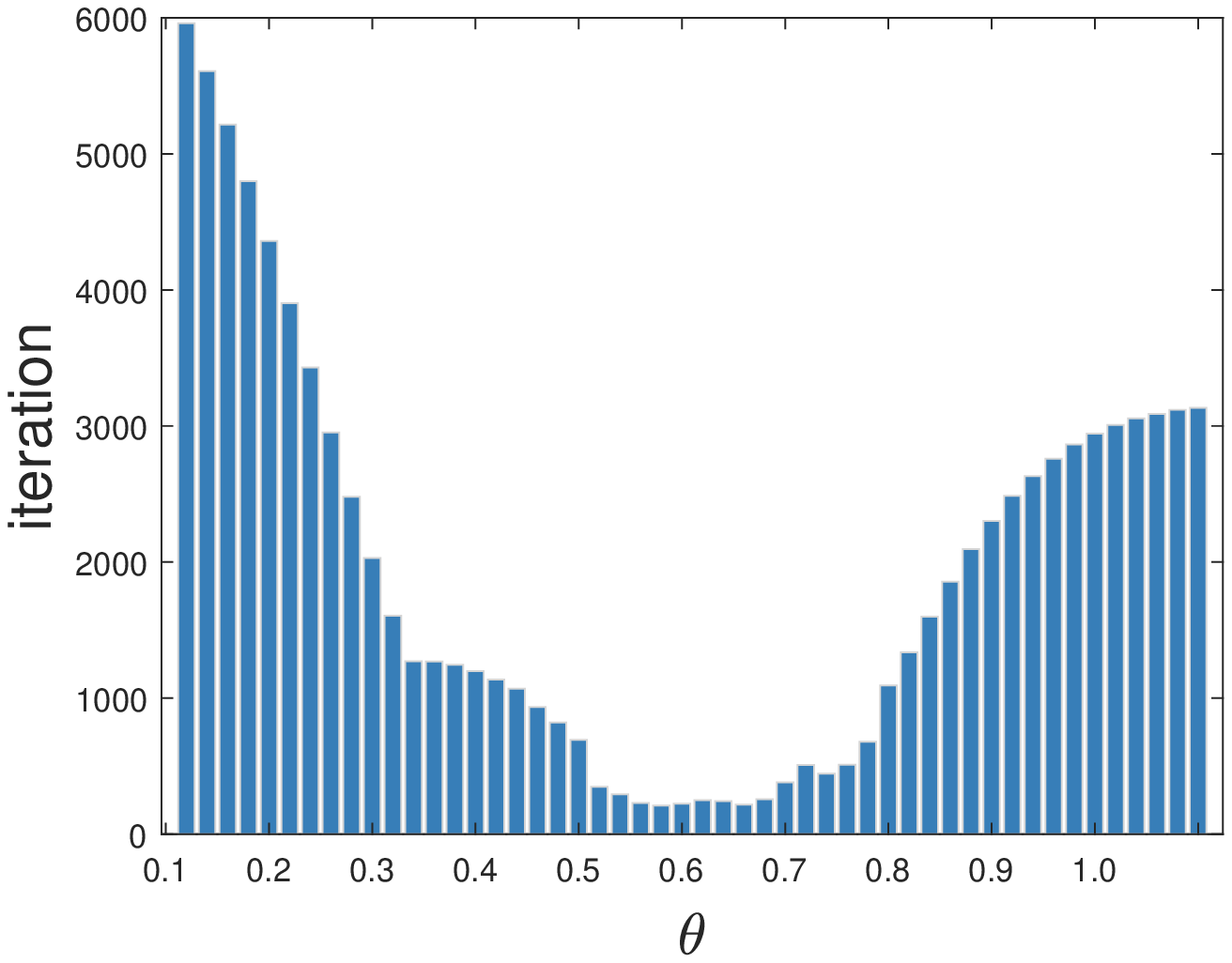}
\end{subfigure}
\caption{Comparison of different $\theta$ in AOA where $d_1=4$ and $d_2=4$. We generate random problems with~$N=100$: $\kappa=10^2$ (left), $\kappa=10^3$ (right).}
\label{fig:1}
\end{figure}
\begin{figure}[!t]
\centering
\begin{subfigure}{.33\textwidth}
  \centering
  \includegraphics[width=1.\linewidth]{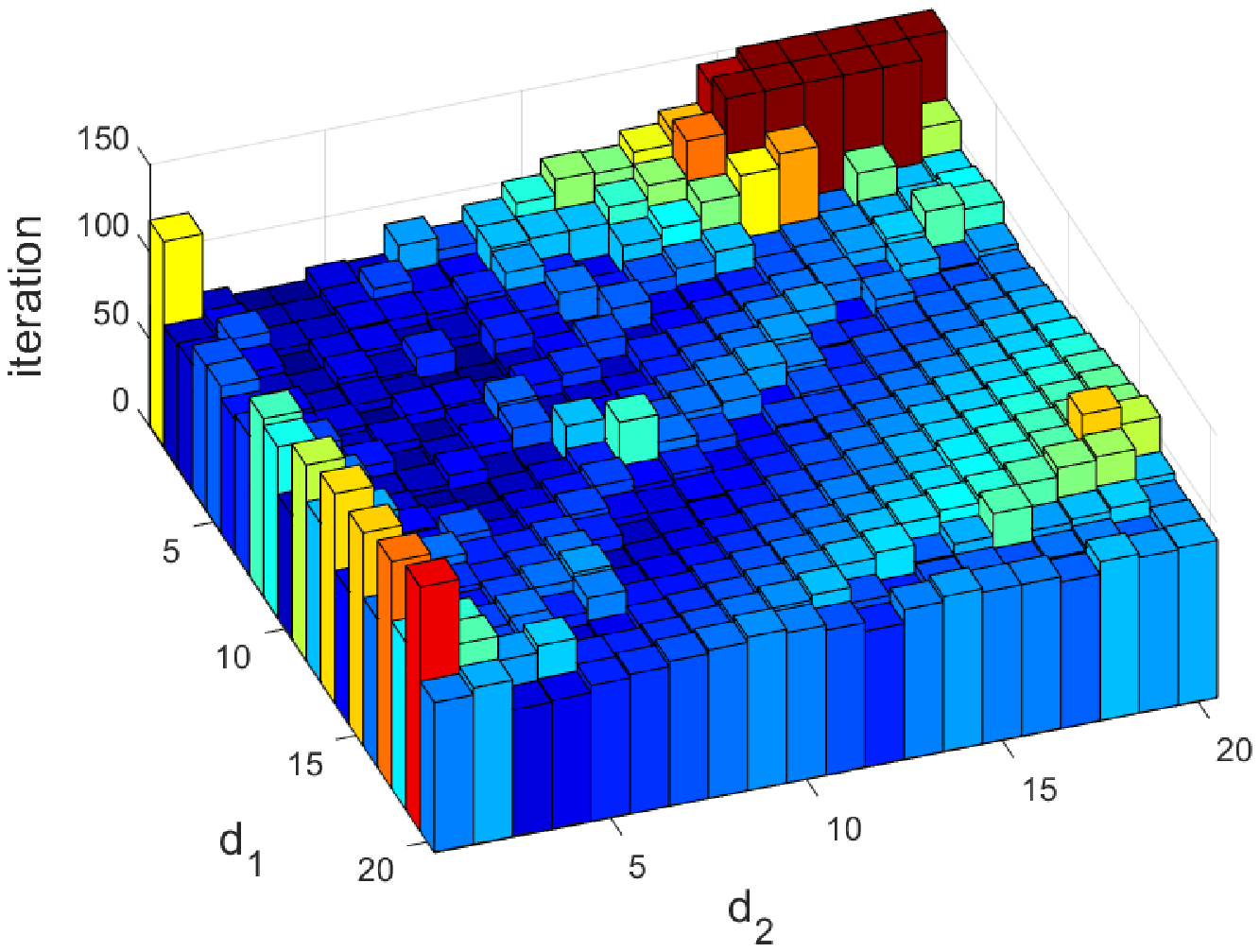}
\end{subfigure}\begin{subfigure}{.33\textwidth}
  \centering
  \includegraphics[width=1.\linewidth]{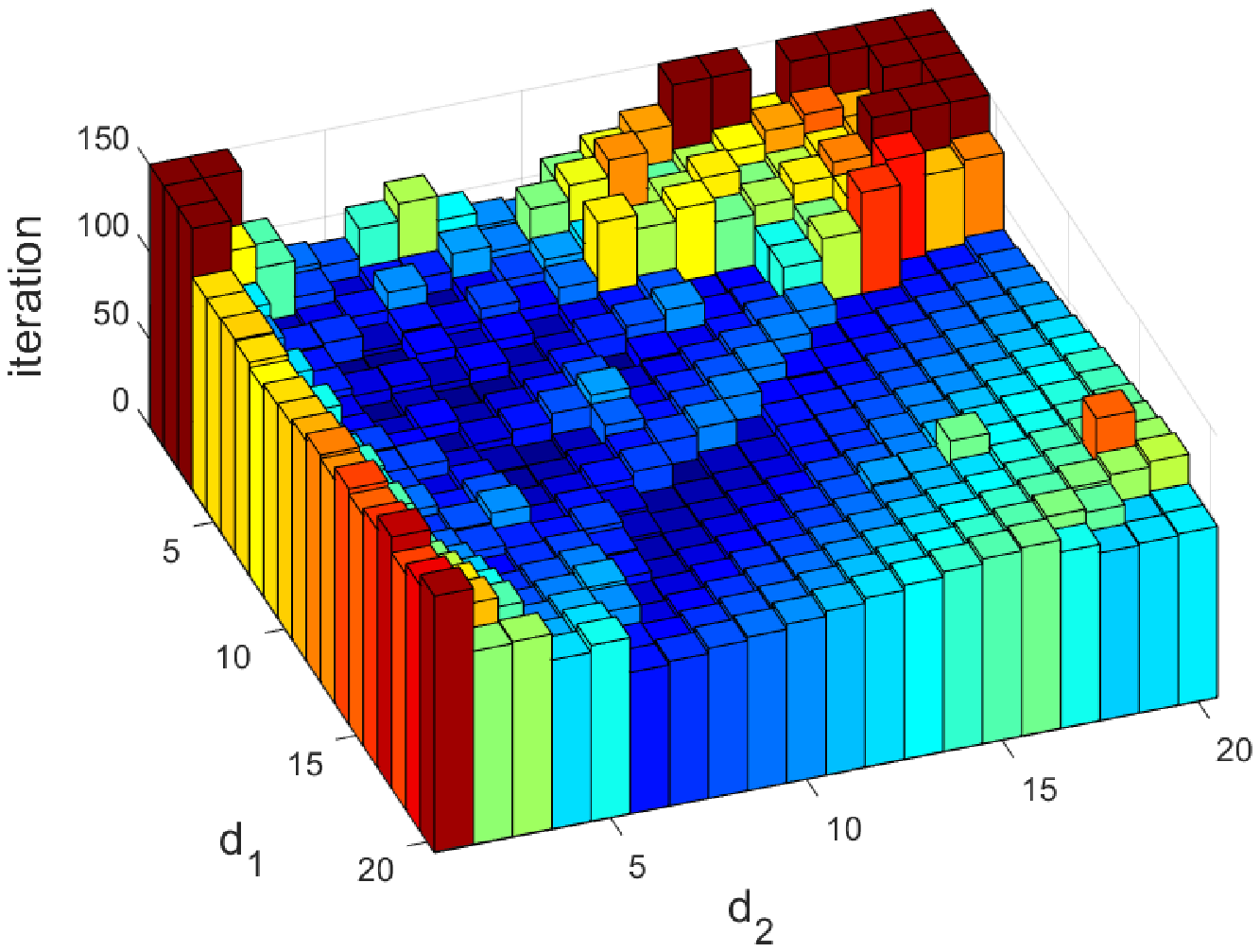}
\end{subfigure}\begin{subfigure}{.33\textwidth}
  \centering
  \includegraphics[width=1.\linewidth]{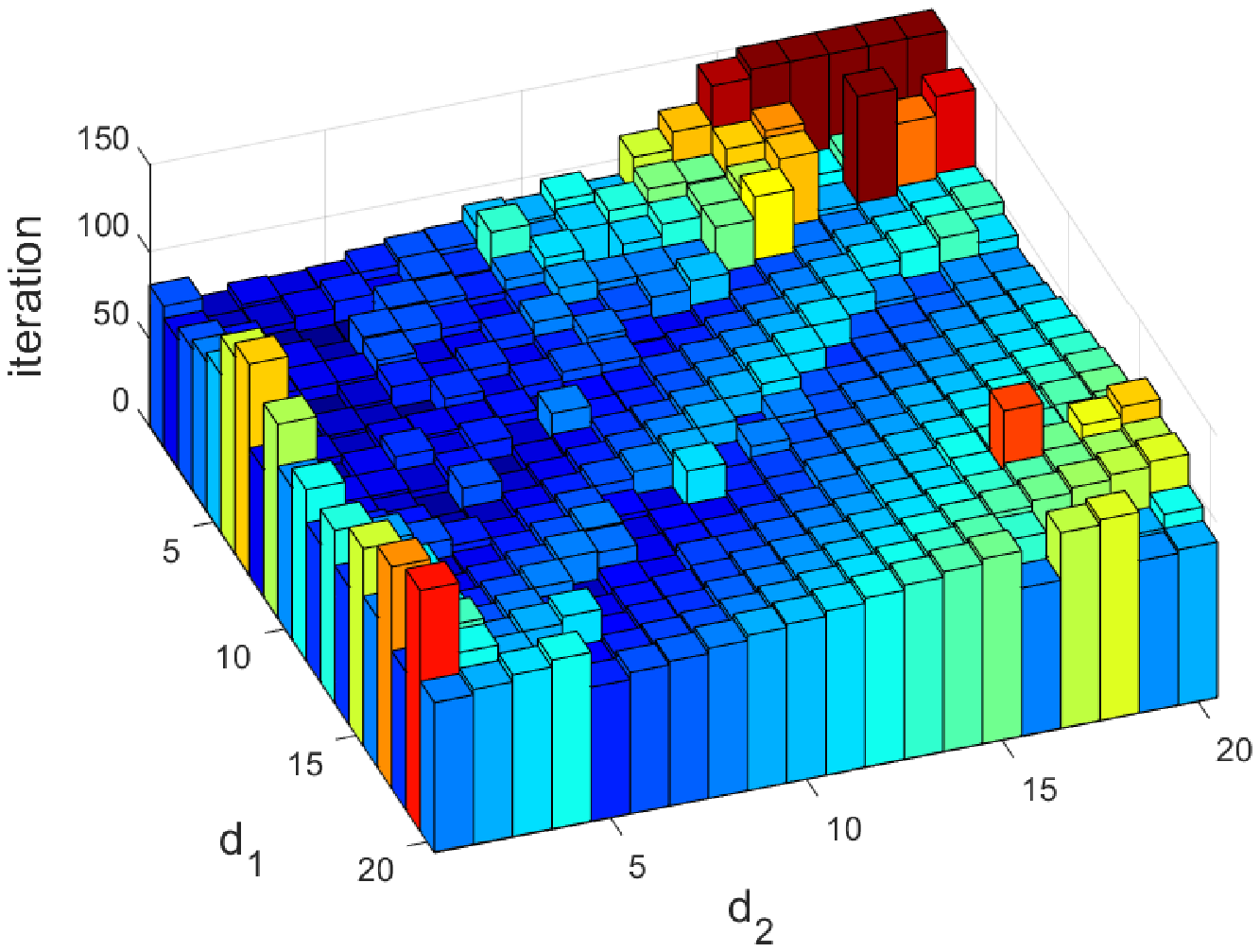}
\end{subfigure}
\begin{subfigure}{.33\textwidth}
  \centering
  \includegraphics[width=1.\linewidth]{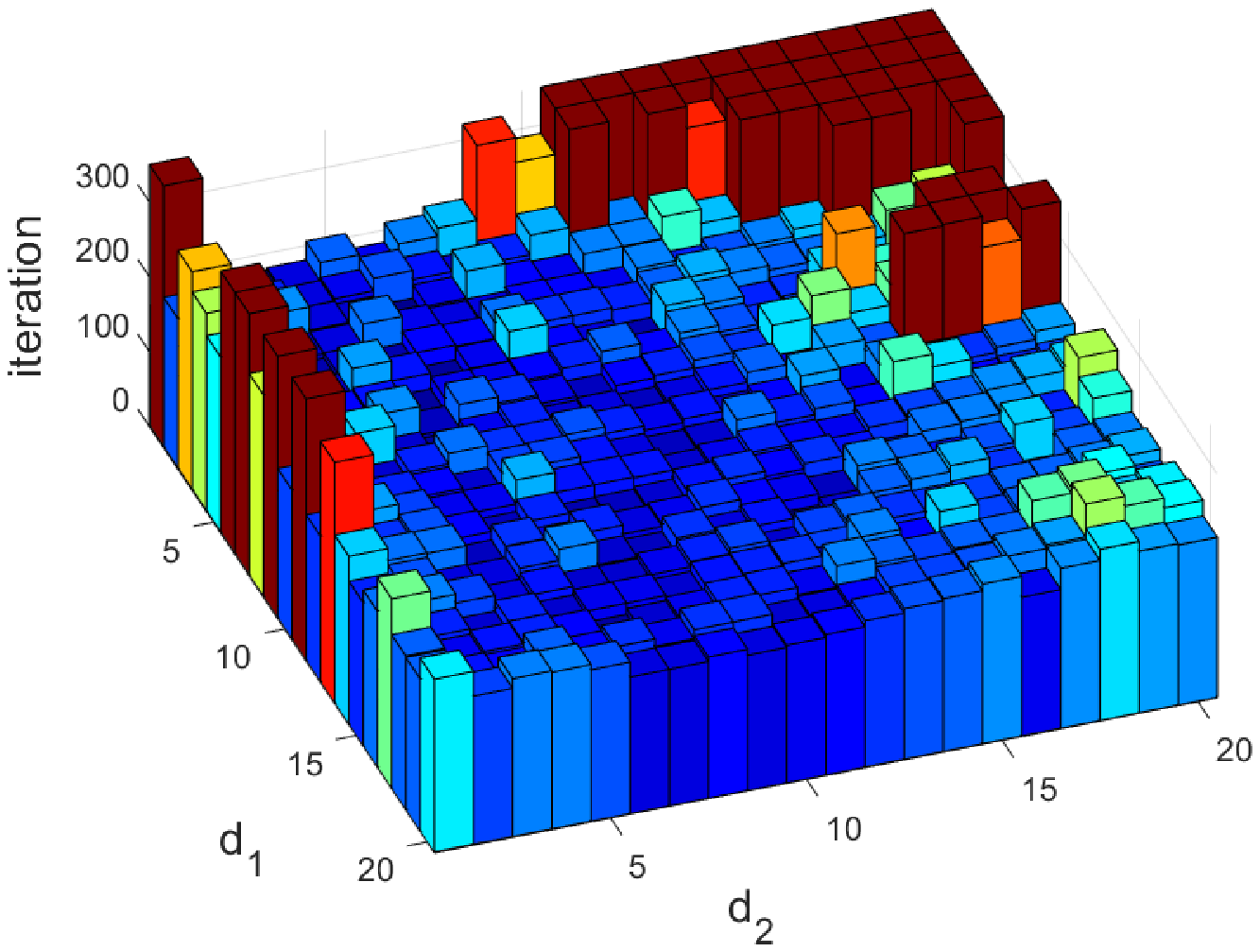}
\end{subfigure}\begin{subfigure}{.33\textwidth}
  \centering
  \includegraphics[width=1.\linewidth]{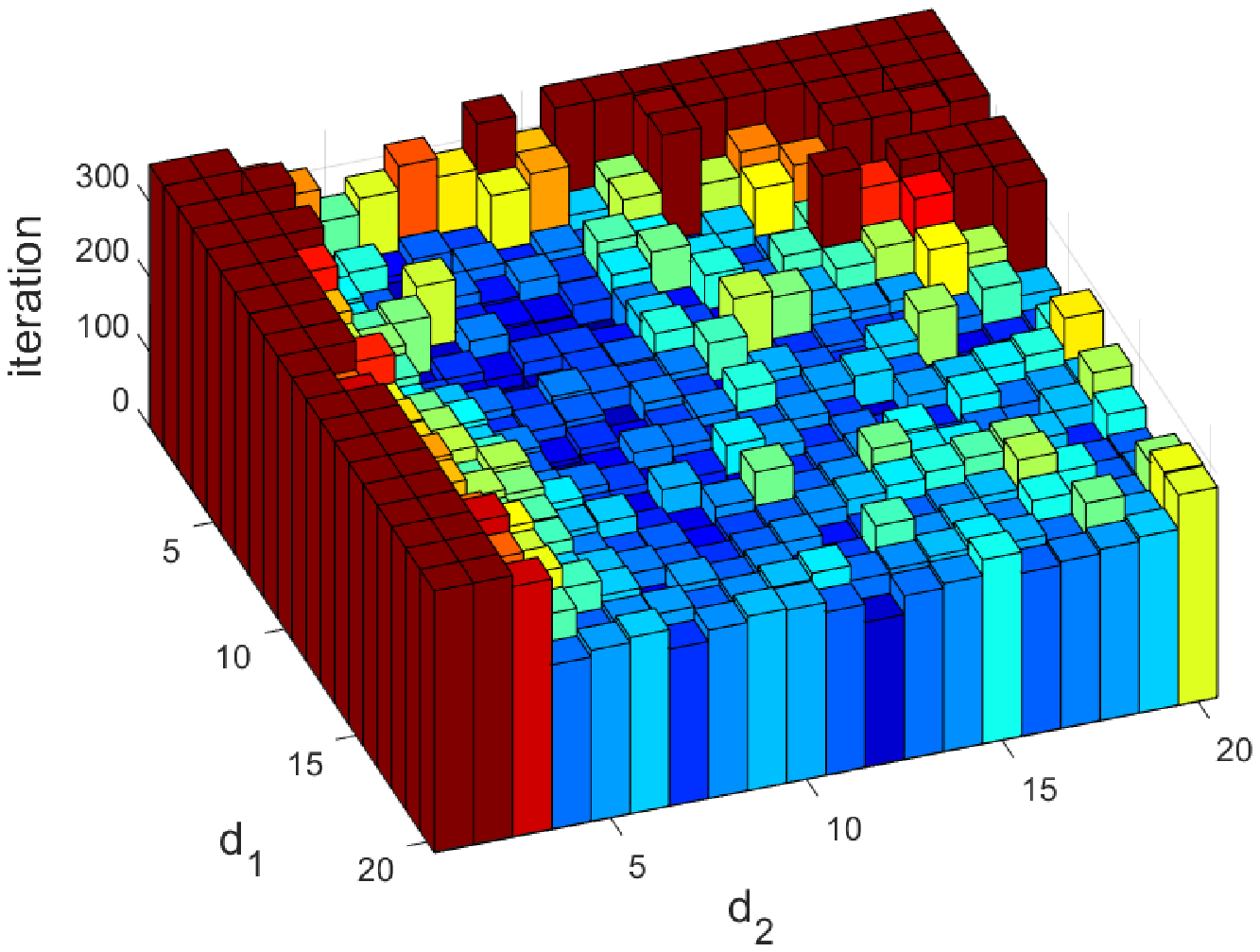}
\end{subfigure}\begin{subfigure}{.33\textwidth}
  \centering
  \includegraphics[width=1.\linewidth]{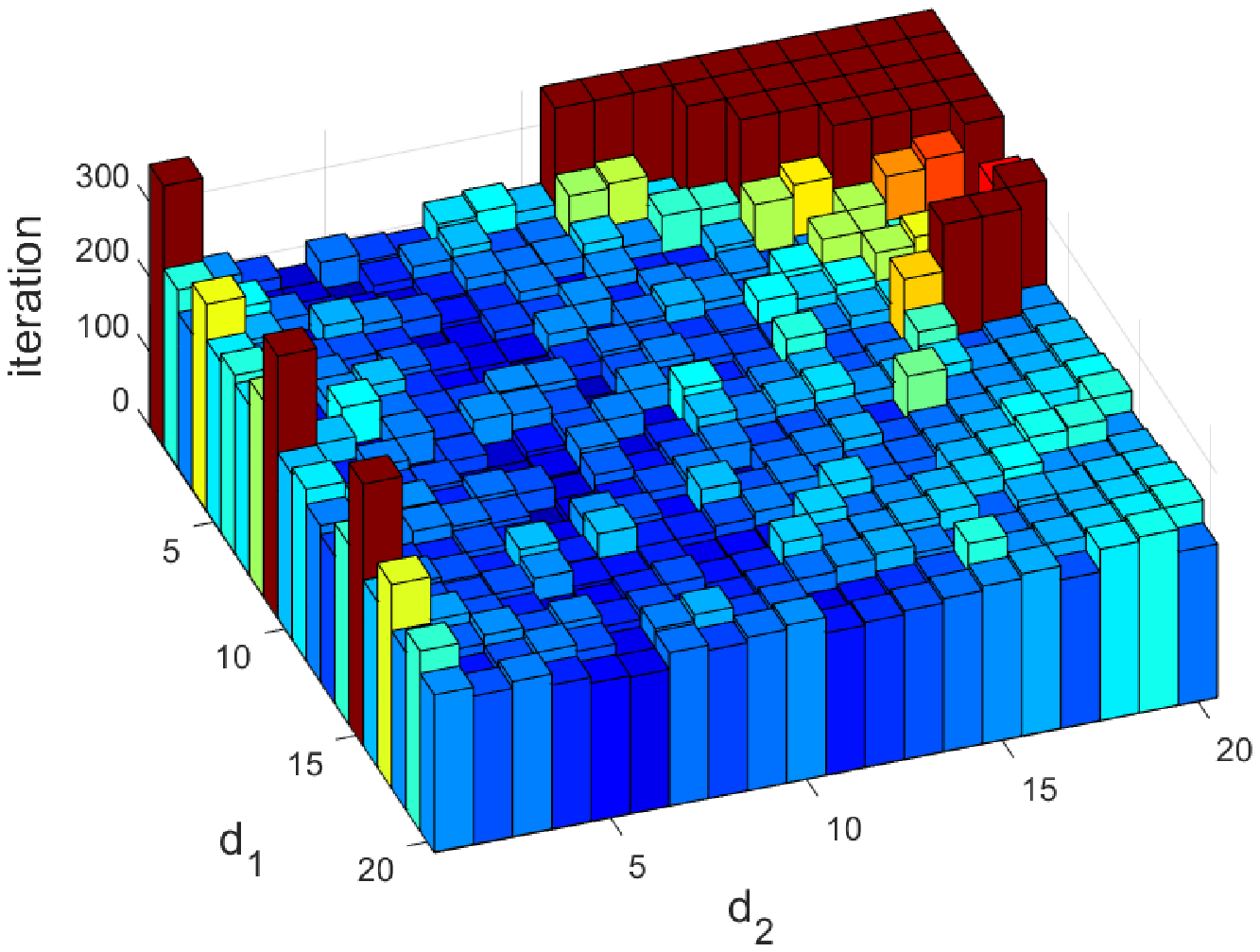}
\end{subfigure}
\caption{Comparison of SDC (left), AOA (center) and MGC (right) through random problems with~$N=100$: $\kappa=10^2$ (top), $\kappa=10^3$ (bottom).}
\label{fig:2}
\end{figure}
Figs.~\ref{fig:1} illustrates the impact of parameter $\theta$ on AOA iterations.
We can see that $\theta\in[0.5,\,0.7]$ leads to the most efficient algorithm.
In Figs.~\ref{fig:2}, we notice that the blue areas illustrate the situation where the choice of parameters leads to fast convergence, while the red ones show the opposite results.
It is convenient to propose an adaptive way to select parameters according to the matrix dimension and the distribution of eigenvalues, but the spectral property is generally unknown to us and obtaining the distribution of eigenvalues is as difficult as solving a linear system.

In the following experiments, we choose $\theta=0.5$ for AOA and $d_1=4$ and $d_2=4$ for all methods since according to Figs.~\ref{fig:1} and~\ref{fig:2} they often produce good results.
Fig.~\ref{fig:3} shows the convergence behaviors of several typical gradient methods.
\begin{figure}[!t]
\centering
\begin{subfigure}{.5\textwidth}
  \centering
  \includegraphics[width=1.\linewidth]{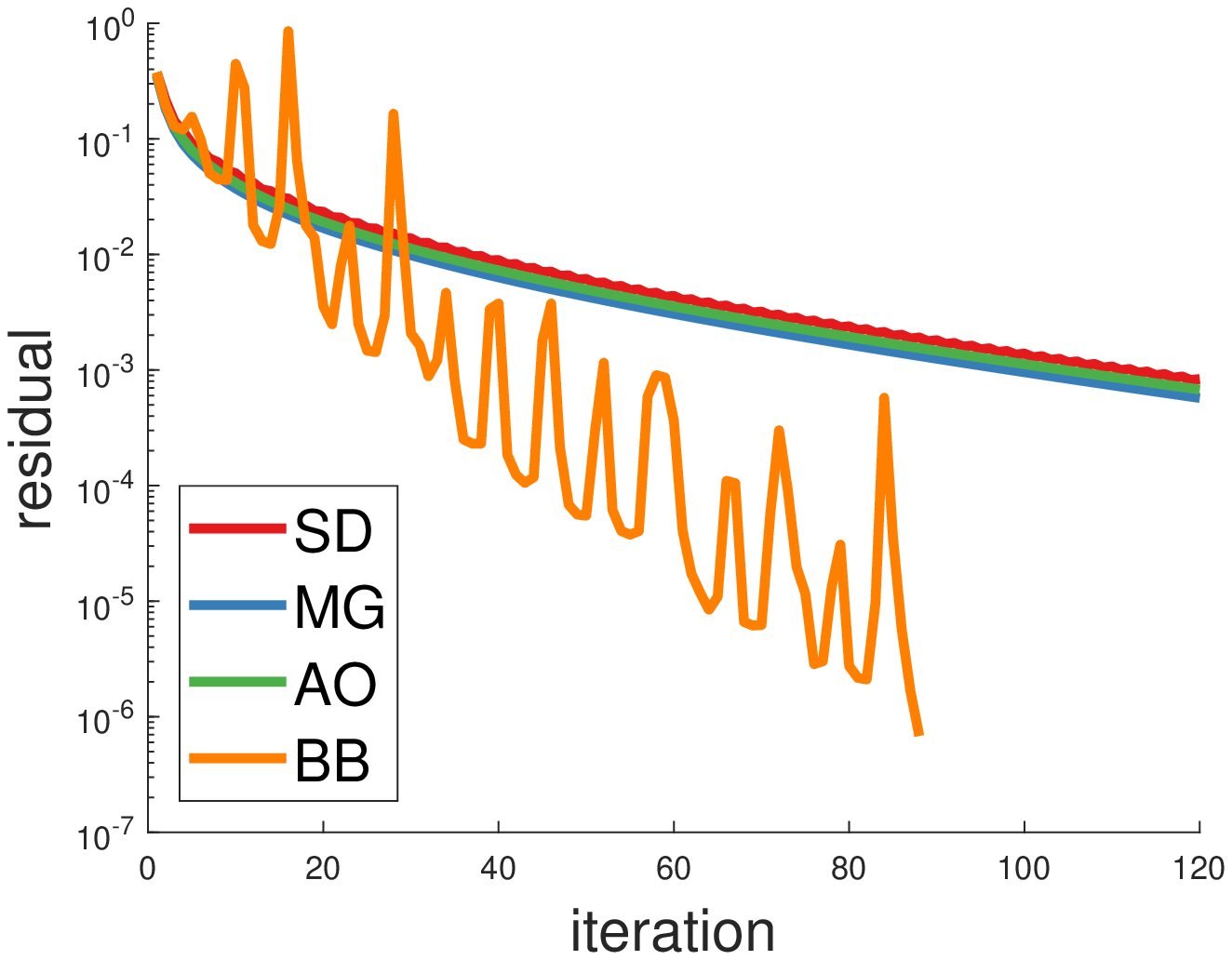}
\end{subfigure}\begin{subfigure}{.5\textwidth}
  \centering
  \includegraphics[width=1.\linewidth]{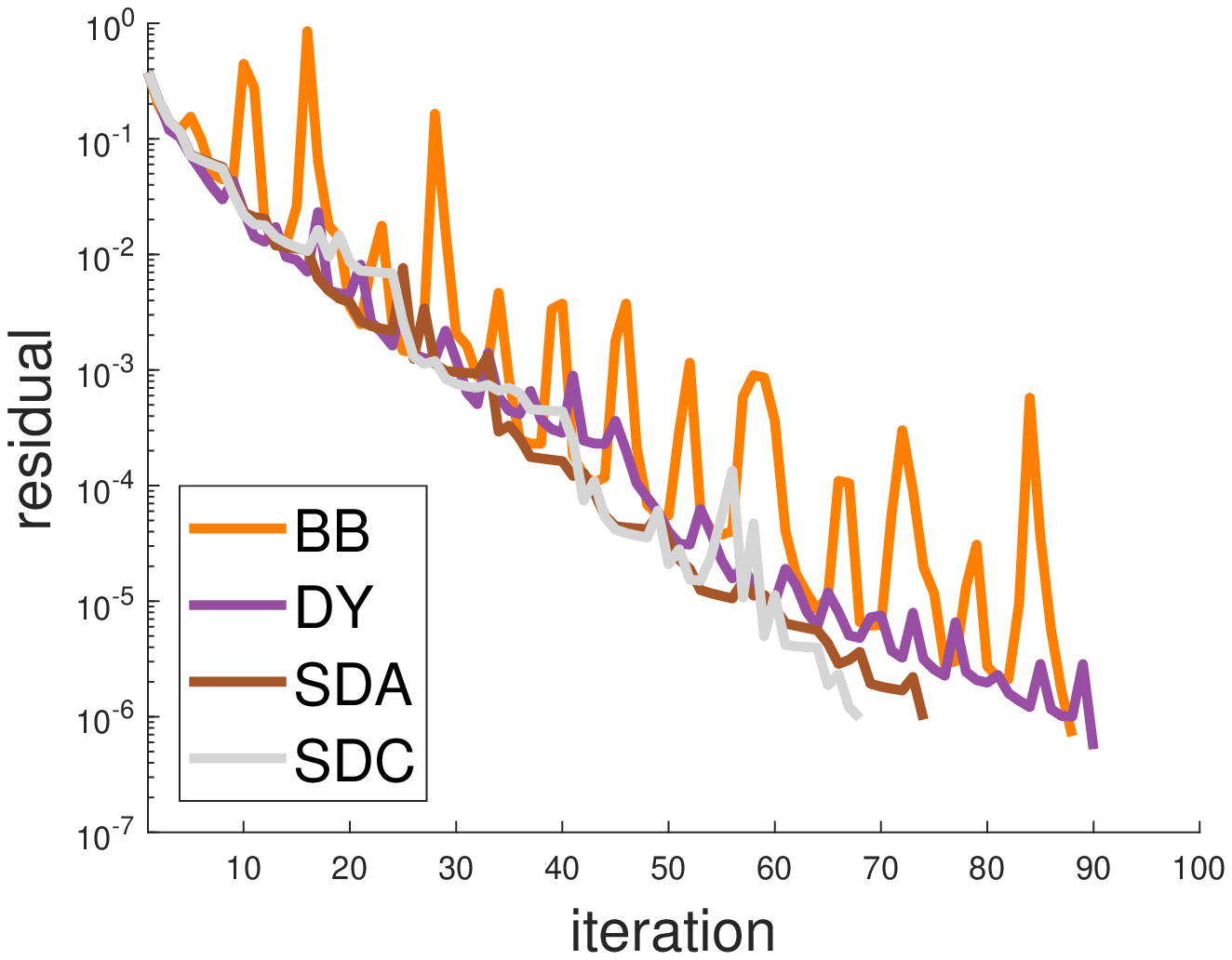}
\end{subfigure}
\begin{subfigure}{.5\textwidth}
  \centering
  \includegraphics[width=1.\linewidth]{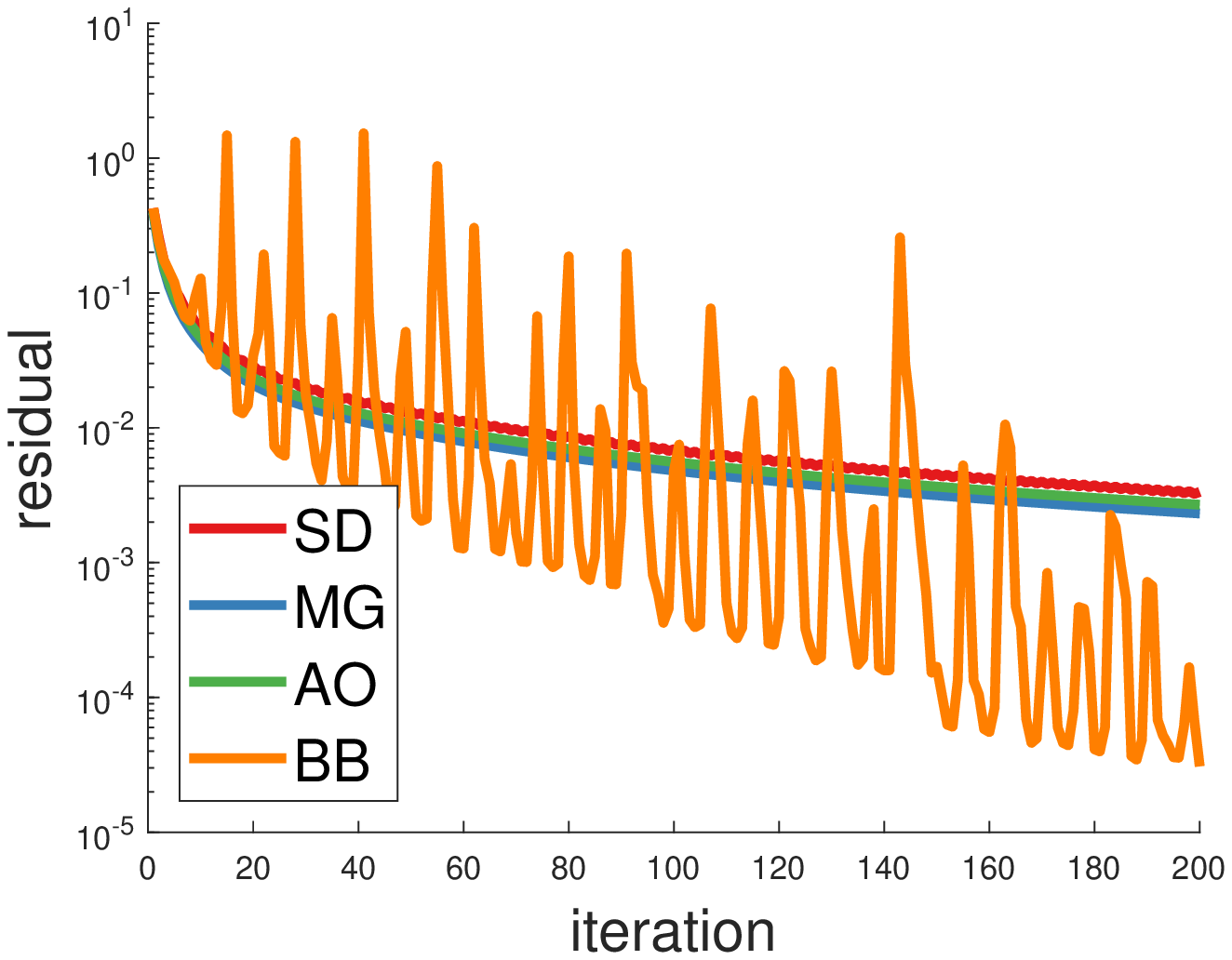}
\end{subfigure}\begin{subfigure}{.5\textwidth}
  \centering
  \includegraphics[width=1.\linewidth]{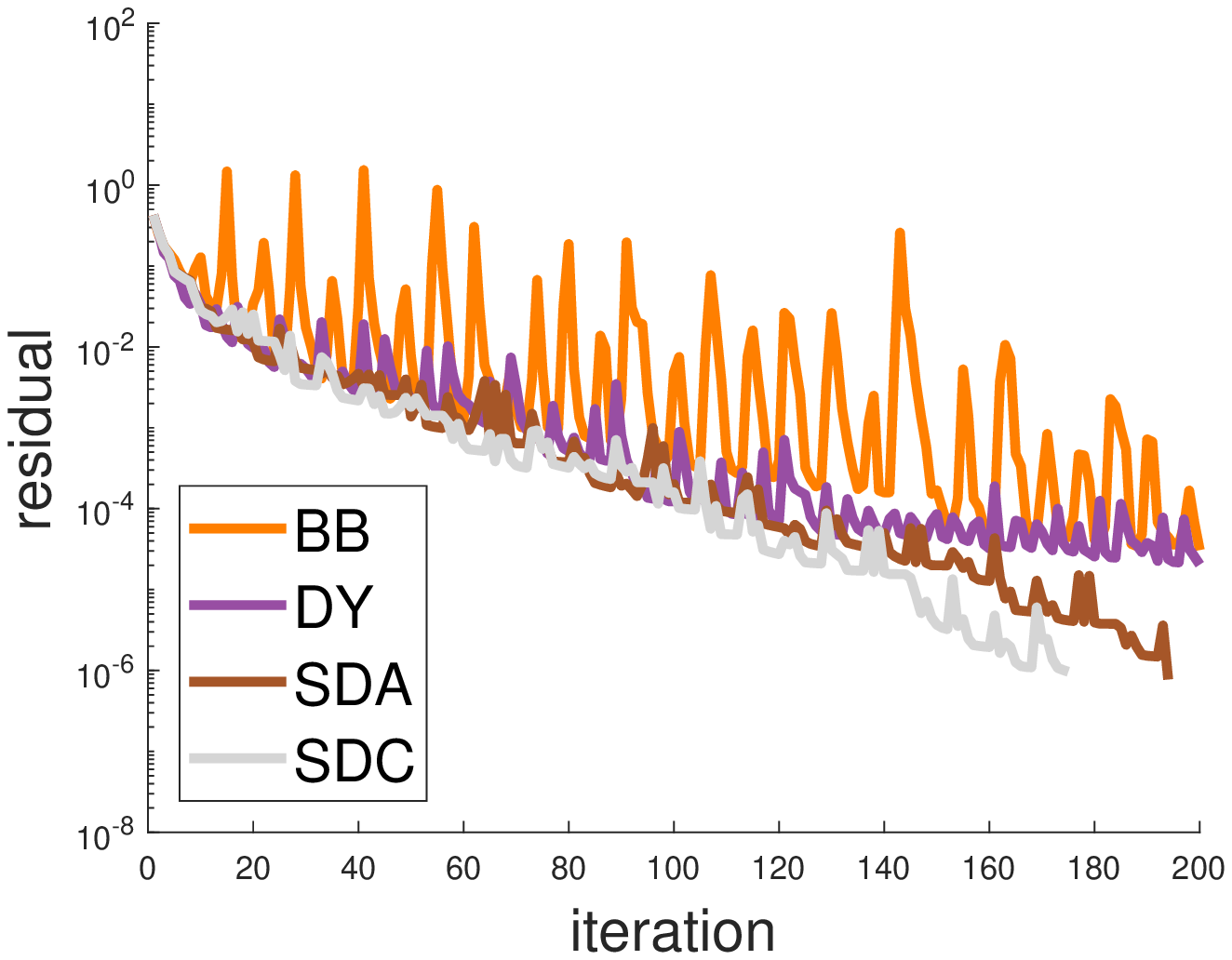}
\end{subfigure}
\caption{Comparison of different gradient methods through random problems: $N=100,\,\kappa=100$ (top), $N=100,\,\kappa=1000$ (bottom).}
\label{fig:3}
\end{figure}
Our tests reveal that the basic methods such as SD, MG and AO are far less efficient than others.
The traditional gradient steps are unrealistic to be used in practice, especially for ill-conditioned problems.
In addition, the convergence results of SDA and SDC are not slower than BB and DY in most cases.
Notice that DY has nonmonotone curve in the residual figure, though it would show monotone behavior when drawing the values of function $f$.

In Table~\ref{tab:1}, we provide the number of iterations required by SDA and SDC as well as the new methods with $\kappa=10^2,\,10^3,\,10^4,\,10^5$ and $N=200,\,400,\,600,\,800,\,1000$.
In all cases, we list only the final average results in the table for which $10$ repeated experiments were conducted to circumvent the extreme conditions.
One finds that SDC and MGC give better results than other three methods.
On the other hand, SDA deteriorates when $\kappa$ becomes larger, and the comparison between AOA and MGA could not lead to a commun conclusion.
This observation is contrary to our expectations, as we speculated that AOA would always have bad performance, due to its twofold asymptotically zigzag behavior, as mentioned in Section~\ref{sec:3}.
Further tests have shown that AOA is more sensitive to the choice of parameters than MGA and MGC.
The problem size seems to be a less critical issue in view of the test results.
\begin{table}[!t]
\caption{The following results are obtained for the problems generated randomly by the MATLAB built-in function \texttt{sprandsym}. In the table we illustrate the average number of iterations among $10$ tests with $d_1=4$ and $d_2=4$ for all methods.}
\small
\begin{tabular}{@{}ccccccc@{}}
\hline
Conditioning & Size & SDA & SDC & AOA & MGA & MGC \\[1pt]
\hline
$\kappa=10^2$ & $N=200$ & 68 & 67 & 80 & 73 & 70 \\[1pt]
& $N=400$ & 70 & 69 & 80 & 73 & 66 \\[1pt]
& $N=600$ & 73 & 72 & 83 & 73 & 73 \\[1pt]
& $N=800$ & 71 & 74 & 81 & 73 & 74 \\[1pt]
& $N=1000$ & 70 & 76 & 80 & 74 & 75 \\ [2pt]
$\kappa=10^3$ & $N=200$ & 199 & 177 & 197 & 209 & 187 \\[1pt]
& $N=400$ & 201 & 187 & 222 & 216 & 190 \\[1pt]
& $N=600$ & 199 & 195 & 226 & 205 & 181 \\[1pt]
& $N=800$ & 191 & 185 & 232 & 207 & 181 \\[1pt]
& $N=1000$ & 194 & 182 & 227 & 209 & 190 \\ [2pt]
$\kappa=10^4$ & $N=200$ & 614 & 479 & 571 & 536 & 507 \\[1pt]
& $N=400$ & 648 & 506 & 525 & 525 & 501 \\[1pt]
& $N=600$ & 602 & 497 & 560 & 540 & 490 \\[1pt]
& $N=800$ & 626 & 484 & 534 & 536 & 509 \\[1pt]
& $N=1000$ & 619 & 475 & 547 & 515 & 488 \\ [2pt]
$\kappa=10^5$ & $N=200$ & 1300 & 1118 & 1246 & 1225 & 1153 \\[1pt]
& $N=400$ & 1318 & 1176 & 1393 & 1299 & 1126 \\[1pt]
& $N=600$ & 1374 & 1228 & 1255 & 1253 & 1231 \\[1pt]
& $N=800$ & 1390 & 1190 & 1452 & 1269 & 1169 \\[1pt]
& $N=1000$ & 1381 & 1273 & 1490 & 1321 & 1251 \\[2pt]
\hline
\end{tabular}
\label{tab:1}
\end{table}

To show the correctness of our analysis, particularly, the comparisons between the aligned methods and the basic gradient methods are illustrated in Fig.~\ref{fig:4}.
\begin{figure}[!t]
\centering
\begin{subfigure}{.25\textwidth}
  \centering
  \includegraphics[width=1.\linewidth]{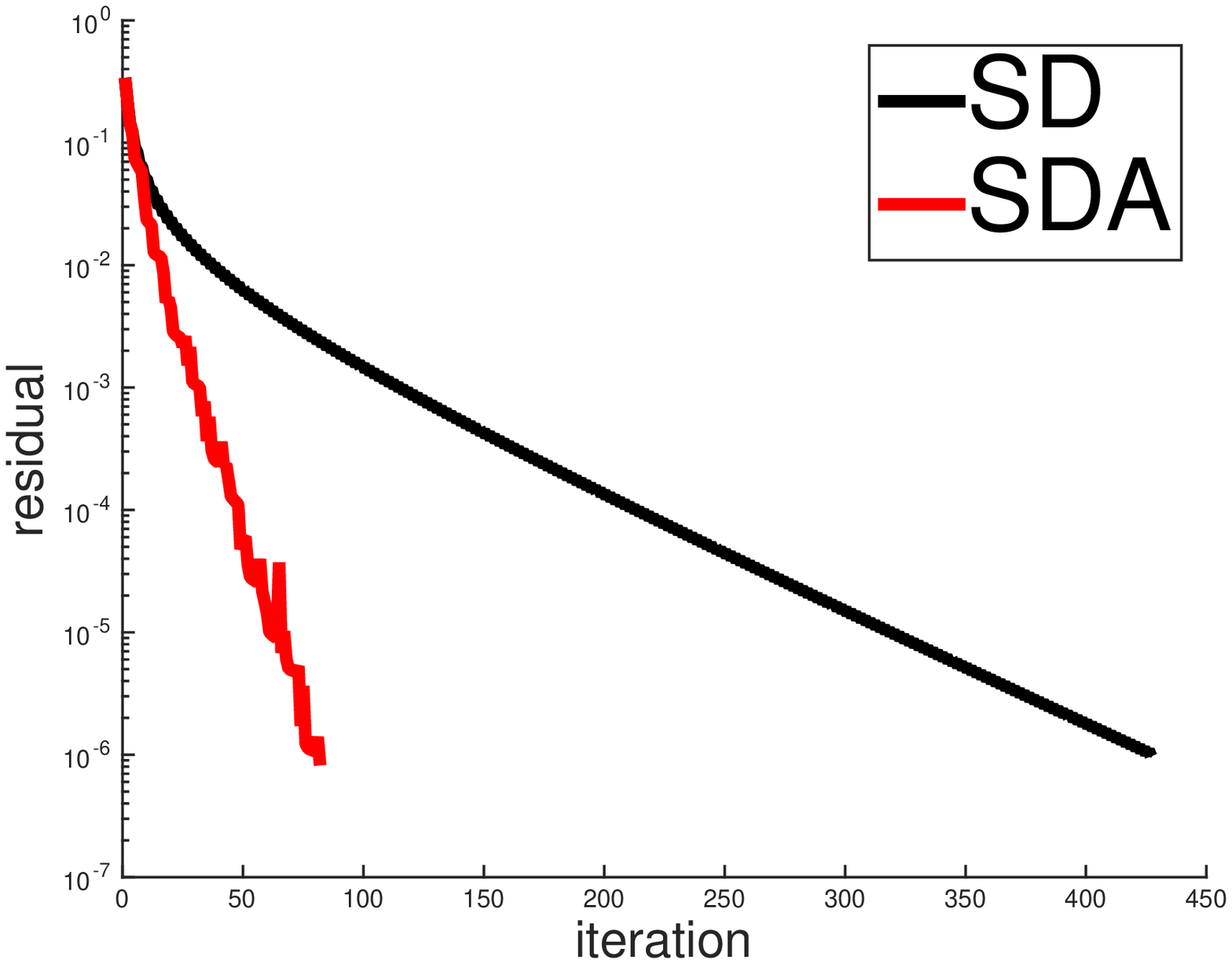}
\end{subfigure}\begin{subfigure}{.25\textwidth}
  \centering
  \includegraphics[width=1.\linewidth]{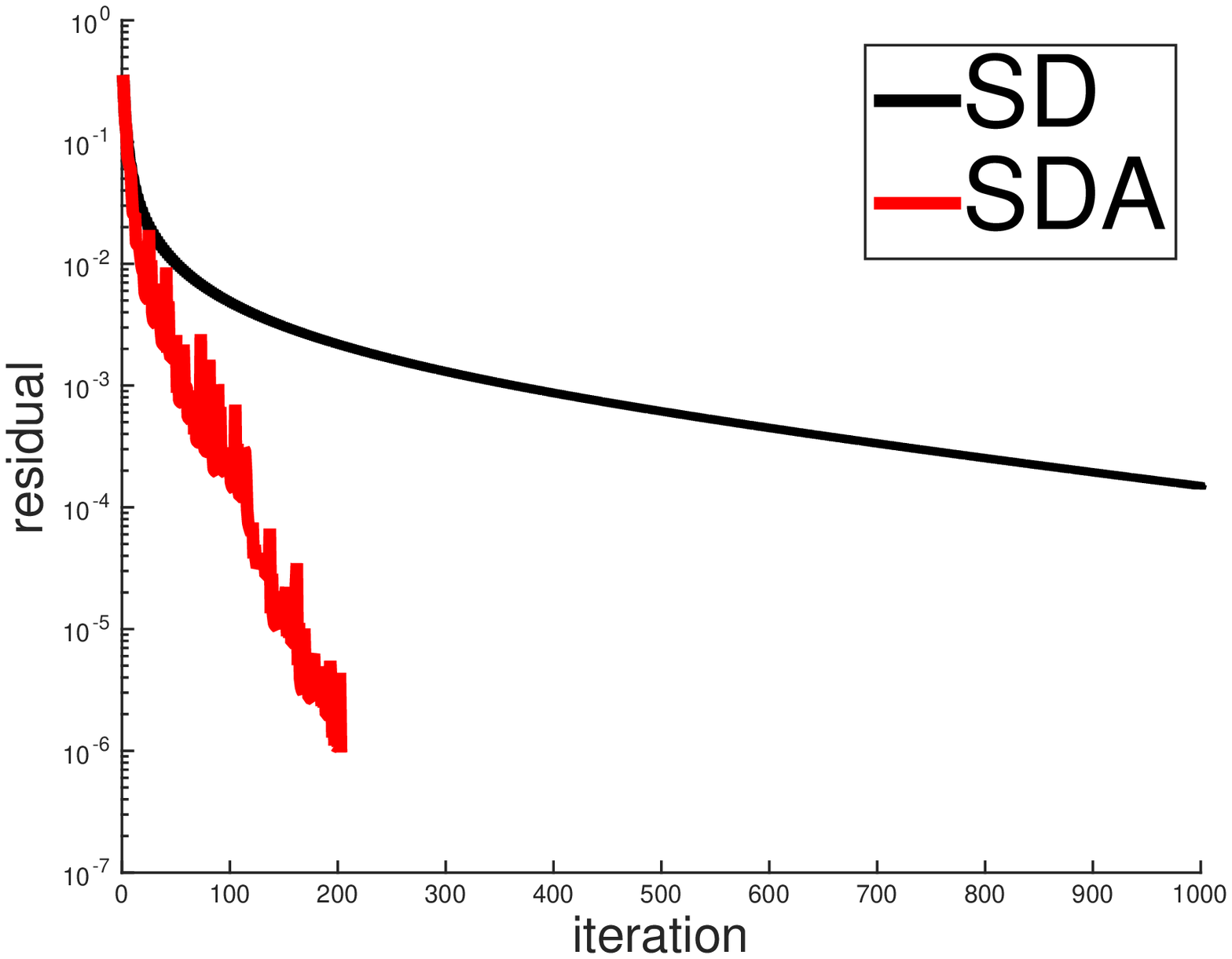}
\end{subfigure}\begin{subfigure}{.25\textwidth}
  \centering
  \includegraphics[width=1.\linewidth]{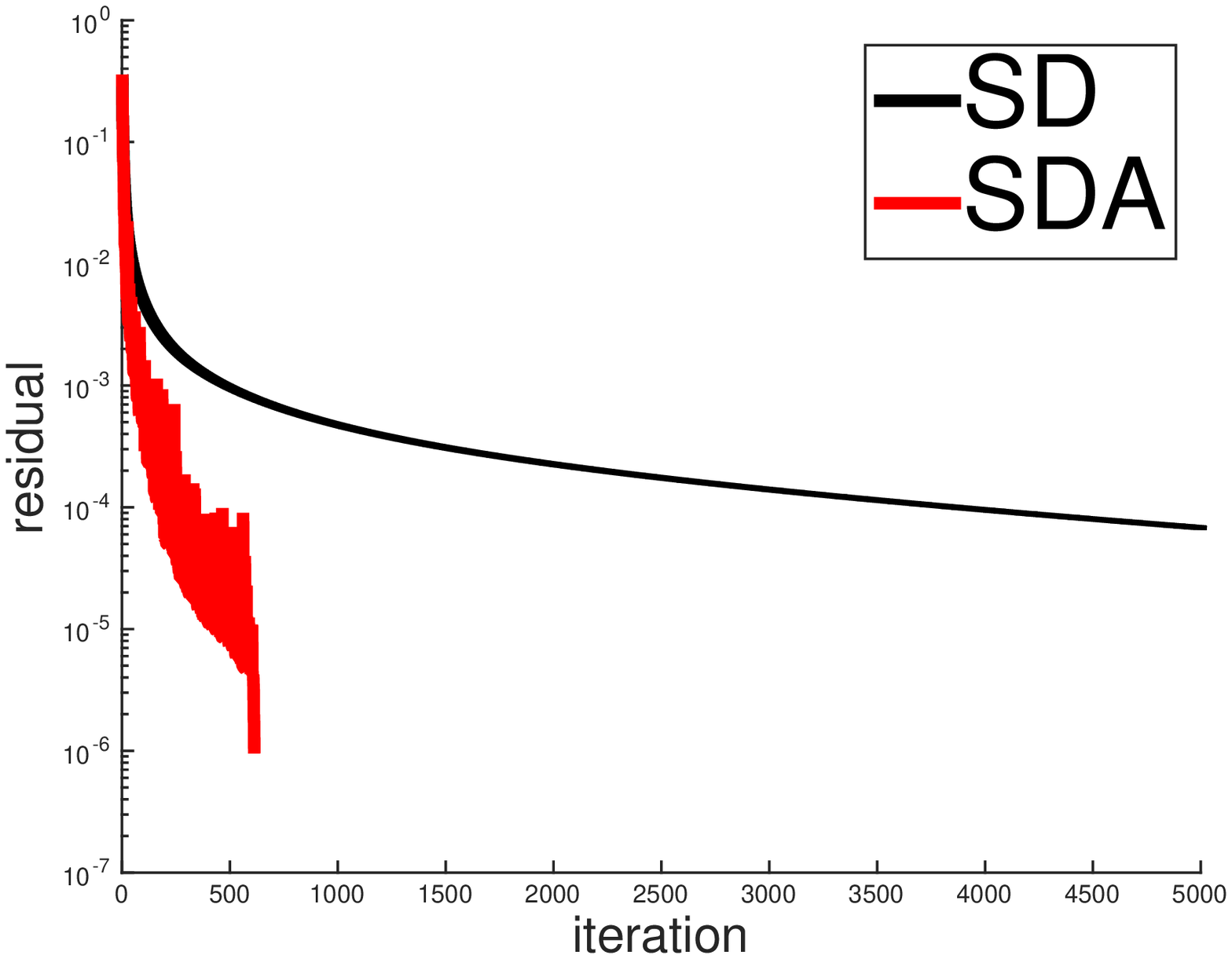}
\end{subfigure}\begin{subfigure}{.25\textwidth}
  \centering
  \includegraphics[width=1.\linewidth]{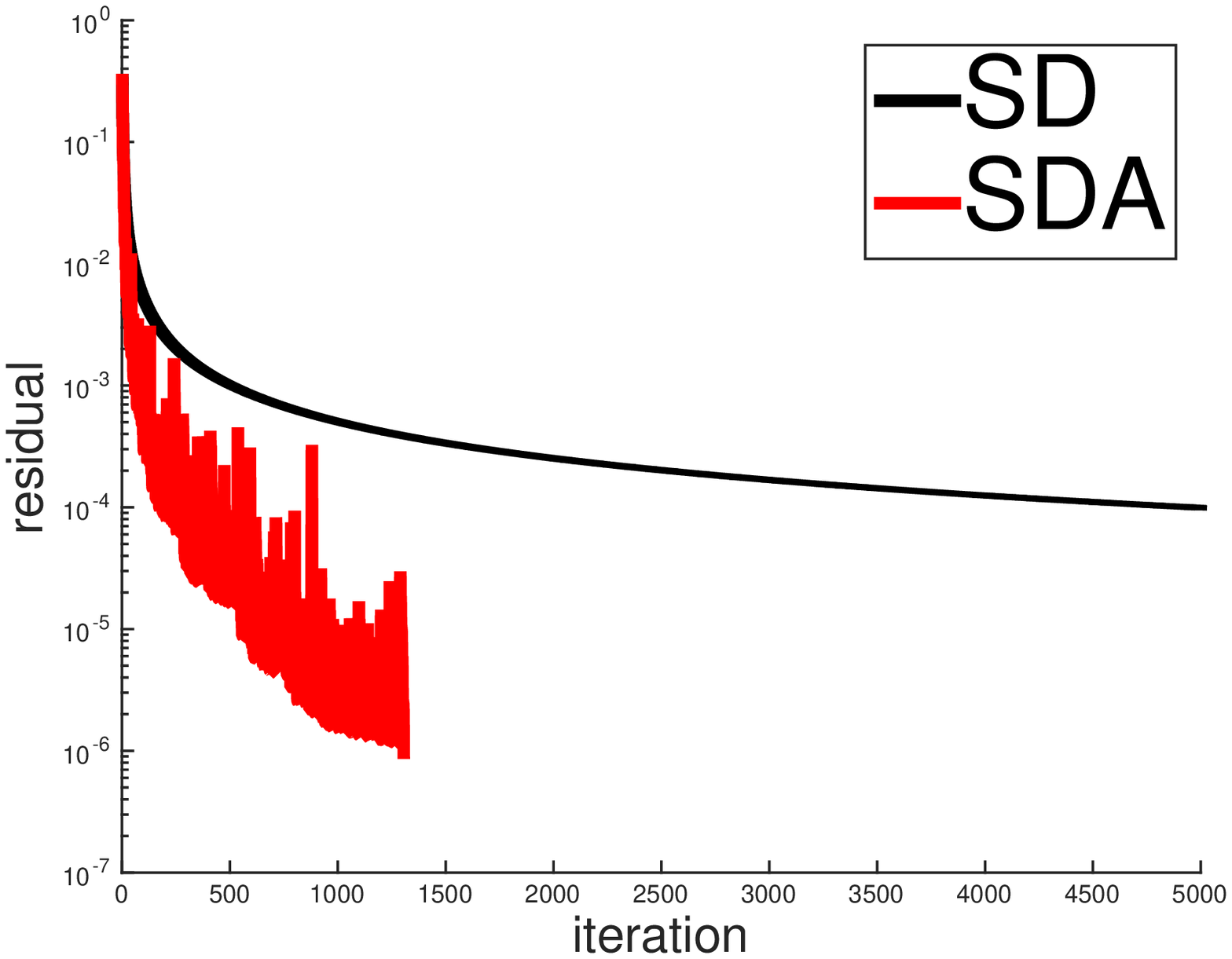}
\end{subfigure}
\begin{subfigure}{.25\textwidth}
  \centering
  \includegraphics[width=1.\linewidth]{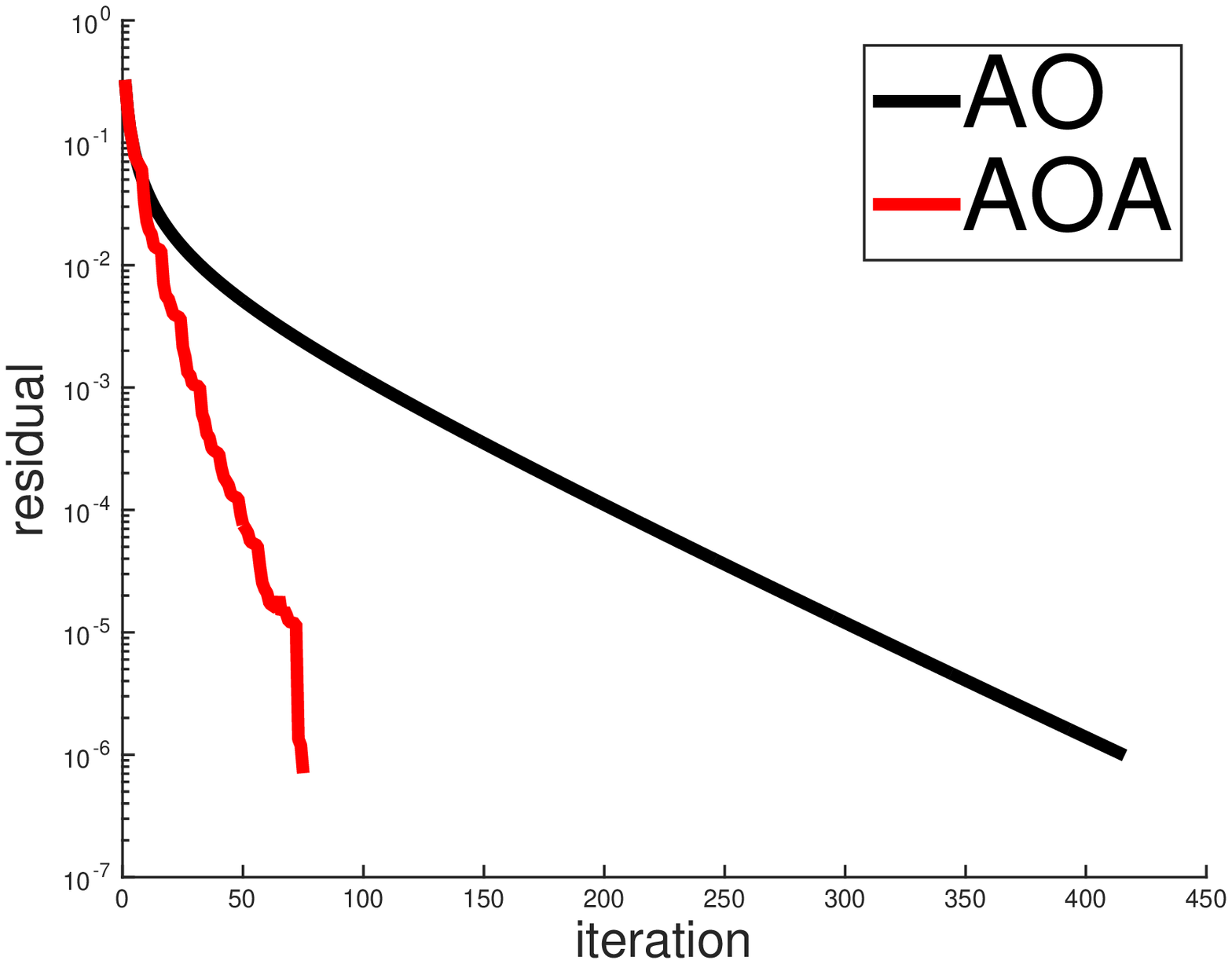}
\end{subfigure}\begin{subfigure}{.25\textwidth}
  \centering
  \includegraphics[width=1.\linewidth]{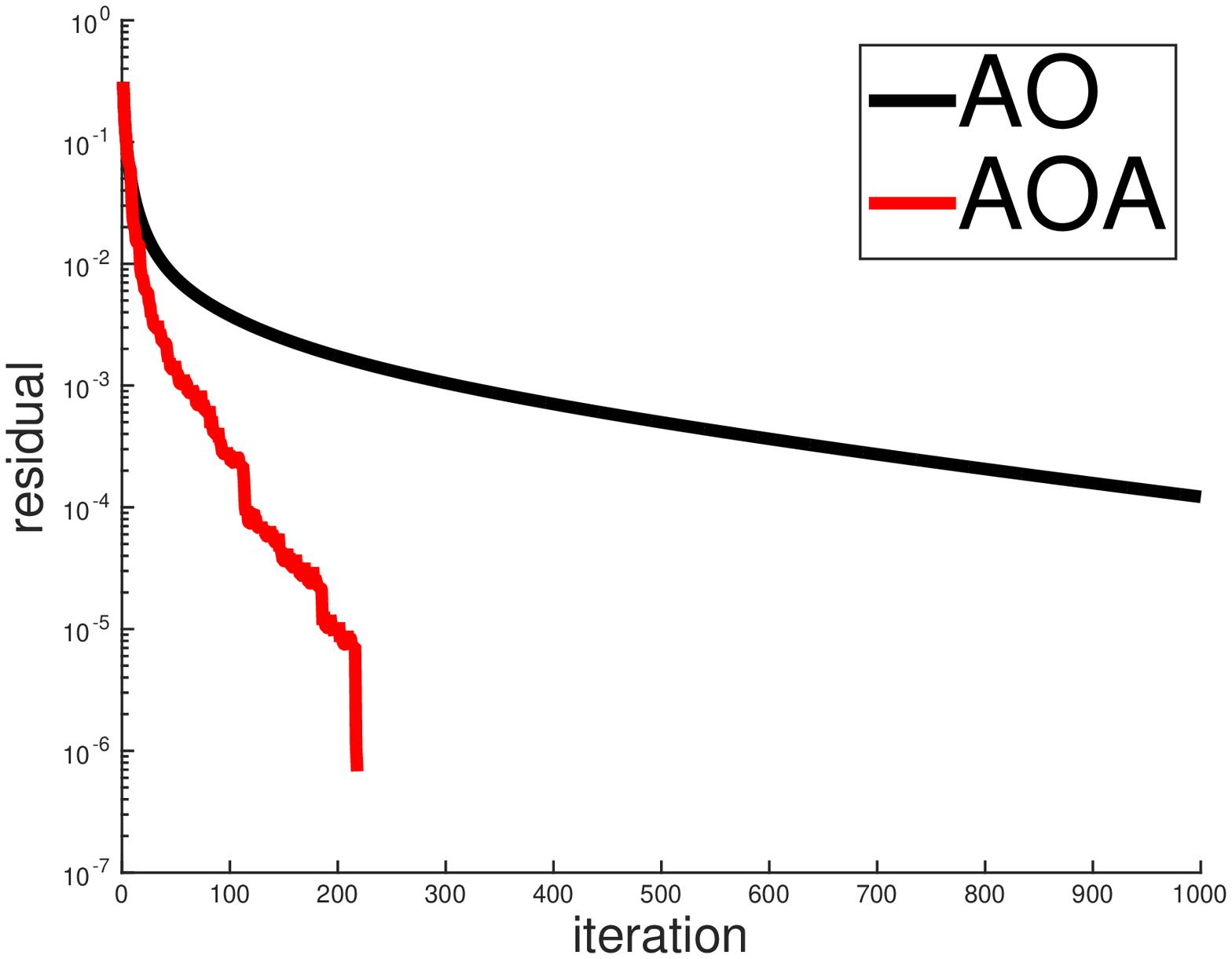}
\end{subfigure}\begin{subfigure}{.25\textwidth}
  \centering
  \includegraphics[width=1.\linewidth]{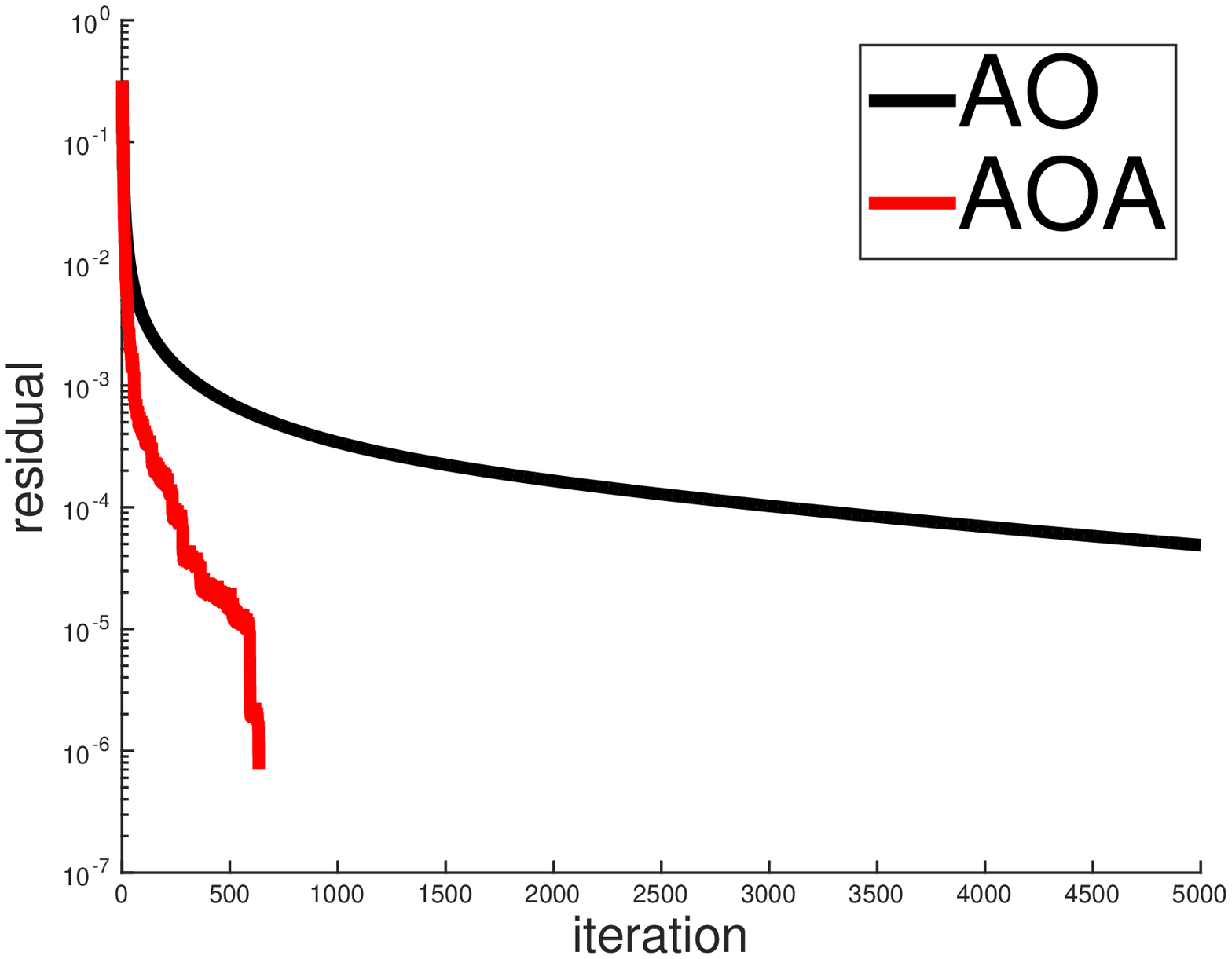}
\end{subfigure}\begin{subfigure}{.25\textwidth}
  \centering
  \includegraphics[width=1.\linewidth]{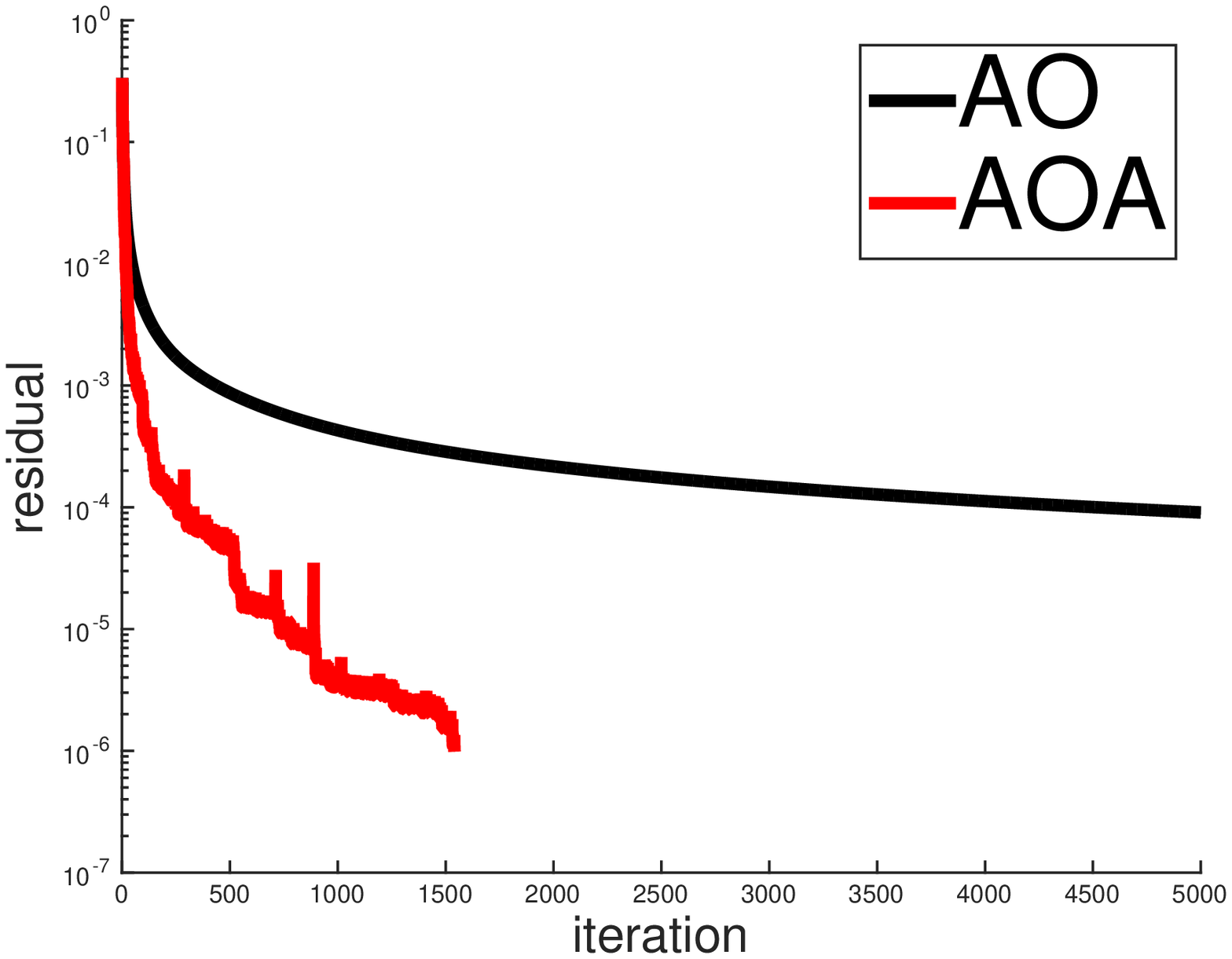}
\end{subfigure}
\begin{subfigure}{.25\textwidth}
  \centering
  \includegraphics[width=1.\linewidth]{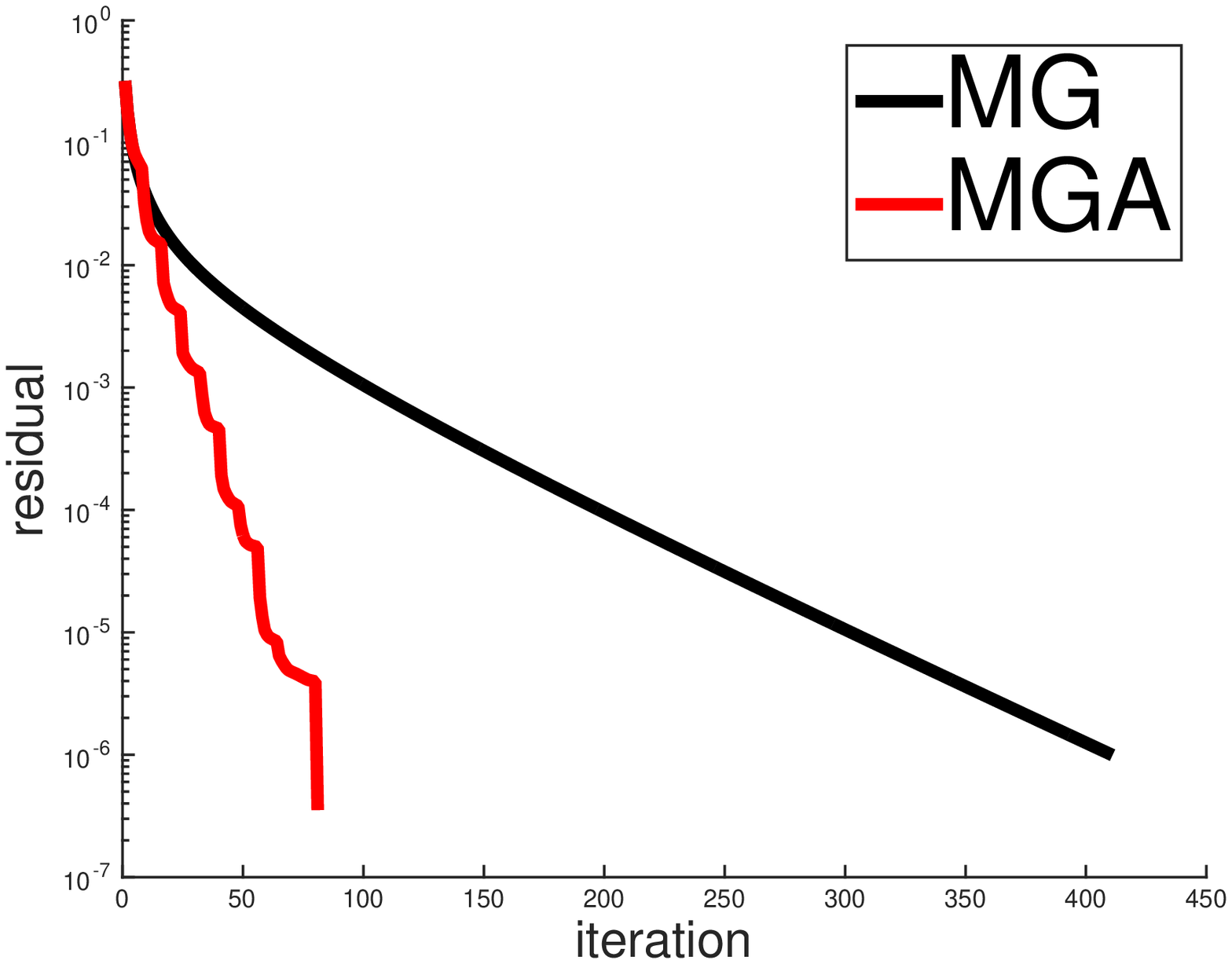}
\end{subfigure}\begin{subfigure}{.25\textwidth}
  \centering
  \includegraphics[width=1.\linewidth]{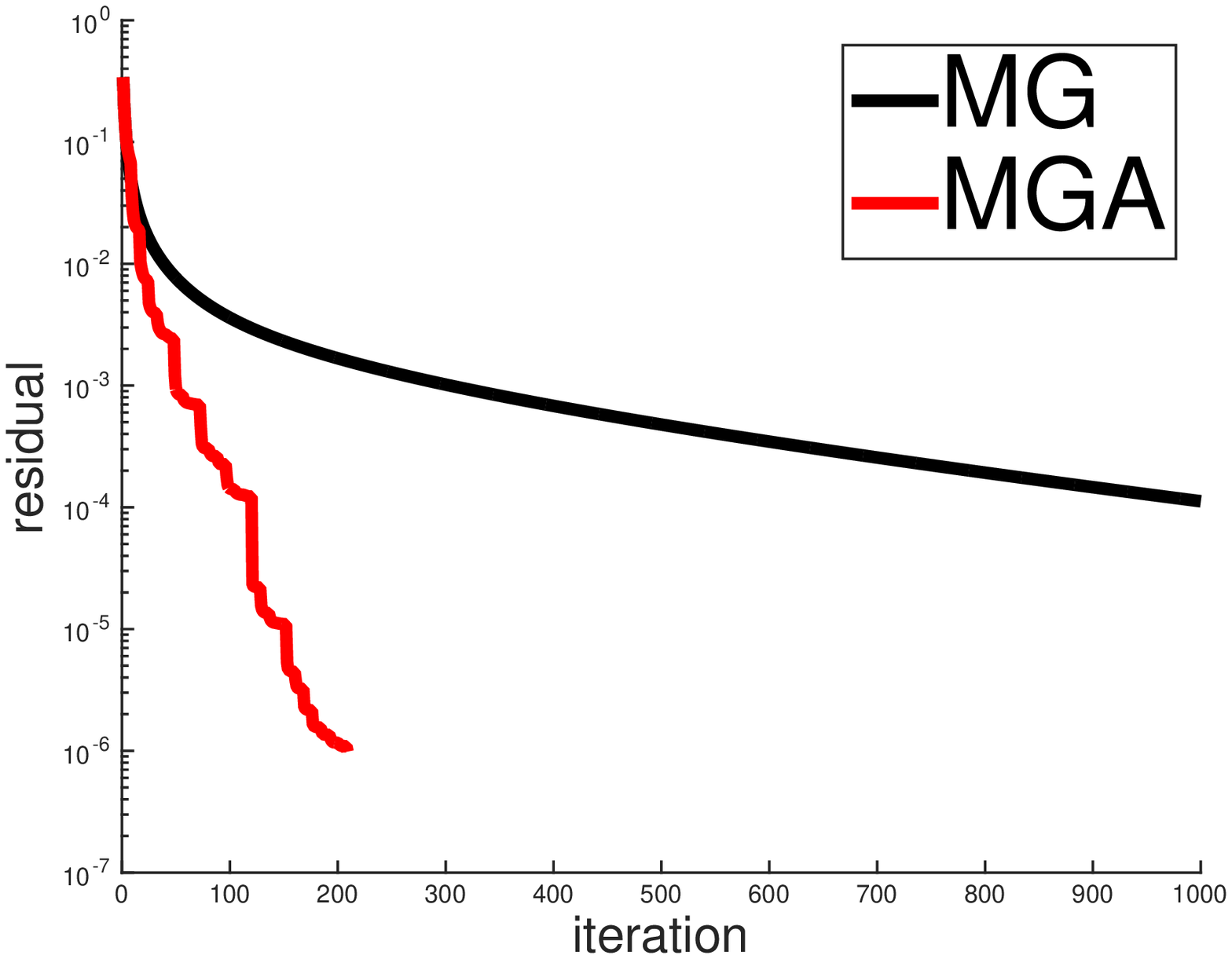}
\end{subfigure}\begin{subfigure}{.25\textwidth}
  \centering
  \includegraphics[width=1.\linewidth]{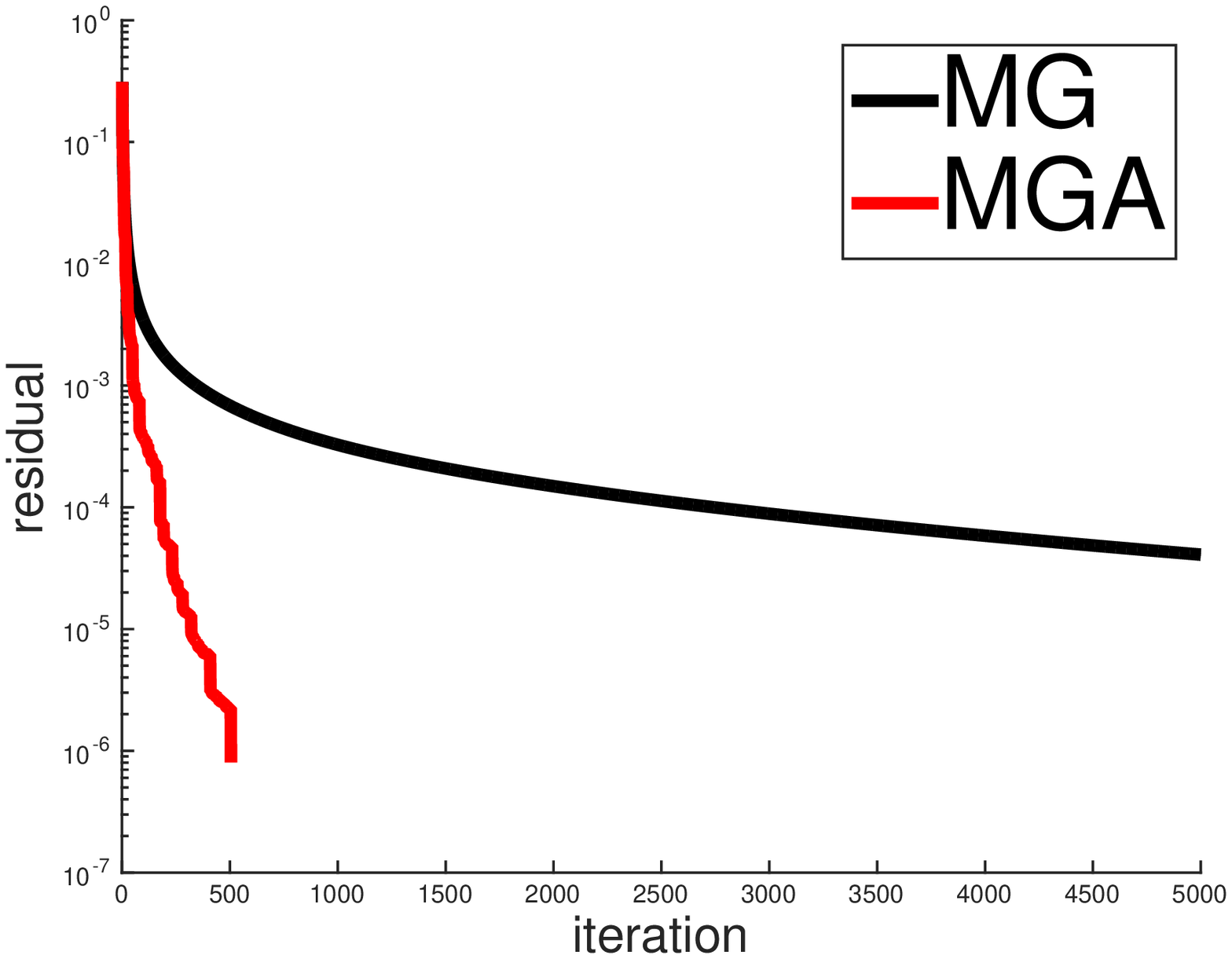}
\end{subfigure}\begin{subfigure}{.25\textwidth}
  \centering
  \includegraphics[width=1.\linewidth]{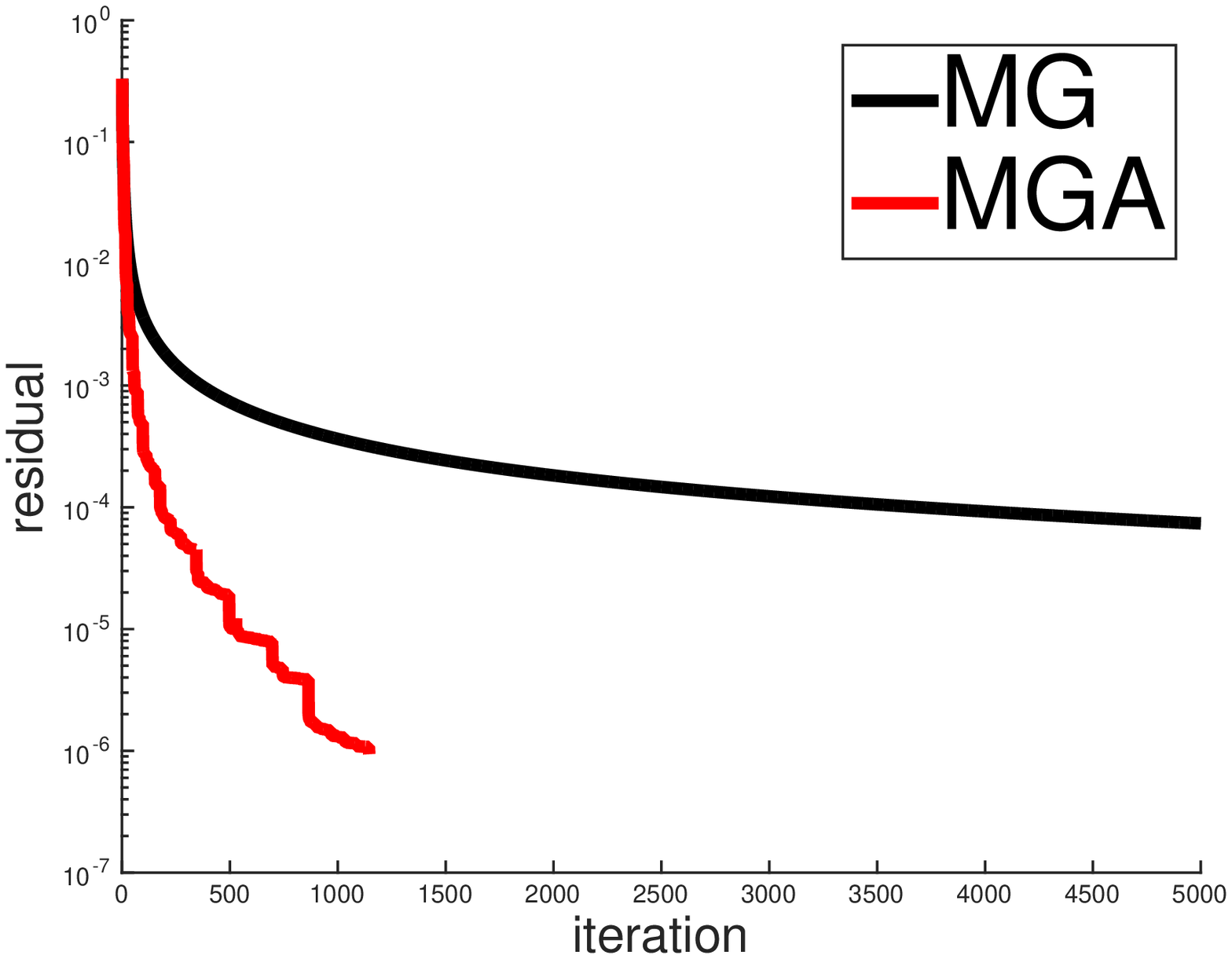}
\end{subfigure}
\caption{Top: comparison of SDA and SD. Middle: comparison of AOA and AO. Bottom: comparison of MGA and MG. Random problems are generated with $N=1000$: $\kappa=10^2$ (first), $\kappa=10^3$ (second), $\kappa=10^4$ (third), $\kappa=10^5$ (fourth).}
\label{fig:4}
\end{figure}
The problem size is chosen as $N=1000$.
Each comparison consists of four pairs of plot: $\kappa=10^2,\,10^3,\,10^4,\,10^5$, respectively.
The figures show that in all cases, the aligned methods terminate in relatively few iterations.
Further insight into the plots can be gained by observing the oscillating behavior, which reveals that SDA usually has large magnitude of oscillation, while MGA is the smoothest one.
It is known that the oscillation of a convergence curve is closely related to the numerical stability~\cite{Lamotte2002}.
In view of the convergence performance and the stability behavior for the three aligned methods, the use of the MGA step is more recommended than the SDA step.

The next experiment is a two-point boundary value problem~\cite{Friedlander1999,Dai2003b}.
The tridiagonal matrix $A$ after discretization by the finite difference method is of the form $A = \text{tridiag}(-1/h^2,\,2/h^2,\,-1/h^2)$
with $h=11/N$.
Notice that with the augmentation of matrix dimension $N$, the condition number $\kappa$ will also increase.
The purpose of this is to confirm the previous results obtained for the new methods.
Since SDA and MGA are as expected less efficient than SDC and MGC, we shall not address them again and focus on other three methods.
The AOA curve is retained for the sake of comparison.
Here, we provide results of the cases $N=10^2,\,10^3,\,10^4,\,10^5$ and illustrate the residual curves.
Fig.~\ref{fig:5} shows that MGC are quite competitive with SDC, while AOA can not beat them in all cases.
\begin{figure}[!t]
\centering
\begin{subfigure}{.5\textwidth}
  \centering
  \includegraphics[width=1.\linewidth]{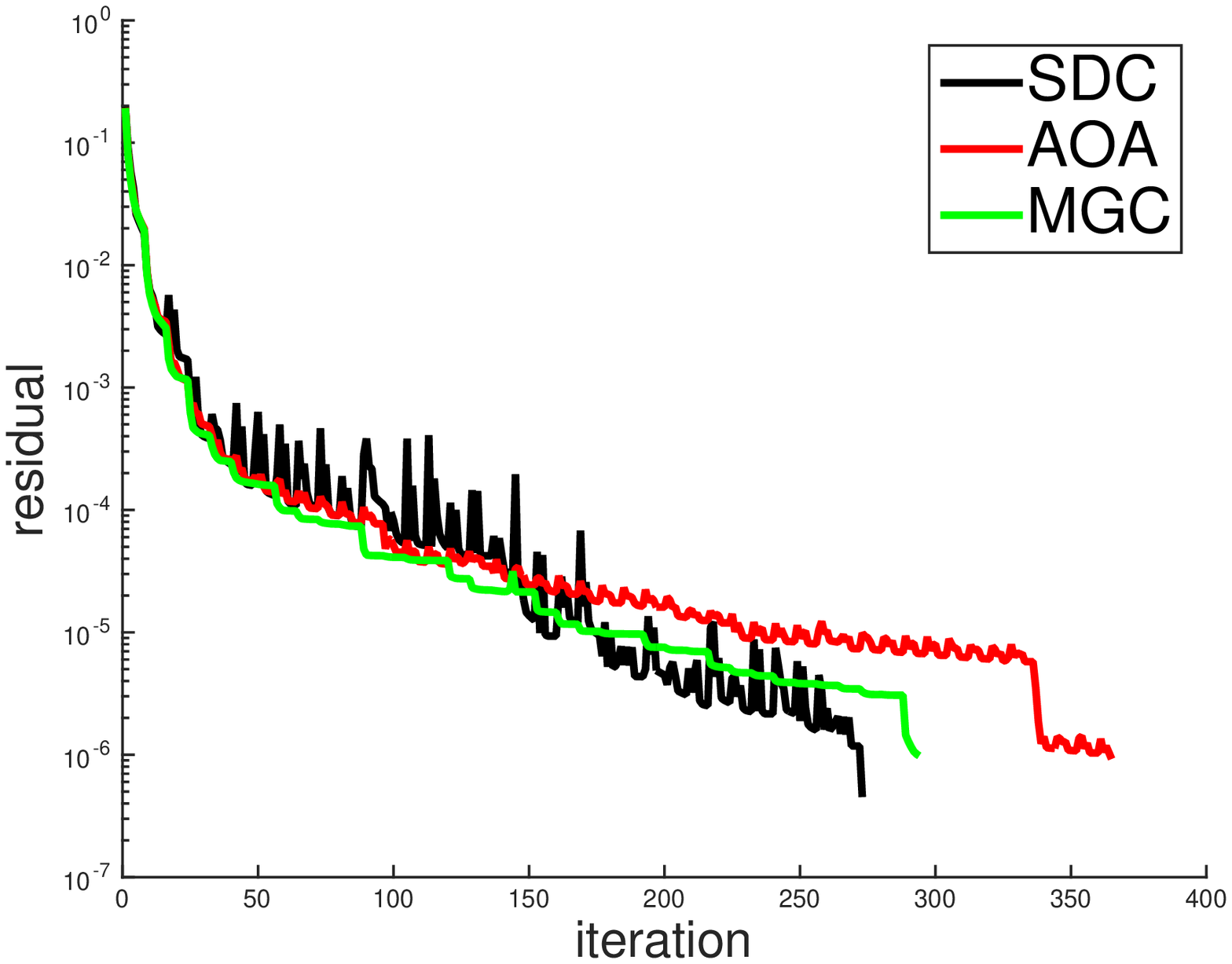}
\end{subfigure}\begin{subfigure}{.5\textwidth}
  \centering
  \includegraphics[width=1.\linewidth]{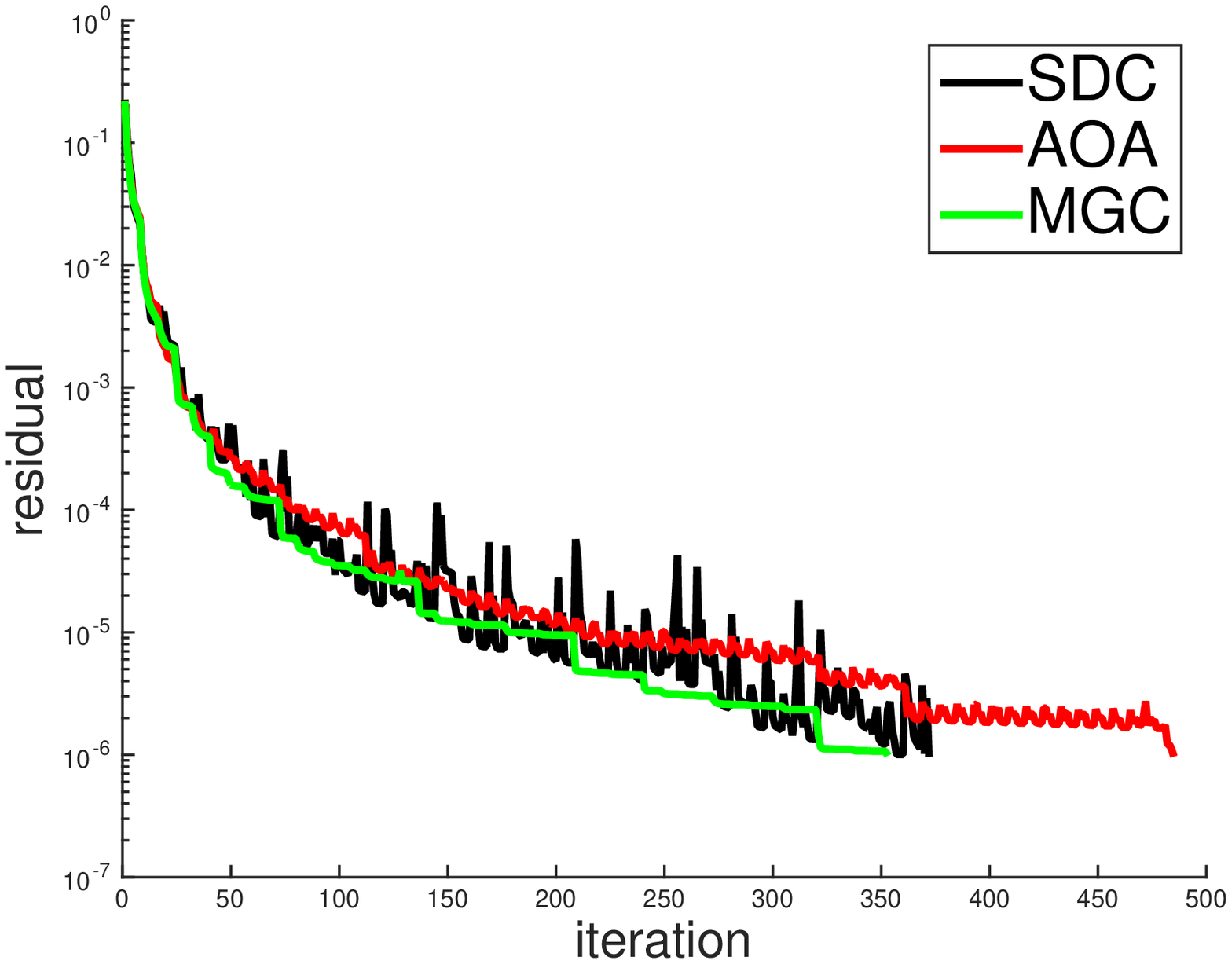}
\end{subfigure}
\begin{subfigure}{.5\textwidth}
  \centering
  \includegraphics[width=1.\linewidth]{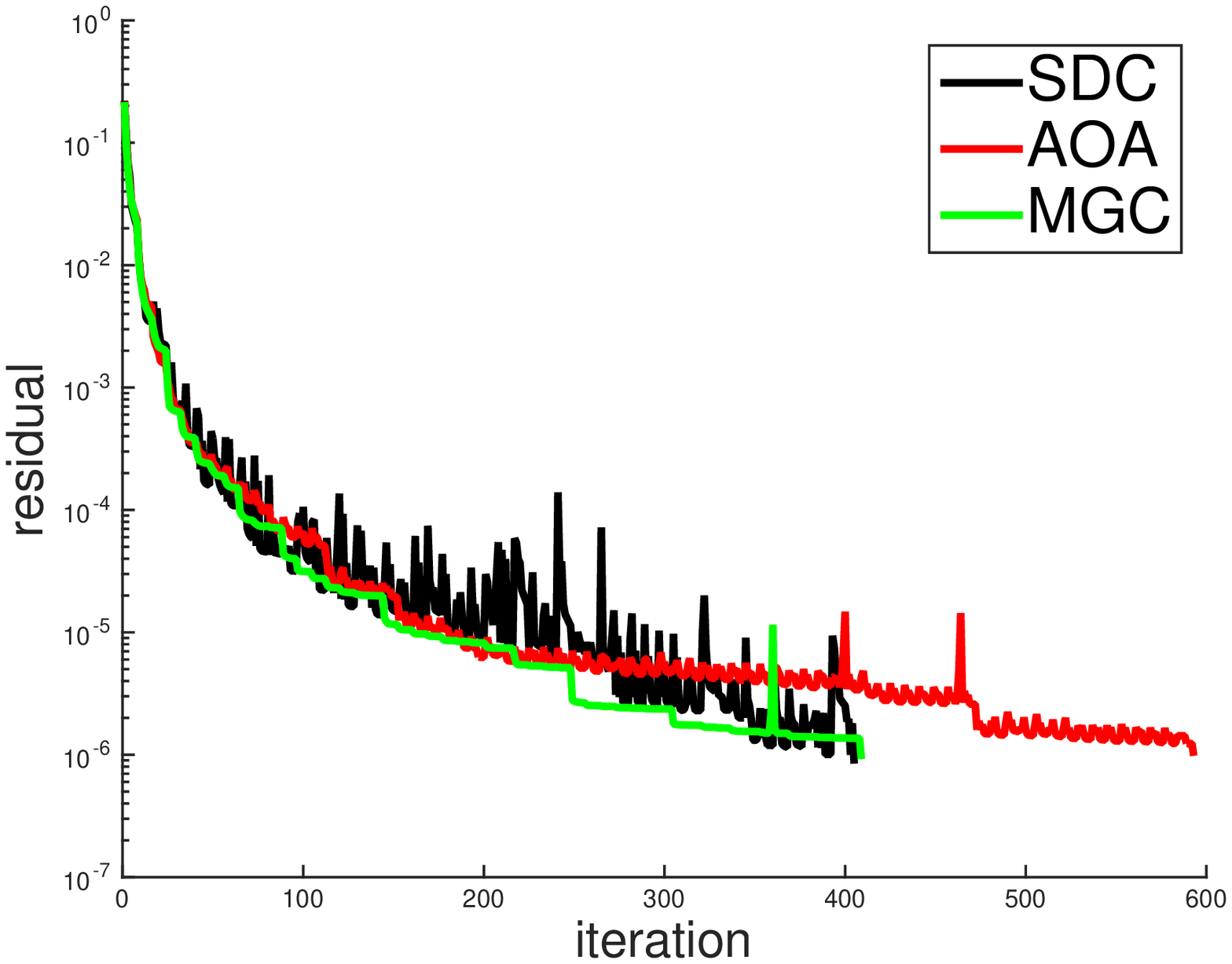}
\end{subfigure}\begin{subfigure}{.5\textwidth}
  \centering
  \includegraphics[width=1.\linewidth]{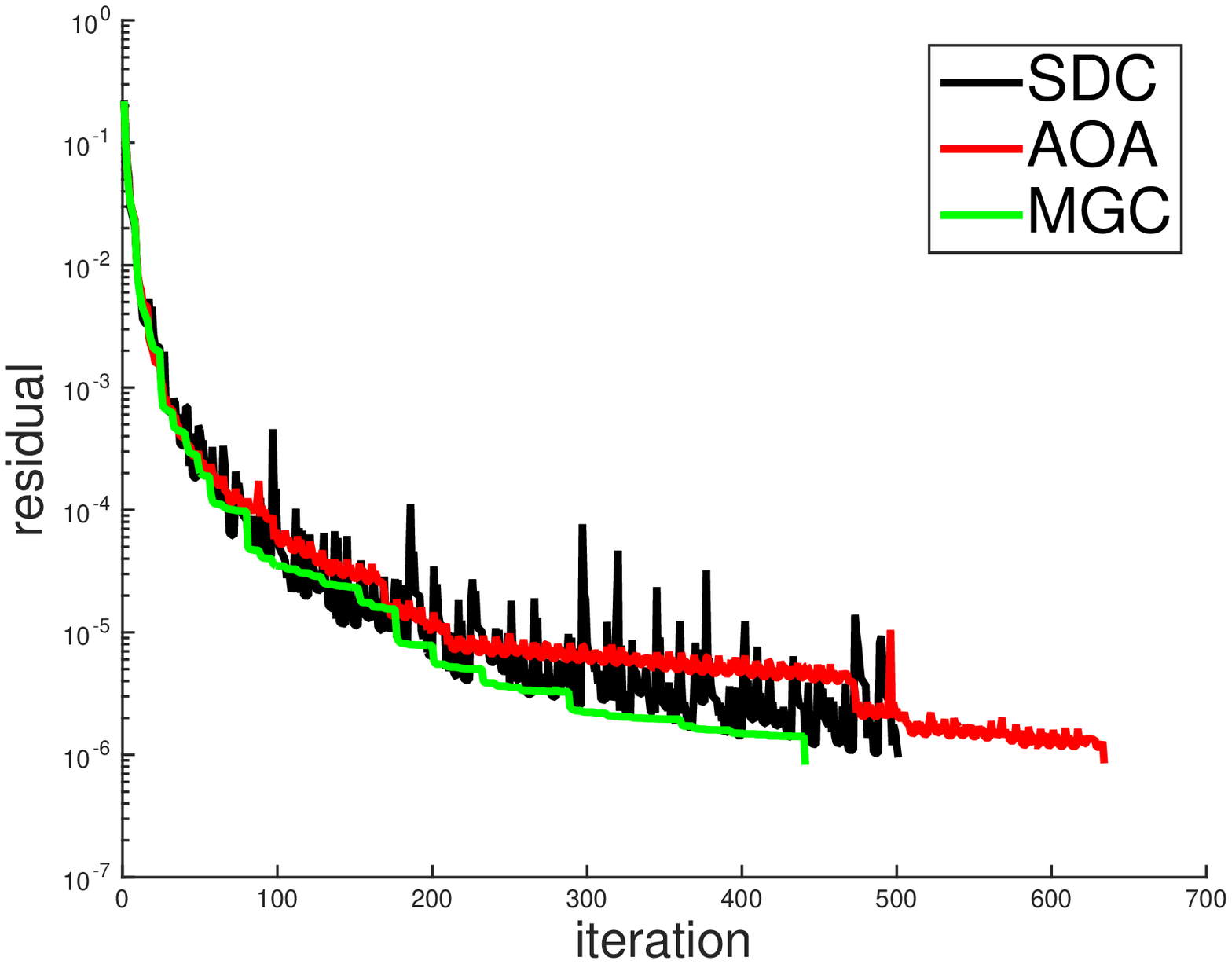}
\end{subfigure}
\caption{Comparison of different gradient methods through the two-point boundary value problems: $N=10^2$ (top-left), $N=10^3$ (top-right), $N=10^4$ (bottom-left), $N=10^5$ (bottom-right).}
\label{fig:5}
\end{figure}

Similar to the previous results, we can see that SDC oscillates mightily in all cases, while AOA is slightly better than SDC emerging from the fact that it yields smoother transitional curves between the spikes.
MGC shows the most promising performance since it gives not only a competitive convergence speed, but also a much smoother curve than other methods.
In one direction, the MG-based method minimizes indeed the residual value.
On the other hand, it is known that stability generally favors short steplengths.
Along with~\eqref{eq:rls}, the desired conclusion follows.

Finally, we compare our new methods with the conjugate gradient (CG) method~\cite{Hestenes1952}.
Two examples are used to show the robustness and efficiency of the proposed methods.
The first example concerns the random problems with perturbation generated by MATLAB, which have the following form
\[
\tilde{A}x=b,\quad \tilde{A} = A+\delta V,
\]
where $\delta$ is a small positive value and $V$ is a nonsymmetric matrix.
Still, let $\kappa$ be the condition number of $A$.
We choose $\delta=10^{-4}$.
$V$ is generated by the MATLAB function~\texttt{sprand}.
We compare also our methods with the generalized minimum residual (GMRES) method~\cite{Saad1986} in view of the perturbation.
Here we use the restarted GMRES where algorithm is restarted every $l$ iterations.
The computational results are shown in Fig.~\ref{fig:6}.
\begin{figure}[!t]
\centering
\begin{subfigure}{.5\textwidth}
  \centering
  \includegraphics[width=1.\linewidth]{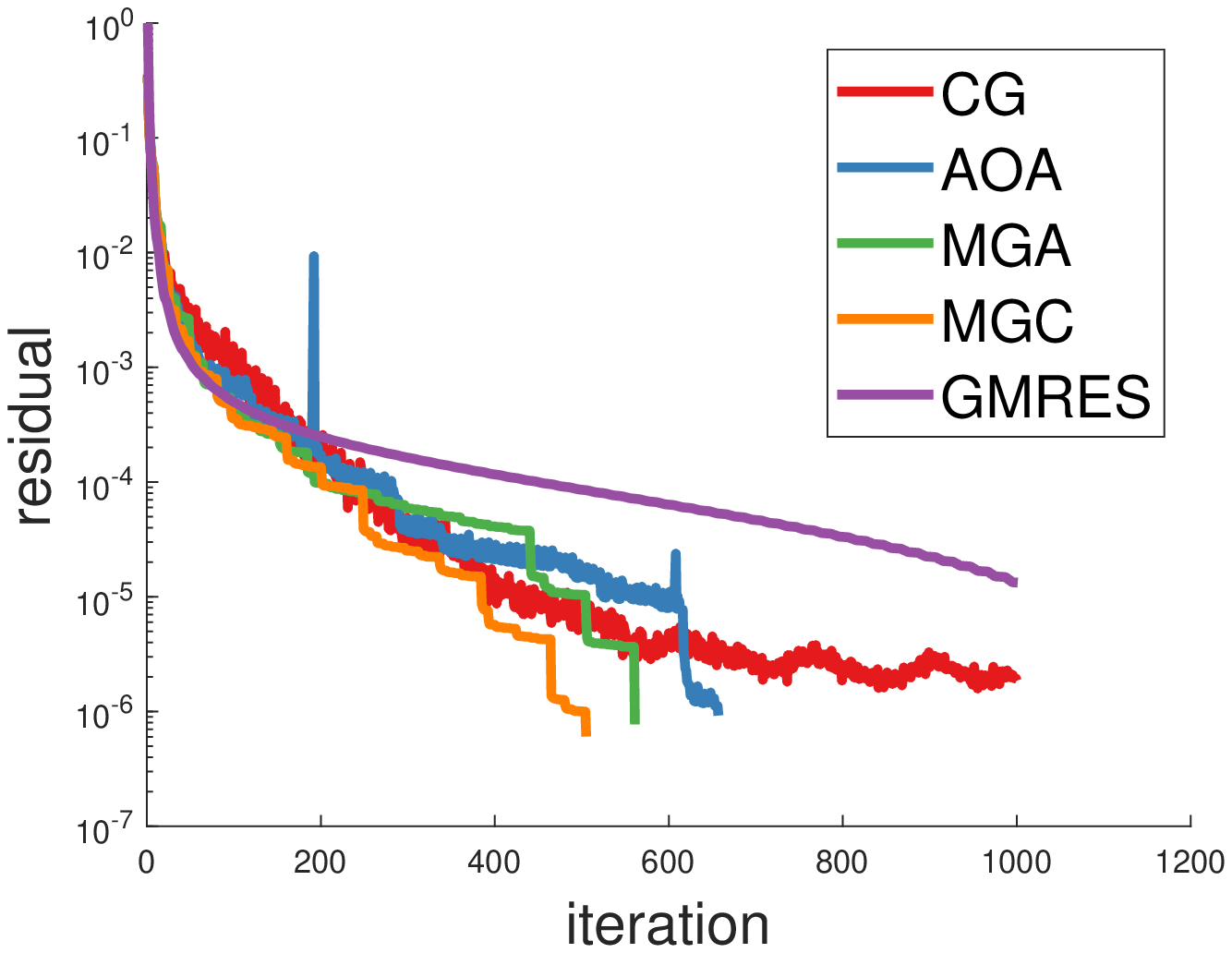}
\end{subfigure}\begin{subfigure}{.5\textwidth}
  \centering
  \includegraphics[width=1.\linewidth]{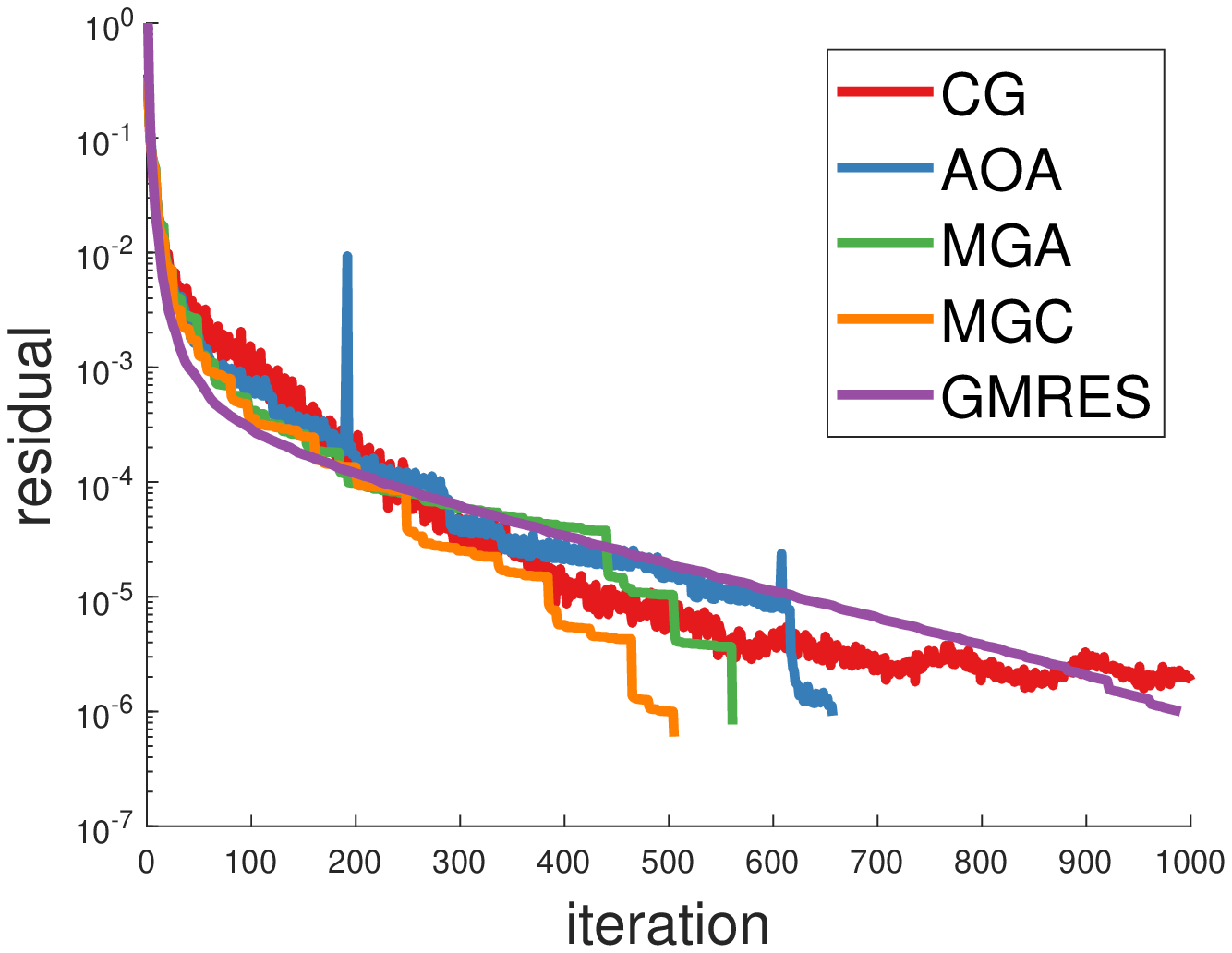}
\end{subfigure}
\begin{subfigure}{1.\textwidth}
  \centering
  \includegraphics[width=.5\linewidth]{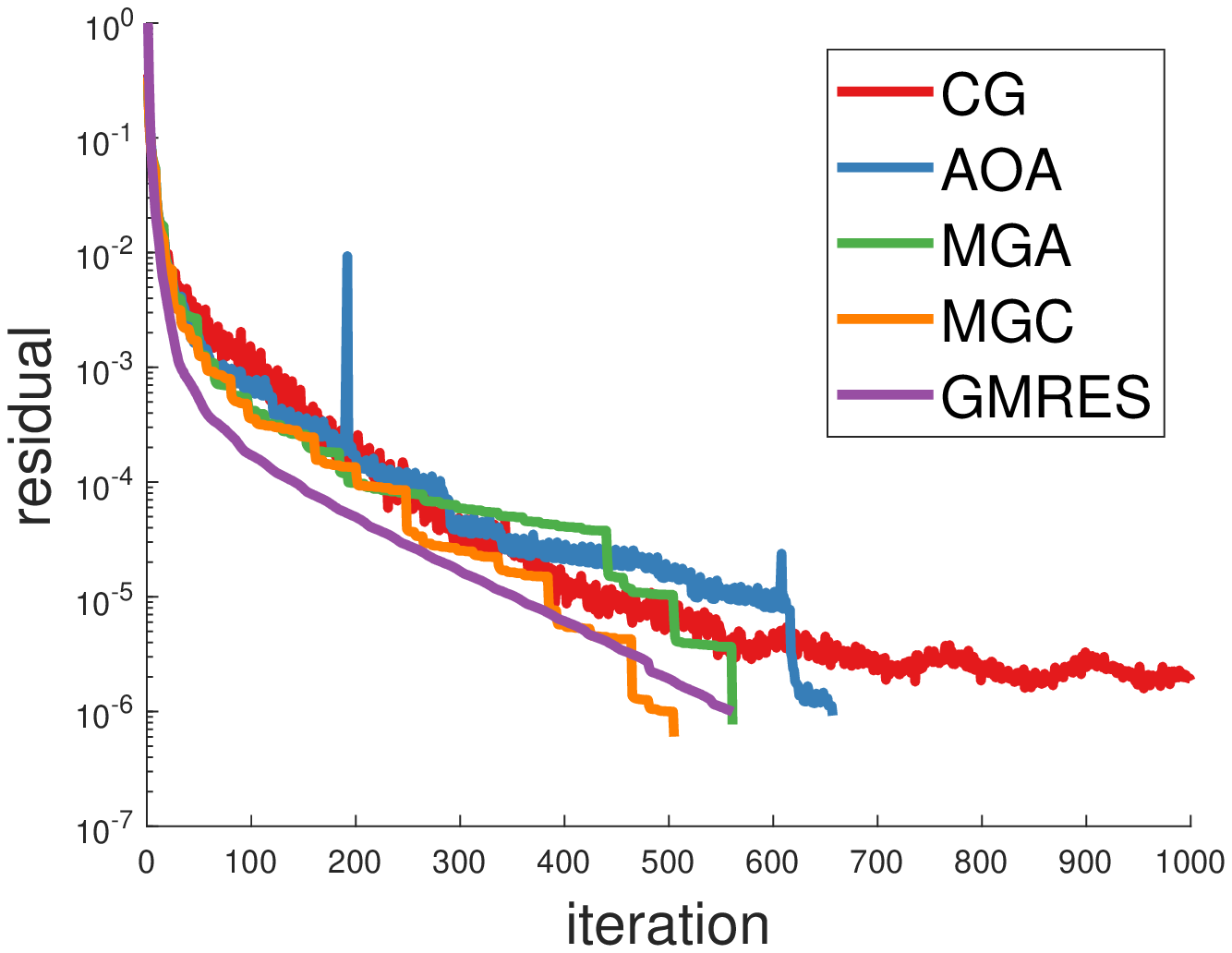}
\end{subfigure}
\caption{Comparison of the new methods with CG and restarted GMRES through random problems with perturbation where $N=10^2$ and $\kappa=10^4$. GMRES is restarted every $l$ iterations: $l=10$ (top-left), $l=20$ (top-right), $l=30$ (bottom).}
\label{fig:6}
\end{figure}
We observe that CG curve decreases in the beginning but stagnates in the end, while our new methods are robust and resistant to perturbation.
On the other hand, GMRES needs to store $l$ more vectors, which means $lN$ storage locations, and requires about $l$ more vector updates and dot products than gradient methods.
The second example is drawn from the University of Florida Sparse Matrix Collection~\cite{Davis2011} which is a large-scale system with $N=1564794$ and $\kappa=1.225\times 10^8$.
The matrix name is \texttt{Flan\_1565} with ID $2544$.
This is obtained from a 3D mechanical problem discretized by hexahedral finite elements.
The computational result is shown in Fig.~\ref{fig:7}.
\begin{figure}[!t]
\centering
\includegraphics[width=.75\linewidth]{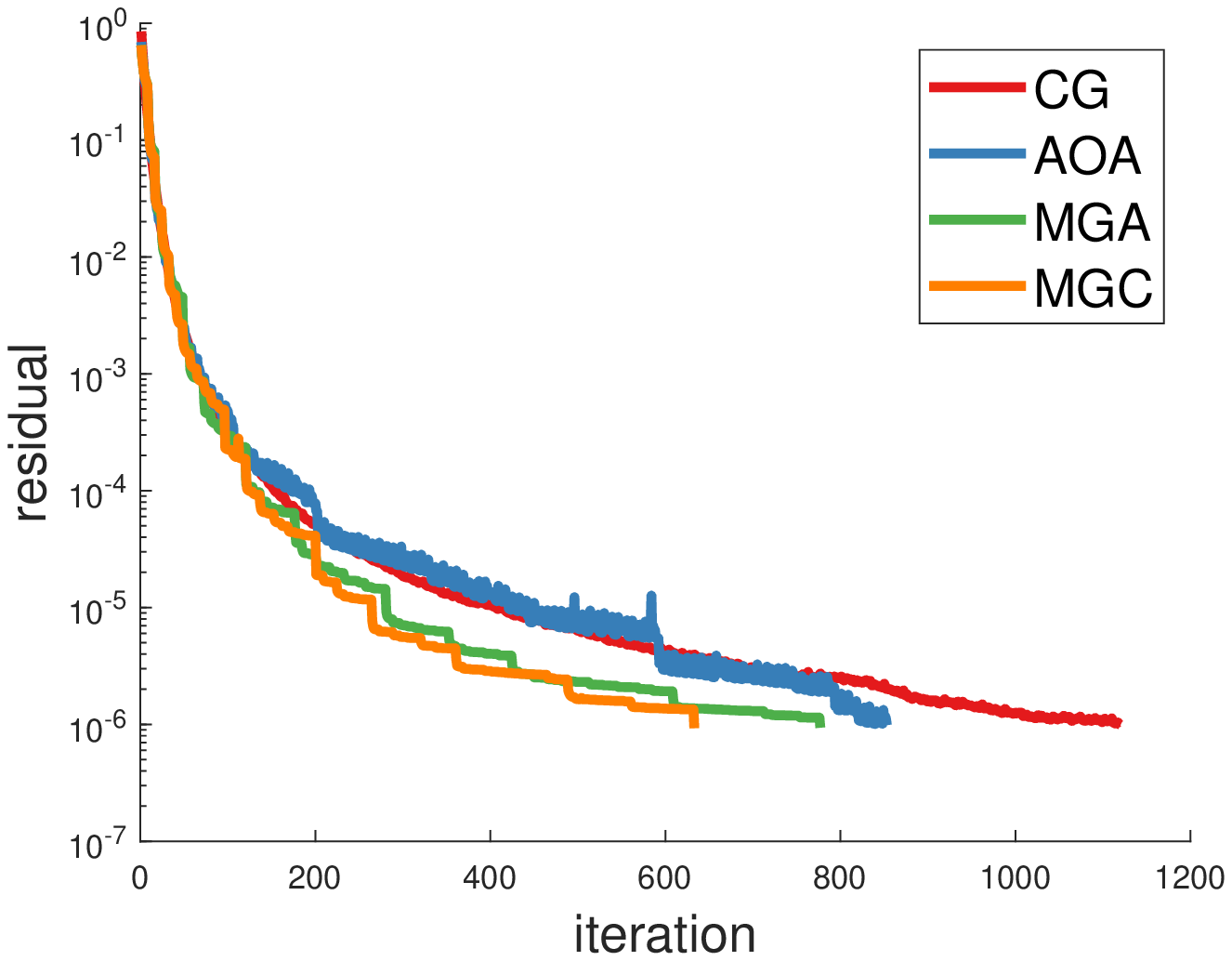}
\caption{Comparison of the new methods with conjugate gradient through a large-scale problem: $N=1564794$.}
\label{fig:7}
\end{figure}
The new methods perform better than CG in this case and the best performance is realized by MGC.

\section{Concluding remarks}
\label{sec:6}

We address first the spectral properties of the MG method.
Our analysis effectively extends that in~\cite{Nocedal2002} which includes only the SD method.
In fact, it is possible to further extend the current results based on the $P$-gradient framework as mentioned in Section~\ref{sec:2}.
We introduce here only the MG-based properties since it is the most promising candidate for a further formulation.
Additionally, our analysis shows that the Cauchy step is not an indispensable component to trigger the alignment behavior.
The Cauchy-short framework proposed in~\cite{Gonzaga2016} could thus be updated and generalized to our cases.

In this paper, we propose three new gradient methods with alignment, called AOA, MGA and MGC, respectively.
MGC shows great competitiveness to SDC, while SDA, AOA and MGA have been proved to be less efficient than other methods in most cases.
A closer examination of AOA and MGC reveals that they are more stable than SDC.
Such feature may contribute to the problem of loss of precision~\cite{Lamotte2002}.
The new methods with alignment present several advantages over the Krylov subspace methods.

There exist two main heuristics to accelerate the gradient methods.
One is to reveal the spectral property, which yields eventually the alignment methods; the other depends on the ``decreasing together'' behavior as presented in~\cite{Dai2005c}.
For example, BB and DY both possess the second feature.
According to our experiments, the former seems to be more effective than the latter.
Further investigation of different heuristics seems to be a good research topic in the future.

\section*{Acknowledgements}
This work was supported by the French national programme LEFE/INSU and the project ADOM (M\'ethodes
de d\'ecomposition de domaine asynchrones) of the French National Research Agency (ANR).

\bibliography{ref}
\bibliographystyle{abbrv}

\end{document}